\newcommand{\wrt}{with respect to}
\newcommand{\ZZ}{\mathbb{Z} }
\newcommand{\D}{\mathcal{D} }
\newcommand{\T}{\mathcal{T} }
\newcommand{\F}{\mathcal{F} }
\newcommand{\E}{\mathcal{E} }
\newcommand{\G}{\mathcal{G} }
\newcommand{\bX}{\mathbf{X} }
\newcommand{\bY}{\mathbf{Y} }
\newcommand{\N}{\mathcal{N} }
\newcommand{\n}{\mathfrak{n} }
\newcommand{\m}{\mathfrak{m} }
\newcommand{\M}{\mathcal{M} }
\newcommand{\q}{\mathfrak{q} }
\newcommand{\R}{\mathcal{R} }
\newcommand{\Sc}{\mathcal{S} }
\newcommand{\rt}{\rightarrow}
\newcommand{\ov}{\overline}
\newcommand{\FF}{\mathcal{T}}
\newcommand{\bP}{\mathbb{\partial}}
\newcommand{\Supp}{\operatorname{Supp}}
\newcommand{\injdim}{\operatorname{injdim}}
\newcommand{\gr}{\operatorname{gr}}
\newcommand{\content}{\operatorname{content}}
\newcommand{\height}{\operatorname{height}}
\newcommand{\Spec}{\operatorname{Spec}}
\newcommand{\coker}{\operatorname{coker}}
\newcommand{\Ass}{\operatorname{Ass}}
\newcommand{\Hom}{\operatorname{Hom}}
\newcommand{\Ext}{\operatorname{Ext}}
\theoremstyle{plain}
\newtheorem{theorem}{Theorem}[section]
\newtheorem{corollary}[theorem]{Corollary}
\newtheorem{lemma}[theorem]{Lemma}
\newtheorem{proposition}[theorem]{Proposition}
\newtheorem{question}[theorem]{Question}
\theoremstyle{definition}
\newtheorem{definition}[theorem]{Definition}
\newtheorem{remark}[theorem]{Remark}
\newtheorem{property}[theorem]{Property}
\newtheorem{example}[theorem]{Example}
\theoremstyle{remark}
\begin{document}

\title{Graded components of local cohomology modules }
\author{Tony.~J.~Puthenpurakal}
\date{\today}
\address{Department of Mathematics, IIT Bombay, Powai, Mumbai 400 076}

\email{tputhen@math.iitb.ac.in}
\subjclass{Primary 13 D45, 14 B15,  Secondary 13 N10, 32C36}
\keywords{local cohomology, graded local cohomology, ring of differential operators, Weyl Algebra, De Rham (and Koszul)
cohomology of holonomic modules over the Weyl algebra}
 \begin{abstract}
Let $A$ be a regular ring containing a field of characteristic zero and let $R = A[X_1,\ldots, X_m]$. Consider $R$ as standard
graded with $\deg A = 0$ and $\deg X_i = 1$ for all $i$. In this paper we present a comprehensive study of graded components of
local cohomology modules
$H^i_I(R)$ where $I$ is an \emph{arbitrary} homogeneous ideal in $R$. Our study seems to be the first in this regard. 
\end{abstract}
\dedicatory{Dedicated to Prof. Gennady Lyubeznik}
 \maketitle
 \tableofcontents
\newpage
\section{Introduction}
Let $S = \bigoplus_{n \geq 0}S_n$ be a standard graded Noetherian ring and let $S_+$ be it's irrelevant ideal. The theory of local cohomology with respect to $S_+$ is particularly satisfactory. It is well-known (cf. \cite[15.1.5]{BS}) that if $M$ is a finitely generated 
graded $S$-module then  for all $i \geq 0$
\begin{enumerate}
\item
$H^i_{S_+}(M)_n$ is a finitely generated $S_0$-module for all $n \in \ZZ$.
\item
$H^i_{S_+}(M)_n = 0$ for all $n \gg 0$.
\end{enumerate}
It is natural to expect whether local cohomology with respect to other homogeneous 
ideals exhibit similar results (or predictable results). It has been well-known for many years that the answer to the latter question is clearly in the negative
(even in the case when $S = B[X_1,\ldots, X_n]$ is a polynomial ring). Even in the case of $S_+$ the
local cohomology module $H^i_{S_+}(M)$ need not be \emph{tame}, i.e., $H^i_{S_+}(M)_n \neq 0$ for infinitely many $n < 0$
does \emph{not} 
imply that $H^i_{S_+}(M)_n \neq 0$ for all $n \ll 0$, see \cite[2.2]{CH}. 

The purpose of this paper is to show that if $A$ is a regular ring containing a field of characteristic zero  and 
if $R = A[X_1,\ldots, X_n]$ is standard graded ( with $\deg A = 0$) then the theory of local cohomology of
$R$ with respect to \emph{arbitrary} homogeneous ideals of $R$ exhibit striking good behavior.
We should note that local cohomology modules over regular rings does indeed show good behavior.
For instance see the remarkable papers \cite{HuSh}, \cite{Lyu-1} and \cite{Lyu-2}. However
there has been no previous study of graded components of graded local cohomology modules of polynomial rings over regular rings.

\s \label{std} \emph{Standard Assumption:} From henceforth $A$ will denote a regular ring containing a field of characteristic zero. Let $R = A[X_1,\ldots, X_m]$ be standard graded with $\deg A = 0$ and $\deg X_i = 1$ for all $i$. We also assume $m \geq 1$. Let $I$ be a homogeneous ideal in $R$. Set $M = H^i_I(R)$. It is well-known that $M$ is a graded $R$-module. Set $M = \bigoplus_{n \in \ZZ}M_n$.

We first give a summary of the results proved in this paper.

 \textbf{I:} \textit{(Vanishing:)}  The first result we prove is that vanishing of almost all graded components of $M$ implies vanishing of $M$. More precisely we show
 \begin{theorem}\label{vanish}
(with hypotheses as in \ref{std}).  If $M _n = 0$ for all $|n|  \gg 0$ then  
$M = 0$.
 \end{theorem}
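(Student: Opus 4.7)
The plan is to argue by contradiction, producing, if $M \neq 0$, a nonzero finite-dimensional module over the Weyl algebra $A_m(k) = k\langle X_1,\ldots,X_m,\partial_1,\ldots,\partial_m\rangle$ for a suitable field $k$ of characteristic zero. This contradicts the classical fact that $A_m(k)$ admits no nonzero finite-dimensional module: the relation $[\partial_i,X_i]=1$ has trace zero on a finite-dimensional space, which is impossible when $k$ has characteristic zero.

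First I would record the structure that does the work. The module $M = H^i_I(R)$ carries a natural action of the Weyl algebra $D = A\langle X_1,\ldots,X_m,\partial_1,\ldots,\partial_m\rangle$: each $\partial_i = \partial/\partial X_i$ acts on every localization $R_f$ appearing in the \v{C}ech complex that computes $H^i_I(R)$, commutes with its differentials, and so descends to cohomology. With the grading conventions $\deg X_i = 1$, $\deg \partial_i = -1$, and $\deg A = 0$, one has $\partial_i \colon M_n \to M_{n-1}$ for every $n \in \ZZ$ and every $i$.

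Assume $M \neq 0$. Using $M_n = 0$ for $n \ll 0$, set $n_0 := \min\{n \in \ZZ : M_n \neq 0\}$ and pick $0 \neq m \in M_{n_0}$. Then $\partial_i m \in M_{n_0-1} = 0$ for all $i$, and since each $\partial_i$ is an $A$-linear derivation on $R$, we get $\partial_i(rm) = (\partial_i r)\, m \in Rm$ for every $r \in R$. Thus the cyclic $R$-submodule $N := Rm \subseteq M$ is already a $D$-submodule. Using $M_n = 0$ for $n \gg 0$, $N$ is concentrated in finitely many graded components $N_{n_0+d} = R_d \cdot m$, and each of these is finitely generated over $A$ because $R_d$ is; hence $N$ is a nonzero, finitely generated $A$-module.

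To finish, since $N$ is a nonzero finitely generated $A$-module, there exists a maximal ideal $\mathfrak{q}$ of $A$ with $N/\mathfrak{q}N \neq 0$ (apply Nakayama at any maximal ideal containing $\ann_A N$). Because $[\partial_i,a] = 0$ for every $a \in A$, the submodule $\mathfrak{q} N$ is $D$-stable, so $N/\mathfrak{q}N$ inherits a module structure over $D/\mathfrak{q}D = A_m(k)$ with $k := A/\mathfrak{q}$ a field of characteristic zero (as $A \supseteq \mathbb{Q}$). Since $N$ is finitely generated over $A$, $N/\mathfrak{q}N$ is finite-dimensional over $k$, contradicting the Weyl-algebra fact recalled at the outset; hence $M = 0$. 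The main obstacle I anticipate is notational rather than deep: pinning down the graded $D$-module structure on $H^i_I(R)$ precisely and verifying that $\mathfrak{q}N$ is $D$-stable. Once both are in place, the chain $M \rightsquigarrow N \rightsquigarrow N/\mathfrak{q}N$ yields the contradiction directly.
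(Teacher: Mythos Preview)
Your proof is correct and takes a genuinely different, more elementary route than the paper.

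The paper's proof (Theorem~\ref{vanish-general}) proceeds by localizing $A$ at a minimal prime of a nonzero $M_c$, completing to a power series ring $K[[Y_1,\ldots,Y_g]]$, invoking the full machinery of graded holonomic generalized Eulerian $\mathcal{D}_m$-modules (Theorem~\ref{mom}), and then taking Koszul homology in the $Y$-variables to reduce to the case $A=K$. That case (Theorem~\ref{Weyl}) is handled by an induction on $m$ using de~Rham/Koszul homology of generalized Eulerian $A_m(K)$-modules, ultimately resting on the same fact you use: $A_m(K)$ has no nonzero finite-dimensional module.

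Your argument bypasses almost all of this. You only use the Weyl algebra $A_m(A)$ acting via $\partial/\partial X_i$ (so $A$ is central), the elementary observation that a lowest-degree element is killed by every $\partial_i$ and hence generates a $D$-stable cyclic $R$-submodule, and a single reduction modulo a maximal ideal of $A$. This never touches holonomicity, the Eulerian condition, completion, or Koszul homology, and in fact does not use that $A$ is regular. What the paper's heavier approach buys is a framework that simultaneously yields the tameness, rigidity, and Bass-number results later in the paper; your shortcut is specific to the vanishing statement but reaches it with far less overhead.
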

 
\textbf{II} (\textit{Tameness:}) In view of Theorem \ref{vanish}, it follows that if
$M = H^i_I(R) = \bigoplus_{n \in \ZZ}M_n $ is \emph{non-zero} then either $M_n \neq 0$ for infinitely  many $n \ll 0$, OR, $M_n \neq 0$ for infinitely  many $n \gg 0$. We show that $M$ is \emph{tame}. More precisely 
\begin{theorem}\label{tame}
(with hypotheses as in \ref{std}).  Then we have
\begin{enumerate}[\rm (a)]
\item
The following assertions are equivalent:
\begin{enumerate}[\rm(i)]
\item
$M_n \neq 0$ for infinitely many $n \ll 0$.
\item
$M_n \neq 0$ for all $n \leq -m$.
\end{enumerate}
\item
The following assertions are equivalent:
\begin{enumerate}[\rm(i)]
\item
$M_n \neq 0$ for infinitely many $n \gg 0$.
\item
$M_n \neq 0$ for all $n \geq 0$.
\end{enumerate}
\end{enumerate}
\end{theorem}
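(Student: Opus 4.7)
The plan is to leverage the natural action of the Weyl algebra $A_m(A) = A\langle X_1,\ldots,X_m,\partial_1,\ldots,\partial_m\rangle$ on $M = H^i_I(R)$, and in particular to diagonalize the Euler operator $\E = \sum_{i=1}^m X_i \partial_i$ on the graded pieces of $M$. Each $\partial_i = \partial/\partial X_i$ is an $A$-linear derivation of $R$ of degree $-1$, and any derivation of $R$ extends canonically to a degree $-1$ additive endomorphism of $H^i_I(R)$ through the quotient rule $\partial_i(r/f^k) = (\partial_i r)/f^k - k r (\partial_i f)/f^{k+1}$ applied to a graded \v{C}ech complex for $I$. This equips $M$ with the structure of a graded $A_m(A)$-module. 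A direct computation on $r/f^k \in R_f$, with $f$ homogeneous of degree $d$ and $r$ homogeneous of degree $e$, yields $\E(r/f^k) = (e - kd)(r/f^k)$, so $\E$ acts as multiplication by $n$ on $M_n$. The characteristic zero hypothesis enters precisely here, ensuring that nonzero integers are invertible in $A$.

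From the identity $\E y = ny$ on $M_n$ I would extract two propagation identities. For $n \neq 0$, each $y \in M_n$ equals $(1/n)\sum_i X_i(\partial_i y)$, so $M_n = \sum_{i=1}^m X_i M_{n-1}$. Since $[\partial_i, X_i] = 1$, summing gives $\sum_i \partial_i X_i = \E + m$, which acts as $n + m$ on $M_n$; thus for $n \neq -m$, each $y \in M_n$ equals $(1/(n+m))\sum_i \partial_i(X_i y)$, so $M_n = \sum_{i=1}^m \partial_i M_{n+1}$. In particular, $M_{n-1} = 0$ forces $M_n = 0$ whenever $n \neq 0$, and $M_{n+1} = 0$ forces $M_n = 0$ whenever $n \neq -m$.

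With these propagation identities in hand, both parts of the theorem reduce to short inductions. For (b), assume (i) holds but $M_{n_0} = 0$ for some $n_0 \geq 0$; since $n_0 + k \neq 0$ for every $k \geq 1$, induction on $k$ via the first identity gives $M_{n_0+k} = 0$ for all $k \geq 0$, contradicting (i). For (a), assume (i) holds but $M_{n_0} = 0$ for some $n_0 \leq -m$; since $n_0 - k \leq -m - k < -m$ for every $k \geq 1$, the second identity yields $M_{n_0-k} = 0$ for all $k \geq 0$ by induction, again contradicting (i). The implications (ii)$\Rightarrow$(i) are immediate because $m \geq 1$, and the asymmetric cutoff values $-m$ and $0$ are exactly the two exceptional values of $n$ where the Euler identities fail. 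The one step that I expect to demand real care is the construction of the graded $A_m(A)$-module structure on $H^i_I(R)$ when $A$ is merely regular rather than a field; once this is in place, the rest of the argument is a direct manipulation of the Euler identity.
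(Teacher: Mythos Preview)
Your proof is correct and considerably more direct than the paper's. The paper does not argue via the Euler identity on $M$ itself; instead it first reduces to the case where $A$ is a field by localizing at a minimal prime of a nonzero component, completing, and then passing to the top Koszul homology $H_d(\bY,-)$ (Theorem~\ref{mom}, Corollary~\ref{full-koszul-hom}, Remark~\ref{key-remark}). Over the field it then argues by induction on $m$ using the exact sequences attached to $H_l(\partial_m,M)$ and $H_l(X_m,M)$ together with Lemma~\ref{sushil}. Your argument bypasses all of this: once one has the graded $A_m(A)$-action and the identity $\E|_{M_n}=n$, the two propagation maps $M_{n-1}\to M_n$ (surjective for $n\neq 0$) and $M_{n+1}\to M_n$ (surjective for $n\neq -m$) finish the proof immediately, and regularity of $A$ is never used.

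Two remarks. First, the step you flag as delicate is in fact routine: since $\partial_i$ is an $A$-linear derivation of $R$, it extends uniquely to each $R_f$ by the quotient rule and commutes with the \v{C}ech differentials, so the induced action on $H^i_I(R)$ is canonical and the relation $[\partial_i,X_j]=\delta_{ij}$ (hence $\sum\partial_iX_i=\E+m$) persists; the paper establishes this in greater generality in Lemma~\ref{BHU} and Property~\ref{property4}. Second, the reason the paper takes the longer route is that it simultaneously proves Theorem~\ref{tame-rigid-general} for arbitrary graded Lyubeznik functors, whose outputs are only \emph{generalized} Eulerian, and the heavier machinery (holonomicity, reduction to a field) is needed anyway for the Bass-number and associated-prime results later in the paper. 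Your argument, as written, uses that $H^i_I(R)$ is genuinely Eulerian (which it is, by your \v{C}ech computation); it can be adapted to the generalized Eulerian case by a short induction on the nilpotency index of $\E-n$, but for the statement at hand this is unnecessary.
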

We complement Theorem \ref{tame} by showing the following
\begin{example}\label{ex-tame} There exists a regular ring $A$ and homogeneous ideals $I, J, K, L$ in $R = A[X_1,\ldots, X_m]$ such that
\begin{enumerate}[\rm (i)]
\item
$H^i_I(R)_n \neq 0$ for all $n \leq -m$ and $H^i_I(R)_n = 0$ for all $n > - m$.
\item
$H^i_J(R)_n \neq 0$ for all $n \geq 0$ and $H^i_I(R)_n = 0$ for all $n < 0$.
\item
$H^i_K(R)_n \neq 0$ for all $n \in \ZZ$.
\item
$H^i_L(R)_n \neq 0$ for all $n \leq -m$, $H^i_L(R)_n \neq 0$ for all $n \geq 0$ and $H^i_L(R) = 0$ for all $n$ with $-m < n < 0$.
\end{enumerate}
\end{example}

An easy way to construct examples of type (ii) above is as follows: Choose an ideal $Q$ in $A$ with $H^i_Q(A) \neq 0$. Then $H^i_{QR}(R) = H^i_Q(A)\otimes_A R$
will satisfy (ii). 
The author was also able to construct example of a homogeneous ideal $J$ in $R$ and ideal $Q$ in $A$ with $J \varsubsetneq  QR$ such that (ii) is satisfied and $\sqrt{J} \neq \sqrt{\mathbf{q}R}$ for any ideal $\mathbf{q}$ in $A$. Surprisingly the following general result holds:
\begin{theorem}\label{type2}
(with hypotheses as in \ref{std}). Further assume $A$ is a domain.
Suppose $J$ is a proper homogeneous ideal in $R$ such that $H^i_J(R)_n \neq 0$ for all $n \geq 0$ and $H^i_J(R)_n = 0$ for all $n < 0$. Then there exists a proper ideal $Q$ of $A$ such that $J \subseteq QR$.
\end{theorem}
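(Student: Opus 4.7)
The plan is to exhibit the desired proper ideal $Q$ directly as an annihilator. Pick any nonzero $m \in M_0 := H^i_J(R)_0$ (which exists by hypothesis) and set
\[ Q := \sqrt{\ann_A(m)}. \]
Since $m \neq 0$, $Q$ is a proper ideal of $A$. To conclude $J \subseteq QR$, it suffices to show that the content ideal $c(f) \subseteq A$ (generated by the coefficients of $f$) lies in $Q$ for every homogeneous $f \in J$; because $J$ is generated by its homogeneous elements, this gives $J \subseteq c(J) R \subseteq QR$.

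Fix a homogeneous $f \in J$ of positive degree $d$ (the $d = 0$ case is immediate from the $J$-torsion of $M$). The decisive tool is the Weyl-algebra action of $A\langle X_1,\ldots,X_m,\partial_1,\ldots,\partial_m\rangle$ on $M$, available because $A$ is regular and contains a field of characteristic zero (cf.\ Lyubeznik). The operators $\partial_i$ are $A$-linear, drop the grading by one, and satisfy the Leibniz rule. The crucial consequence of the hypothesis $M_n = 0$ for $n < 0$ is that $\partial^\gamma m = 0$ for every nonzero multi-index $\gamma$, since $\partial^\gamma m \in M_{-|\gamma|} = 0$. Using that $M$ is $J$-torsion, pick $N \geq 1$ with $f^N m = 0$ and apply $\partial^\gamma$ (for any $\gamma$ with $|\gamma| = Nd$) to this identity. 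The Leibniz expansion
\[ 0 = \partial^\gamma(f^N m) = \sum_{\delta \leq \gamma} \binom{\gamma}{\delta} \partial^\delta(f^N)\, \partial^{\gamma - \delta}(m) \]
collapses to the single term $\delta = \gamma$ (every other has $\partial^{\gamma - \delta}(m) = 0$), yielding $\partial^\gamma(f^N)\cdot m = 0$. Since $\deg f^N = Nd = |\gamma|$, a direct computation gives $\partial^\gamma(f^N) = \gamma! \cdot c_\gamma$, where $c_\gamma \in A$ is the coefficient of $X^\gamma$ in $f^N$. Characteristic zero makes $\gamma!$ a unit, so $c_\gamma m = 0$; varying $\gamma$ gives $c(f^N) \subseteq \ann_A(m)$.

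The last step uses the identity $\sqrt{c(f)} = \sqrt{c(f^N)}$, valid for any $f \in R$ and $N \geq 1$: for any prime $\mathfrak{p} \subset A$, one checks $c(f^N) \subseteq \mathfrak{p} \Leftrightarrow f^N \in \mathfrak{p}R \Leftrightarrow f \in \mathfrak{p}R \Leftrightarrow c(f) \subseteq \mathfrak{p}$, the middle equivalence because $R/\mathfrak{p}R \cong (A/\mathfrak{p})[X_1,\ldots,X_m]$ is a domain. Combining,
\[ c(f) \subseteq \sqrt{c(f)} = \sqrt{c(f^N)} \subseteq \sqrt{\ann_A(m)} = Q, \]
which is what we wanted. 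The main obstacle is the step forcing the coefficients of $f^N$ into $\ann_A(m)$: it is precisely there that the vanishing $M_n = 0$ for $n < 0$ (to eliminate the Leibniz cross-terms) and the characteristic-zero hypothesis (to invert $\gamma!$) are both indispensable.
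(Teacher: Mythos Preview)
Your proof is correct and takes a genuinely different, far more direct route than the paper's.

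The paper argues by contradiction: assuming some $f\in J$ has $\content(f)=A$, it localizes at a minimal prime of $M_0$, completes to $B=K[[Y_1,\ldots,Y_g]]$, invokes the full machinery of graded holonomic generalized Eulerian $\D_m$-modules (Theorem~\ref{mom}), passes to the Koszul homology $H_d(\bY,-)$ to land in holonomic $A_m(K)$-modules, and finally constructs an embedding $T=K[X_1,\ldots,X_m]\hookrightarrow V$ into a module that is $\ov{f}$-torsion --- a contradiction. The choice of $Q$ is then obtained separately by a maximal-element argument in the family $\{\content(f):f\in J\}$.

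Your argument bypasses all of this. You work directly over $A$ using only the graded $A_m(A)$-module structure on $M$ (available from Lemma~\ref{BHU} with $\Lambda=B=A$, Example~\ref{ex-gen-eul}(2)), and extract $Q=\sqrt{\ann_A(m)}$ explicitly from a single element $m\in M_0$. The Leibniz collapse, driven by $M_{<0}=0$, is the entire engine; holonomicity, the generalized Eulerian property, the reduction to a field, and Koszul homology play no role. The payoff is a short, self-contained proof that makes the dependence on characteristic zero (inverting $\gamma!$) and on the vanishing hypothesis (killing cross-terms) completely transparent. The paper's approach, by contrast, illustrates its general reduction-to-$A_m(K)$ philosophy and the structure theory of holonomic modules, which is what powers most of the other results there --- but for this particular statement your argument is both simpler and more informative about why the conclusion holds.
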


\textbf{III} (\textit{Rigidity:})  Surprisingly  non-vanishing of a single  graded component of $M = H^i_I(R)$ is 
very strong. We prove the following rigidity result:
\begin{theorem}\label{rigid}
(with hypotheses as in \ref{std}).  The we have
\begin{enumerate}[\rm (a)]
\item
The following assertions are equivalent:
\begin{enumerate}[\rm(i)]
\item
$M_r \neq 0$ for  some $r \leq -m$.
\item
$M_n \neq 0$ for all $n \leq -m$.
\end{enumerate}
\item
The following assertions are equivalent:
\begin{enumerate}[\rm(i)]
\item
$M_s \neq 0$ for some $s \geq 0$.
\item
$M_n \neq 0$ for all $n \geq 0$.
\end{enumerate}
\item
(When $m \geq 2$.) 
The following assertions are equivalent:
\begin{enumerate}[\rm(i)]
\item
$M_t \neq 0$ for some $t$ with $-m < t < 0$.
\item
$M_n \neq 0$ for all $n \in \ZZ$.
\end{enumerate}
\end{enumerate}
\end{theorem}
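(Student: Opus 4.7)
The plan is to exploit the natural action of the partial derivatives $\partial_i := \partial/\partial X_i$ on $M = H^i_I(R)$. Since $A$ is regular of characteristic zero, $R$ is a left module over the ring of $A$-linear differential operators $D_{R/A}$; each localization $R_f$ inherits a $D_{R/A}$-structure via the quotient rule, so the \v{C}ech complex on a homogeneous generating set of $I$ is a complex of graded $D_{R/A}$-modules with homogeneous $D$-linear differentials. Consequently $M$ acquires graded operators $X_i\colon M_n \to M_{n+1}$ and $\partial_i\colon M_n \to M_{n-1}$ satisfying $[\partial_i,X_j] = \delta_{ij}$. Summing gives the Euler identities $\sum_i X_i\partial_i = n\cdot\mathrm{id}_{M_n}$ and $\sum_i \partial_iX_i = (n+m)\cdot\mathrm{id}_{M_n}$.

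The key observation is the following propagation lemma. The compositions
\[
M_n \xrightarrow{(X_1,\ldots,X_m)} M_{n+1}^{\oplus m} \xrightarrow{\sum \partial_i} M_n
\quad\text{and}\quad
M_n \xrightarrow{(\partial_1,\ldots,\partial_m)} M_{n-1}^{\oplus m} \xrightarrow{\sum X_i} M_n
\]
equal multiplication by $n+m$ and by $n$ respectively. Because $A \supseteq \mathbb{Q}$, any nonzero integer acts invertibly on $M$; the first composition is therefore an isomorphism when $n \neq -m$ and the second when $n \neq 0$. This yields the two rules:
\begin{enumerate}[\rm (i)]
\item if $M_n \neq 0$ and $n \neq -m$, then $M_{n+1} \neq 0$;
\item if $M_n \neq 0$ and $n \neq 0$, then $M_{n-1} \neq 0$.
\end{enumerate}

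All three parts of Theorem \ref{rigid} now reduce to bookkeeping with these rules. In (a), from $M_r \neq 0$ with $r \leq -m$, rule (ii) yields $M_n \neq 0$ for every $n \leq r$ (each such $n$ is strictly negative) and rule (i) ascends from $r$ to $-m$ (no intermediate starting index in $\{r,\ldots,-m-1\}$ equals the forbidden value $-m$). In (b), from $M_s \neq 0$ with $s \geq 0$, rule (i) propagates upward without obstruction (no $n \geq 0$ equals $-m$, since $m \geq 1$), and rule (ii) descends from $s$ to $0$ (the intermediate starting values $s,s-1,\ldots,1$ are all positive). In (c), with $m \geq 2$ and $-m < t < 0$, the starting index $t$ avoids both forbidden values; rule (i) then ascends past every integer (the obstruction $-m$ stays strictly below the running index) and rule (ii) descends past every integer (the obstruction $0$ stays strictly above), yielding $M_n \neq 0$ for all $n \in \ZZ$.

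The main obstacle is not the combinatorics but the initial setup: one must verify that the $\partial_i$-action on $M$ is well defined and grading-compatible, and that the Euler identity genuinely descends from $R$ to each $R_f$ and hence to each $M_n$. This is precisely where the characteristic-zero hypothesis becomes indispensable, both to furnish the derivations $\partial_i$ on $R$ and to invert the scalars $n$ and $n+m$ that appear on $M_n$; once this $D$-module input is isolated, the proof is uniform across the three parts.
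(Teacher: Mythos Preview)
Your argument is correct and considerably more direct than the paper's. The paper does not exploit the Euler identity on $M_n$ as a scalar; instead it passes through a long chain of reductions: localize $A$ at a minimal prime of a nonzero component and complete, to land in $A=K[[Y_1,\ldots,Y_d]]$; invoke holonomicity (Theorem~\ref{mom}) and then Koszul homology in $Y_1,\ldots,Y_d$ (Corollary~\ref{full-koszul-hom}, Remark~\ref{key-remark}) to reduce further to $A=K$; and finally prove the field case (Theorems~\ref{rig-tame-dim-0} and~\ref{rig-tame-suprise-dim-0}) by induction on $m$ via the exact sequences for $H_l(\partial_m,-)$ and $H_l(X_m,-)$. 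Your route bypasses all of this by observing that the \v{C}ech complex presents $H^i_I(R)$ as a subquotient of Eulerian $D_{R/A}$-modules, so that $\mathcal{E}_m$ acts as the honest scalar $n$ on $M_n$; then the factorizations of $(n+m)\cdot\mathrm{id}$ and $n\cdot\mathrm{id}$ through $M_{n\pm 1}^{\oplus m}$ give the two propagation rules immediately.

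What the paper's machinery buys is generality: its Theorem~\ref{tame-rigid-general} applies to $\mathcal{T}(R)$ for an arbitrary graded Lyubeznik functor $\mathcal{T}$, where only the \emph{generalized} Eulerian property is available (the Eulerian class is not closed under extensions, so $\mathcal{E}_m$ need not act as a scalar). Your argument as written uses that $\mathcal{E}_m = n$ on $M_n$, which is specific to $H^i_I(R)$. That said, your method adapts: if $\mathcal{E}_m - n$ is merely locally nilpotent on $M_n$, then for $n+m\neq 0$ the operator $\sum_i\partial_iX_i = (n+m) + (\mathcal{E}_m - n)$ is still bijective (a unit plus a locally nilpotent endomorphism is invertible, with inverse given elementwise by a finite geometric series), and likewise for $\sum_i X_i\partial_i$ when $n\neq 0$. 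So your propagation lemma, and hence the whole proof, extends to the generalized Eulerian setting with essentially no extra work, recovering the full Theorem~\ref{tame-rigid-general} by a much shorter path than the paper's.
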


\textbf{IV} \textit{(Infinite generation:)} Recall that each component of $H^m_{R_+}(R)$ is a finitely generated $R$-module, cf., \cite[15.1.5]{BS}. We give a sufficient condition for infinite generation of a component of graded local cohomology module over $R$.
\begin{theorem}\label{inf-gen}(with hypotheses as in \ref{std}). Further assume $A$ is a domain. Assume $I \cap A \neq 0$. If $H^i_I(R)_c \neq 0$ then
$H^i_I(R)_c$ is NOT finitely generated as an $A$-module.
\end{theorem}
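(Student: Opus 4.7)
The plan is to reduce to the case where $A$ is a regular local ring in which $a$ is a non-unit, then show that multiplication by $a$ is surjective on all of $M := H^i_I(R)$, and finally apply Nakayama's lemma to the finitely generated $A$-module $M_c$ to force $M_c = 0$, contradicting the hypothesis. The starting observation is that if $N := H^i_I(R)_c$ is nonzero and finitely generated over $A$, then since $M$ is $I$-torsion and $a \in I \cap A$ is nonzero, every generator of $N$ is killed by some power of $a$; combining these bounds gives $a^K N = 0$ for some $K \geq 1$.

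First I would pick $\mathfrak{p} \in \Ass_A(N)$; because $a^K \in \mathrm{ann}_A(N) \subseteq \mathfrak{p}$, we have $a \in \mathfrak{p}$. Replacing the quadruple $(A,R,I,N)$ by its localization at $\mathfrak{p}$, we may assume $A$ is a regular local ring with maximal ideal $\mathfrak{m}$, that $a \in \mathfrak{m}$, and that $N$ is still nonzero and finitely generated over this local $A$. (All hypotheses of the statement survive localization at a prime of $A$, and $a$ remains nonzero in $A_\mathfrak{p}$ because $A$ is a domain.)

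The technical heart is the claim that $a \cdot M = M$. Since $a \in I$ becomes a unit in $R_a$, we have $H^j_I(R_a) = 0$ for all $j$, and the long exact sequence of $0 \to R \to R_a \to R_a/R \to 0$ collapses to an isomorphism $M \cong H^{i-1}_I(R_a/R)$. The key feature is that $R_a/R = \bigoplus_{\alpha \in \mathbb{N}^m}(A_a/A)X^\alpha$ is itself $a$-divisible as an $R$-module, since any $r/a^s + R$ equals $a \cdot (r/a^{s+1} + R)$. To transfer divisibility to the local cohomology, I would analyze the long exact sequence attached to
\[
0 \longrightarrow (R_a/R)[a] \longrightarrow R_a/R \xrightarrow{\ a\ } R_a/R \longrightarrow 0,
\]
identifying $(R_a/R)[a] \cong R/aR$ via multiplication by $a$, and reducing surjectivity of $a$ on $M$ to the vanishing of the natural map $M \to H^i_I(R/aR)$. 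To close this last step one would use the $D$-module structure on $M$ available in characteristic zero (so that $M$ is a holonomic module over the Weyl algebra, with $M_c$ identified as an eigenspace of the Euler operator $E = \sum X_j \partial_{X_j}$), together with the graded-component theorems already established -- Theorem \ref{vanish} and Theorem \ref{type2} in particular -- applied to the relevant subquotients of $M/aM$ to force the image of that map to vanish.

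Once $a \cdot M = M$ is established, passage to the $c$-th graded piece gives $a N = N$. Because $N$ is finitely generated over the local ring $A$ and $a$ lies in the Jacobson radical $\mathfrak{m}$, Nakayama's lemma yields $N = 0$, contradicting $N \neq 0$. The main obstacle I anticipate is the middle step: although $R_a/R$ is manifestly $a$-divisible, divisibility does not in general pass through the local cohomology functor (the relevant connecting homomorphism is not automatically injective), so this is where the earlier structural theorems and holonomicity of $M$ must do the real work.
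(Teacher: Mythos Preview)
Your reduction to a local situation and the Nakayama endgame are fine, but the heart of the argument---showing $aM = M$---has a genuine gap that you yourself flag but do not close. You correctly reduce surjectivity of $a$ on $M \cong H^{i-1}_I(R_a/R)$ to the vanishing of the connecting map $M \to H^i_I(R/aR)$, and then assert this ``would'' follow from the $\mathcal{D}$-module structure together with Theorems~\ref{vanish} and~\ref{type2}. But neither theorem speaks to $a$-divisibility or to this connecting map: Theorem~\ref{vanish} deduces $M=0$ from $M_n=0$ for $|n|\gg 0$, and Theorem~\ref{type2} concerns ideals whose local cohomology is supported in nonnegative degrees. More fundamentally, $aM$ is in general \emph{not} a $\mathcal{D}$-submodule of $M$, since $\delta_j(am) = \delta_j(a)\,m + a\,\delta_j m$ and $\delta_j(a)\,m$ need not lie in $aM$; hence $M/aM$ carries no obvious $\mathcal{D}$-module structure on which the structural theorems could act. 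So the invocation of holonomicity and the Euler operator here is not a proof but a hope.

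The paper avoids divisibility altogether and works directly with the $\Lambda$-module structure on the single component. After passing to $B = \widehat{A_\mathfrak{m}} = K[[Y_1,\ldots,Y_g]]$, the component $V = M_c \otimes_A B$ is a module over $\Lambda = D_K(B)$; being finitely generated over $B$ and torsion for the nonzero ideal $J = (I\cap A)B \subseteq \mathfrak{n}$, it is annihilated by some nonzero $p \in \mathfrak{n}$. Choose such $p$ of minimal $\mathfrak{n}$-order. The identity $\delta_i\, p = p\,\delta_i + \delta_i(p)$ in $\Lambda$, applied to $V$, gives $\delta_i(p)\,V = 0$ as well; in characteristic zero some $\delta_i(p)$ has strictly smaller order whenever $p$ is a non-unit, contradicting minimality. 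Hence $p$ is a unit and $V = 0$. This two-line differential descent replaces your unproven divisibility step and is the missing idea.
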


\textbf{V} (\textit{Bass numbers:}) The $j^{th}$ Bass number of an $A$-module $E$ with respect to a prime ideal $P$ is defined as $\mu_j(P,E) = \dim_{k(P)} \Ext^j_{A_P}(k(P), E_P)$ where $k(P)$ is the residue field of $A_P$. We note that if $E$ is finitely generated as an $A$-module  then $\mu_j(P,E)$ is a finite number (possibly zero) for all $j \geq 0$. In view of Theorem \ref{inf-gen} it is not clear whether $\mu_j(P, H^i_I(R)_n)$ is a finite number. Surprisingly  we have the following dichotomy:
\begin{theorem}
\label{bass-basic}(with hypotheses as in \ref{std}).  Let $P$ be a prime ideal in $A$. Fix $j \geq 0$. EXACTLY one of the following hold:
\begin{enumerate}[\rm(i)]
\item
$\mu_j(P, M_n)$ is infinite for all $n \in \ZZ$.
\item
$\mu_j(P, M_n)$ is finite for all $n \in \ZZ$. In this case EXACTLY one of the following holds:
\begin{enumerate}[\rm (a)]
\item
$\mu_j(P, M_n) = 0$ for all $n \in \ZZ$.
\item
$\mu_j(P, M_n) \neq 0$ for all $n \in \ZZ$.
\item
$\mu_j(P, M_n) \neq 0$ for all $n  \geq 0$ and $\mu_j(P, M_n) = 0$ for all 
$n < 0$.
\item
$\mu_j(P, M_n) \neq 0$ for all $n  \leq -m$ and $\mu_j(P, M_n) = 0$ for all 
$n > -m$.
\item
$\mu_j(P, M_n) \neq 0$ for all $n  \leq -m$, $\mu_j(P, M_n) \neq 0$ for all $n  \geq 0$  and $\mu_j(P, M_n)  = 0$ for all $n$ with $-m < n < 0$.
\end{enumerate}
\end{enumerate}
\end{theorem}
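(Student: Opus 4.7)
The plan is to reduce Theorem~\ref{bass-basic} to a direct application of the rigidity and vanishing results (Theorems~\ref{vanish}, \ref{rigid}) already in the paper, by constructing an auxiliary graded module whose graded components compute the Bass numbers $\mu_j(P, M_n)$.

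First I would set $A' = A_P$ (regular local since $A$ is regular) and $R' = R_P = A'[X_1,\ldots,X_m]$, so that $M_P = H^i_{IR'}(R')$ is graded with $(M_P)_n = (M_n)_P$. Since $A'$ is regular local, $k(P) = A'/PA'$ admits a finite free resolution $F_\bullet$ by finitely generated free $A'$-modules; consequently $\Ext^j_{A'}(k(P), -)$ commutes with the direct sum $M_P = \bigoplus_n (M_n)_P$, yielding
\[
N \;:=\; \Ext^j_{A'}\bigl(k(P), M_P\bigr) \;=\; \bigoplus_{n \in \ZZ} \Ext^j_{A'}\bigl(k(P), (M_n)_P\bigr).
\]
The $R'$-action on $M_P$ commutes with the $A'$-linear differentials of $F_\bullet$, so $N$ inherits the structure of a graded module over $R'/PR' = k(P)[X_1,\ldots,X_m]$, with $\dim_{k(P)} N_n = \mu_j(P, M_n)$.

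The heart of the argument, and the main obstacle, is to show that $N$ is itself a local cohomology module of $k(P)[X_1,\ldots,X_m]$ with respect to a suitable homogeneous ideal. My strategy is a double-complex argument: choose an injective resolution $R' \to E^\bullet$ of $R'$ as an $R'$-module, apply $\Gamma_{IR'}$ to get a complex of injective $R'$-modules computing $H^*_{IR'}(R')$, then apply $\Hom_{A'}(F_\bullet, -)$ to obtain a double complex. One spectral sequence has $E_2^{p,q} = \Ext^p_{A'}(k(P), H^q_{IR'}(R'))$, so in bidegree $(j,i)$ one recovers $N$. The other spectral sequence, using that $\Ext^p_{A'}(k(P), R') = k(P)[X_1,\ldots,X_m]$ in degree $p = \dim A'$ and vanishes otherwise (since $R'$ is $A'$-free and $A'$ is regular local), identifies the total cohomology with a local cohomology module of $k(P)[X_1,\ldots,X_m]$ computed with respect to a homogeneous ideal derived from $I$. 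Careful analysis of the degeneration then transfers this structure to $N$ itself.

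With $N$ identified, Theorem \ref{rigid} applied with $k(P)$ in place of $A$ says that the non-vanishing locus $\{n : N_n \neq 0\}$ is precisely one of the five patterns $\emptyset$, $\ZZ$, $\{n \ge 0\}$, $\{n \le -m\}$, or $\{n \ge 0\} \cup \{n \le -m\}$, which translate under $\dim_{k(P)} N_n = \mu_j(P, M_n)$ to cases (ii)(a)--(e). For the dichotomy between (i) and (ii), I would argue that the set $\{n : \dim_{k(P)} N_n < \infty\}$ is either empty or all of $\ZZ$: one approach is to isolate within $N$ the maximal subquotient with finite-dimensional graded components, verify that it again lies in the class of modules satisfying Theorem \ref{rigid}, and apply rigidity to conclude it is either zero or all of $N$. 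This finite/infinite dichotomy, together with the local-cohomology identification of $N$, constitutes the main technical work; once both are in place, everything else reduces to direct bookkeeping.
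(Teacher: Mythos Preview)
Your double--complex argument does correctly identify the \emph{abutment}: with $d=\dim A_P$ and $\bar I$ the image of $I$ in $k(P)[X_1,\ldots,X_m]$, one spectral sequence degenerates to show that the total cohomology in degree $n$ is $H^{\,n-d}_{\bar I}(k(P)[X_1,\ldots,X_m])$. But the other spectral sequence has ${}^{I}E_2^{p,q}=\Ext^p_{A_P}\bigl(k(P),H^q_{IR'}(R')\bigr)$, and your $N$ is the single entry ${}^{I}E_2^{j,i}$. Nothing forces this spectral sequence to degenerate: for $q\neq i$ the modules $H^q_{IR'}(R')$ need not vanish, the differentials $d_r$ can be nonzero, and even on $E_\infty$ you would only get a subquotient of the abutment. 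So the claim that $N$ itself is a local cohomology module of $k(P)[X_1,\ldots,X_m]$ is unjustified, and Theorem~\ref{rigid} (which needs $M=H^i_I(R)$, or in its general form Theorem~\ref{tame-rigid-general} needs $M=\mathcal G(R)$ for a graded Lyubeznik functor $\mathcal G$) does not apply to $N$: the functor $\Ext^j_{A_P}(k(P),-)$ is not a Lyubeznik functor. Your proposed handling of the finite/infinite dichotomy via a ``maximal subquotient with finite-dimensional components'' is likewise not a well-defined construction and has no evident stability under the operations you would need.

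The paper avoids this obstacle by never trying to realise $N$ as a local cohomology module over $k(P)[X_1,\ldots,X_m]$. Instead it observes that $H^j_{PR}\circ\mathcal F$ is again a graded Lyubeznik functor on $R$ (so Lemma~\ref{lyu-lemma} and Proposition~\ref{lyu-lemma-hypoth} give $\mu_j(P,M_n)=\mu_0(P,H^j_P(M_n))$), completes at $P$ to pass to $B=K[[Y_1,\ldots,Y_g]]$, and uses Theorem~\ref{mom} to get a graded holonomic generalized Eulerian $\mathcal D_m$-module $L$ whose $n$-th component, being supported only at the maximal ideal, is $E_B(K)^{\alpha_n}$ with $\alpha_n=\mu_j(P,M_n)$. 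Taking Koszul homology $V=H_g(\mathbf Y,L)$ produces a holonomic generalized Eulerian $A_m(K)$-module with $V_n\cong K^{\alpha_n}$. The dichotomy (i)/(ii) is then Proposition~\ref{bass-dim0}---one finite-dimensional component forces all to be, because holonomicity makes the de~Rham/Koszul homologies finite-dimensional---and the five patterns (a)--(e) follow from Theorems~\ref{Weyl}, \ref{rig-tame-dim-0} and \ref{rig-tame-suprise-dim-0} applied directly to $V$. The $\mathcal D$-module structure is not decoration here; it is what supplies both finiteness and rigidity at the level of $V$, and your spectral-sequence route has no substitute for it.
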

We also give easy examples where (i) and (ii) hold.  The only examples where the author was able to show (i) hold had $m \geq 2$.  Surprisingly  the following result holds.
\begin{theorem}
\label{bass-m-one}(with hypotheses as in \ref{std}).  Assume $m = 1$. Let $P$ be a prime ideal in $A$.  Then 
$\mu_j(P, M_n)$ is finite for all $n \in \ZZ$.
\end{theorem}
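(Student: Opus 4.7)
The plan is to exhibit each graded component $H^i_I(R)_n$ as a subquotient, in the abelian category of holonomic $D(A)$-modules, of local cohomology modules of the base ring $A$, and then invoke Lyubeznik's finiteness theorem in the equicharacteristic zero regular setting. Since $H^i_I(R)$ depends only on $\sqrt{I}$, I may assume $I$ is a radical homogeneous ideal of $R = A[X]$. The ascending chain $J_n := \{a \in A : a X^n \in I\}$ of radical ideals of $A$ stabilizes by Noetherianity at some $L := J_n$ for $n \gg 0$; setting $Q := J_0$, one has $Q \subseteq L$ radical with $I = QR + LXR$. This simple two-parameter description is the crucial feature of the $m = 1$ case.

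Next, apply Mayer-Vietoris to $U := LR$ and $V := QR + XR$. A direct check gives $U \cap V = I$ and $U + V = LR + XR$, hence
\[
\cdots \to H^i_{LR + XR}(R) \to H^i_{LR}(R) \oplus H^i_{QR + XR}(R) \to H^i_I(R) \to H^{i+1}_{LR + XR}(R) \to \cdots
\]
Flat base change along $A \to R$ gives $H^i_{LR}(R)_n = H^i_L(A)$ for $n \geq 0$ (zero otherwise). The Grothendieck spectral sequence $H^p_{QR}(H^q_{XR}(R)) \Rightarrow H^{p+q}_{QR + XR}(R)$ collapses onto the $q = 1$ row (since $H^q_{XR}(R) = 0$ for $q \neq 1$ and $H^1_{XR}(R) = R_X / R$), producing $H^i_{QR + XR}(R)_n = H^{i-1}_Q(A)$ for $n \leq -1$ and zero otherwise; the identical argument with $Q$ replaced by $L$ yields $H^i_{LR + XR}(R)_n = H^{i-1}_L(A)$ for $n \leq -1$. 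Reading off the $n$-th graded piece therefore gives $H^i_I(R)_n \cong H^i_L(A)$ for $n \geq 0$, and for $n \leq -1$ the 5-term exact sequence of $A$-modules
\[
H^{i-1}_L(A) \longrightarrow H^{i-1}_Q(A) \longrightarrow H^i_I(R)_n \longrightarrow H^i_L(A) \longrightarrow H^i_Q(A).
\]

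To conclude, use that the ring $D(A)$ of $k$-linear differential operators on $A$ sits inside $D(R)$ as the subring preserving the $\mathbb{Z}$-grading of $R$; hence each $H^i_I(R)_n$ carries a canonical $D(A)$-module structure, and the Mayer-Vietoris maps are $D(A)$-linear in each graded piece. By Lyubeznik's theorem, every $H^j_Q(A)$ and $H^j_L(A)$ lies in the Lyubeznik class $\mathcal{L}(A)$ of holonomic $D(A)$-modules, an abelian subcategory of $D(A)$-$\mathrm{Mod}$ that is closed under subobjects, quotients, and extensions, and whose objects have finite Bass numbers at every prime of $A$. Consequently $H^i_I(R)_n$, being either isomorphic to a Lyubeznik module (for $n \geq 0$) or a subquotient of such via the 5-term sequence (for $n \leq -1$), lies in $\mathcal{L}(A)$, and thus all the Bass numbers $\mu_j(P, H^i_I(R)_n)$ are finite.

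The main obstacle is setting up the Lyubeznik-type formalism, including the $D(A)$-module compatibility of the Mayer-Vietoris sequence on graded components, in the full generality of the Standard Assumption -- a general regular ring $A$ containing a field of characteristic zero, not just polynomial or formal power series rings. The reason this reduction works only when $m = 1$ (and the dichotomy of Theorem \ref{bass-basic} is genuine for $m \geq 2$) is that homogeneous radical ideals in $A[X]$ admit the simple two-parameter decomposition $I = QR + LXR$, which reduces the computation of $H^i_I(R)_n$ directly to local cohomology of $A$.
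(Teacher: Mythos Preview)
Your approach is correct and takes a genuinely different route from the paper's. The paper never analyzes the ideal $I$; it follows its standard reduction (localize at $P$, complete, take Koszul homology in the $Y$-variables) to obtain a graded holonomic generalized Eulerian $A_1(K)$-module $V$ with $\dim_K V_n = \mu_j(P,M_n)$, and then proves the key Lemma~\ref{lemma-bass-m-one}: every graded component of such a $V$ is finite-dimensional. That lemma rests on the observation that the degree-zero part of $A_1(K)$ is $K[\mathcal{E}]$ (polynomials in the Euler operator), so $V_0$ is a Noetherian $K[\mathcal{E}]$-module, and the generalized Eulerian condition forces $\mathcal{E}^r V_0 = 0$, whence $\dim_K V_0 < \infty$; Proposition~\ref{bass-dim0} then propagates finiteness to every degree. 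Your argument instead exploits the $m=1$-specific decomposition $I = QR + LXR$ and Mayer--Vietoris to realize each $M_n$ directly as an extension of Lyubeznik-type $A$-modules, which is more explicit (e.g.\ $M_n \cong H^i_L(A)$ outright for $n \geq 0$) and bypasses the Eulerian machinery entirely. The concern you flag about $D(A)$-module formalism over a general regular $A$ is not a real obstacle: Bass numbers at $P$ are computed after localizing and completing at $P$, where $A$ becomes a power series ring and Lyubeznik's holonomic theory (closure under subquotients and extensions, $D$-linearity of the natural maps in Mayer--Vietoris) applies directly --- this is the same reduction the paper uses throughout.
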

\begin{remark}\label{intr}
An intriguing consequence of Theorem \ref{bass-basic} is the following: Suppose
$M = H^i_I(M) \neq 0$ \emph{but} $M_c = 0$ for some $c$. Then for any prime ideal $P$ and $j \geq 0$ we have $\mu_j(P, M_c) = 0$ \emph{is finite}. So $\mu_j(P, M_n) < \infty$ for all $n \in \ZZ$.
\end{remark}
\textbf{VI} (\textit{ Growth of Bass numbers}). Fix $j \geq 0$. Let $P$ be a prime ideal in $A$ such that $\mu_j(P, H^i_I(R)_n)$ is finite for all $n \in \ZZ$. We may ask about the growth of the function $n \mapsto \mu_j(P, H^i_I(R)_n)$ as $n \rt -\infty$ and when $n \rt + \infty$. We prove
\begin{theorem}
\label{bass-growth}(with hypotheses as in \ref{std}).  Let $P$ be a prime ideal in $A$. Let $j \geq 0$. Suppose $\mu_j(P, M_n)$ is finite for all $n \in \ZZ$. Then there exists polynomials $f_M^{j,P}(Z), g_M^{j,P}(Z) \in \mathbb{Q}[Z]$ of degree $\leq m - 1$ such that
\[
f_M^{j,P}(n) = \mu_j(P, M_n) \ \text{for all} \ n \ll 0  \quad \text{AND} \quad  g_M^{j,P}(n) = \mu_j(P, M_n) \ \text{for all} \ n \gg 0.
\]
\end{theorem}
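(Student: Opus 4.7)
The strategy is to interpret the sequence $\mu_j(P, M_n)$ as the graded Hilbert function of a graded holonomic module over the Weyl algebra in $m$ variables over the field $k(P)$, and then apply a de Rham finiteness argument to obtain the required polynomial behaviour.

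First I would reduce to the local situation. Since $\Ext^j_A(k(P), M_n)$ is supported at $P$, replace $A$ by $A_P$ and assume $(A,\m,k)$ is a regular local ring with $k = k(P)$; note $k$ has characteristic zero because $A$ does. Fix a regular system of parameters $\bx = x_1,\ldots,x_d$ of $A$ and set $V := \Ext^j_A(A/\bx A, M)$, computed as $H^j$ of the Koszul cochain complex $K^\bullet(\bx; M)$. Then $V = \bigoplus_n V_n$ with $V_n = \Ext^j_A(A/\bx A, M_n)$, and $\mu_j(P, M_n) = \dim_k V_n$; by hypothesis each $V_n$ is finite-dimensional over $k$. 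Since every element of $A$ commutes with $\partial_1,\ldots,\partial_m$ in the Weyl algebra $D = A\langle X_1,\ldots,X_m,\partial_1,\ldots,\partial_m\rangle$, each Koszul differential is $D$-linear on $M$, so $V$ inherits a graded $D$-module structure from $M$. By Lyubeznik's theorem $M = H^i_I(R)$ is a holonomic $D$-module, and $V$ is a subquotient of the holonomic $D$-module $K^j(\bx; M) \simeq M^{\binom{d}{j}}$. Since $\bx V = 0$, $V$ descends to a graded holonomic module over $D' := D/\bx D$, the Weyl algebra in $m$ variables over the field $k$.

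Next comes the de Rham argument. Consider the de Rham complex of $V$, with differential $v \otimes \omega \mapsto \sum_i \partial_i(v)\otimes dX_i\wedge\omega$; assigning $dX_i$ internal degree $1$ makes the differentials preserve the internal grading, and the degree-$N$ piece of this complex is
\[
0 \to V_N \to V_{N-1}^{\,m} \to V_{N-2}^{\,\binom{m}{2}} \to \cdots \to V_{N-m} \to 0.
\]
By Bj\"ork's theorem the de Rham cohomology of the holonomic $D'$-module $V$ over the characteristic-zero field $k$ is a finite-dimensional $k$-vector space, hence, as a graded object, is concentrated in only finitely many internal degrees. Therefore the displayed complex is exact for all $|N|$ sufficiently large, and taking Euler characteristics gives
\[
\sum_{p=0}^{m} (-1)^p \binom{m}{p}\,\mu_j(P, M_{N-p}) = 0 \qquad \text{for all } |N| \gg 0.
\]
Writing $T$ for the shift-down operator, $(1-T)^m$ thus annihilates the sequence $\mu_j(P, M_\bullet)$ both for $N \gg 0$ and (independently) for $N \ll 0$. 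The kernel of $(1-T)^m$ on a one-sided integer sequence is precisely the set of polynomial sequences of degree $\leq m-1$ with rational coefficients, and these polynomials serve as $g_M^{j,P}$ and $f_M^{j,P}$.

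The main obstacle is verifying that $V = \Ext^j_A(A/\bx A, M)$ is holonomic and that the $D'$-module structure to which Bj\"ork's theorem is applied is the natural one coming from the $D$-action on $M$. This needs preservation of holonomicity under the functor $\Ext^j_A(A/\bx A,-)$, unwound into iterated applications of the fact that for a nonzerodivisor $x \in A$ the functors $(-)/x(-)$ and $\ker(x\cdot)$ carry holonomic $D$-modules to holonomic $(D/xD)$-modules. Provided these ingredients (presumably prepared earlier in the paper in the service of Theorems \ref{vanish}--\ref{bass-m-one}) are in hand, the rest is the Euler-characteristic and linear-recurrence manipulation above.
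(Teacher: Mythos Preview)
Your closing Euler-characteristic argument is a genuinely different route from the paper's and is cleaner: the paper establishes the analogous fact for holonomic $A_m(K)$-modules (Theorem~\ref{GB-hol}) by induction on $m$, peeling off one variable at a time via the exact sequences for $H_l(\partial_m,-)$ and $H_l(X_m,-)$ and invoking the generalized Eulerian property (Lemma~\ref{sushil}) to apply the induction hypothesis. Your global de Rham argument---finite-dimensionality of $H^*_{dR}(V)$ forces exactness of the graded strands for $|N|\gg 0$, hence the $m$-th finite difference of $n\mapsto\dim_k V_n$ vanishes---bypasses the generalized Eulerian machinery entirely and gives the polynomial bound in one stroke.

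The gap is in how you produce the holonomic $A_m(k)$-module $V$. You localize at $P$ but do not complete; over an abstract regular local ring there is no differential-operator ring with the holonomicity theory you need, and Lyubeznik's result does not assert holonomicity over the \emph{relative} Weyl algebra $D=A_m(A)$ but over the full ring of differential operators, which after completion is $\D_m=A_m(A\langle\delta_1,\ldots,\delta_d\rangle)$. In that ring the Koszul differentials $Y_i$ do \emph{not} commute with $\delta_i$, so your claim that $V$ is ``a subquotient of the holonomic $D$-module $M^{\binom{d}{j}}$'' does not yield $A_m(k)$-holonomicity. What actually works is: complete to $B=\widehat{A_P}=K[[\bY]]$, use Theorem~\ref{mom} to get that $M\otimes_A B$ is holonomic over $\D_m$, and then invoke Corollary~\ref{full-koszul-hom} to see that $V\cong H_{d-j}(\bY;M\otimes_A B)$ is holonomic over $A_m(K)$. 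The paper takes a slightly different path to the same endpoint (first apply $H^j_{PR}(-)$, so each component is a sum of copies of $E_B(K)$, then take top Koszul homology $H_d(\bY,-)$ to read off the multiplicity); your choice of $V$ is more direct, but it needs the completion step and the full $\D_m$-structure, not just $A_m(A)$.
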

By \ref{intr} if $M_c = 0$ for some $c$ then $\mu_j(P, M_n)$ is finite for all $n \in \ZZ$, $j \geq 0$ and  prime $P$ of $A$. In this case we prove:
\begin{theorem}\label{bass-growth-max}
(with hypotheses as in \ref{std}).  Let $P$ be a prime ideal in $A$. Fix $j \geq 0$. Suppose $\mu_j(P,M_c) = 0$ for some $c$ (this holds if for instance $M_c =0$). Then
\begin{align*}
f_M^{j,P}(Z) = 0  \quad &\text{or} \quad   \deg f_M^{j,P}(Z) = m -1, \\
g_M^{j,P}(Z) = 0  \quad &\text{or} \quad   \deg g_M^{j,P}(Z) = m -1.
\end{align*}
\end{theorem}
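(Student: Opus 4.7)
The plan is to apply Theorem~\ref{bass-basic} as a reduction step and then analyze the Hilbert polynomials by exploiting the Weyl algebra structure carried by an associated Ext module.

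Theorem~\ref{bass-basic} classifies the sequence $\{\mu_j(P, M_n)\}_{n \in \ZZ}$. The hypothesis $\mu_j(P, M_c) = 0$ for some $c$ excludes case (i) and sub-case (ii)(b), leaving (ii)(a), (c), (d), (e). In (a) both $f_M^{j,P}$ and $g_M^{j,P}$ are identically zero and there is nothing to prove. Sub-case (c) forces $f_M^{j,P} = 0$ automatically, (d) forces $g_M^{j,P} = 0$, while (e) gives no automatic vanishing. Hence the theorem reduces to the following symmetric pair of claims: first, if $\mu_j(P, M_n) \neq 0$ for all $n \geq 0$, then $\deg g_M^{j,P} = m - 1$; second, if $\mu_j(P, M_n) \neq 0$ for all $n \leq -m$, then $\deg f_M^{j,P} = m - 1$.

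To prove these, set $k = k(P)$ and form the graded module
\[
N \;:=\; \bigoplus_{n \in \ZZ} \Ext^j_{A_P}\bigl(k,\, (M_n)_P\bigr),
\]
which is already used implicitly in the proof of Theorem~\ref{bass-growth}. The multiplications $X_i \colon M_n \to M_{n+1}$ are $A$-linear, so they give $N$ the structure of a graded module over $k[X_1, \ldots, X_m]$ with $\dim_k N_n = \mu_j(P, M_n)$. The extra ingredient needed for the strengthened conclusion is that the partial derivatives $\partial_1, \ldots, \partial_m$ act $A$-linearly on $R$ and, by the transfer of $D$-module structure to local cohomology, also on $M = H^i_I(R)$; they lower the grading by one and commute with the $A_P$-action used to form the Ext. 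Consequently $N$ inherits the structure of a graded module over the Weyl algebra $A_m(k) = k\langle X_1, \ldots, X_m, \partial_1, \ldots, \partial_m\rangle$, with $X_i$ of degree $+1$ and $\partial_i$ of degree $-1$.

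The crux is then a rigidity statement for graded Weyl algebra modules: any nonzero finitely generated graded $A_m(k)$-module $L$ with finite-dimensional graded components has Hilbert polynomial, on either end of its grading, either identically zero or of degree exactly $m - 1$. The mechanism is Bernstein's inequality: a good filtration compatible with the $\ZZ$-grading turns $\gr L$ into a nonzero graded module over $k[X_1, \ldots, X_m, \xi_1, \ldots, \xi_m]$ of Krull dimension at least $m$, and combined with the grading this forces the Hilbert polynomial of $L$ to attain the top degree $m - 1$ whenever it is nonzero. The main obstacle is producing a finitely generated graded sub-$A_m(k)$-module of $N$ that captures the Hilbert polynomial on the relevant end. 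I expect this to follow from Lyubeznik's theorem that $H^i_I(R)$ is a holonomic $D(R, A)$-module, whence $N$ inherits a holonomic character after base change. An alternative route, if that finiteness step is delicate, is induction on $m$ using a generic linear form $Y \in R_1$ and the long exact Ext sequence associated with $0 \to R(-1) \xrightarrow{Y} R \to R/(Y) \to 0$, where $R/(Y) \cong A[X_1, \ldots, X_{m-1}]$ after a linear change of variables reduces the claim to the $m-1$ case.
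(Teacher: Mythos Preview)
Your reduction via Theorem~\ref{bass-basic} is correct and matches the paper's implicit first step. After that, however, your approach diverges from the paper's and carries two genuine gaps.

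First, the ``crux'' claim---that Bernstein's inequality forces the Hilbert polynomial of a finitely generated graded $A_m(k)$-module with finite-dimensional components to have degree exactly $m-1$---conflates two different gradings. Bernstein's inequality concerns the characteristic variety computed from the Bernstein filtration, in which $\deg X_i = \deg \xi_i = +1$; the Hilbert polynomial you need is with respect to the $\ZZ$-grading in which $\deg X_i = +1$ and $\deg \partial_i = -1$. The dimension of the characteristic variety being $\geq m$ does not translate directly into a statement about the degree of the latter polynomial, and you give no mechanism to bridge the two. Second, you acknowledge but do not resolve the finite-generation obstacle for $N = \bigoplus_n \Ext^j_{A_P}(k, (M_n)_P)$; the appeal to ``Lyubeznik's theorem'' is too vague, and the suggested induction via a generic linear form is not worked out.

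The paper avoids both problems by a different route. Rather than working with $N$ directly, it passes (as in the proof of Theorem~\ref{bass-growth}) to the completion $B = \widehat{A_P} = K[[Y_1,\dots,Y_g]]$, forms the graded holonomic generalized Eulerian $\D_m$-module $L$, and takes Koszul homology $V = H_g(\mathbf{Y}, L)$. Corollary~\ref{full-koszul-hom} guarantees $V$ is holonomic \emph{and generalized Eulerian} over $A_m(K)$, with $\dim_K V_n = \mu_j(P, M_n)$; holonomicity and finite generation come for free from this construction. The decisive input is then Proposition~\ref{msudan}: using the generalized Eulerian property, one builds an explicit map $T^p \to V$ (respectively $E(m)^q \to V$) by sending generators to a basis of $V_0$ (respectively $V_{-m}$), and rigidity (Theorem~\ref{rig-tame-dim-0}) forces this map to be an isomorphism. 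This identifies $V$ with a direct sum of copies of $T = K[X_1,\dots,X_m]$ or of $E(m)$, whose Hilbert polynomials are the binomial coefficients of degree exactly $m-1$. You never invoke the generalized Eulerian condition, but it is precisely what makes the degree computation go through; without it there is no evident reason the top-degree terms in the inductive difference equations cannot cancel.
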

\textbf{VII}
(\textit{Associate primes:}) If $E = \bigoplus_{n \in \ZZ} E_n$ is a graded $R$-module then  there are two questions regarding asymptotic primes:

\textit{Question 1:}\textit{(Finiteness:)} Is the set 
$\bigcup_{n \in \ZZ} \Ass_A E_n   $ finite?

\textit{Question 2:} \textit{(Stability:)} Does there exists integers $r , s$ such
that $\Ass_A E_n = \Ass_A E_r$ for all $n \leq r$ and $\Ass_A E_n = \Ass_A E_s$ for all $n \geq s$.

For graded local cohomology modules we show that both Questions above have affirmative answer for a large class of regular rings $A$. 
\begin{theorem}\label{ass}(with hypotheses as in \ref{std}). Further assume that either $A$ is local or a smooth affine algebra over a field $K$ of characteristic zero. Let $M = H^i_I(R) = \bigoplus_{n \in \ZZ}M_n$. Then
\begin{enumerate}[\rm (1)]
\item
$\bigcup_{n \in \ZZ} \Ass_A M_n   $ is a finite set.
\item
$\Ass_A M_n = \Ass_A M_m$ for all $n \leq -m$.
\item
$\Ass_A M_n = \Ass_A M_0$ for all $n \geq 0$.
\end{enumerate}
\end{theorem}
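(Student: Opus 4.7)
Parts (2) and (3) are essentially bookkeeping on top of Theorem \ref{bass-basic}. Since $P \in \Ass_A M_n$ if and only if $\mu_0(P, M_n) \neq 0$, applying that theorem with $j = 0$ shows that for each prime $P$ of $A$ the locus $\{n \in \ZZ : P \in \Ass_A M_n\}$ is one of the following: all of $\ZZ$ (from case (i) or case (ii)(b)), $\emptyset$ (case (a)), $\{n \geq 0\}$ (case (c)), $\{n \leq -m\}$ (case (d)), or $\{n \leq -m\} \cup \{n \geq 0\}$ (case (e)). Each of these loci is constant on the interval $(-\infty, -m]$ and constant on $[0, \infty)$, so taking the union over $P$ yields parts (2) and (3).

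Combining (2) and (3), the union $\bigcup_{n \in \ZZ}\Ass_A M_n$ collapses to the finite union $\bigcup_{n = -m}^{0}\Ass_A M_n$, and so (1) reduces to showing that $\Ass_A M_n$ is finite for each individual $n$. This is where the hypothesis on $A$ enters in an essential way. The structural point I would exploit is that each $M_n$ carries a natural $D(A)$-module structure: any $K$-linear derivation of $A$ extends uniquely to a $K$-linear, degree-zero derivation of $R = A[X_1,\ldots,X_m]$ that kills every $X_i$, and this derivation acts on $M = H^i_I(R)$ while preserving each graded piece.

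When $A$ is smooth affine over $K$, I would argue that each $M_n$ is holonomic as a $D(A)$-module: $M$ itself is holonomic over the appropriate Weyl-type algebra $D(R)$ by Lyubeznik's theorem, and the grading on $M$ can be aligned with a good Bernstein-type filtration so that holonomicity descends to each graded component. Holonomic $D(A)$-modules have finite length, and in particular only finitely many associated primes, yielding (1). When $A$ is regular local, I would instead use a Koszul / de Rham-type complex built from $\partial_1,\ldots,\partial_m$ acting on $M$ to realize each $M_n$ inside an iterated composition of Lyubeznik-type functors applied to $A$, and then invoke the Huneke--Sharp--Lyubeznik finiteness theorem for local cohomology of regular rings.

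The main obstacle is precisely this last step: defining the $D(A)$-module structure on $M_n$ is easy, but deducing finiteness of $\Ass_A M_n$ genuinely requires D-module input. In the smooth affine case the delicate point is the descent of holonomicity to each graded piece via a good filtration; in the local regular case it is the iterated-functor realization of $M_n$ that makes Huneke--Sharp--Lyubeznik applicable. Everything else, including parts (2) and (3) and the reduction that feeds into (1), is formal once Theorem \ref{bass-basic} is in hand.
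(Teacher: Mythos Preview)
Your handling of (2) and (3) matches the paper's: both invoke Theorem \ref{bass-basic} with $j=0$ to determine, for each prime $P$, exactly which $n$ satisfy $\mu_0(P,M_n)\neq 0$, and observe that the answer is constant on $(-\infty,-m]$ and on $[0,\infty)$.

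For (1) the paper takes a much shorter route that sidesteps the obstacles you flag. Rather than working component-by-component over $D(A)$, it treats $M$ as a single $R$-module and shows $\Ass_R M$ is finite. When $A$ is smooth affine over $K$, so is $R=A[X_1,\ldots,X_m]$, and \cite[3.7]{Lyu-1} applies directly. When $A$ is local with maximal ideal $\mathfrak{n}$, every associated prime of the graded module $M$ is homogeneous and hence lies below the maximal homogeneous ideal $\mathcal{M}=(\mathfrak{n},X_1,\ldots,X_m)$; localizing at $\mathcal{M}$ lands in the regular local ring $R_{\mathcal{M}}$, where \cite[3.3]{Lyu-1} gives finiteness. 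Once $\Ass_R M$ is finite, the paper invokes a mild extension of Matsumura's exercise (Proposition \ref{M-ex}: for any ring map $A\to B$ and any $B$-module $N$, $\Ass_A N=\{P\cap A : P\in\Ass_B N\}$) to conclude that $\Ass_A M=\bigcup_n\Ass_A M_n$ is finite.

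Your reduction of (1) to finiteness of each $\Ass_A M_n$ via (2) and (3) is valid, and the $D(A)$-action on $M_n$ you describe is genuine. But the remaining step is, as you admit, the hard part, and in the local case I do not see how to carry it out: the operators $\partial_i$ shift degree, so a de Rham or Koszul complex in the $\partial_i$'s entangles infinitely many $M_n$ rather than exhibiting a single $M_n$ as a Lyubeznik functor of $A$, and the Huneke--Sharp--Lyubeznik theorems speak only about local cohomology modules, not arbitrary $D(A)$-modules. The paper's argument bypasses all of this by never decomposing $M$ into graded pieces for the finiteness step.
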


\textbf{VIII}
(\textit{Dimension of Supports  and injective dimension:}) Let $E$ be an $A$-module. Let
$\injdim_A E$ denotes the injective dimension of $E$. Also 
$\Supp_A E = \{ P \mid  E_P \neq 0 \ \text{and $P$ is a prime in $A$}\}$ is the support of an $A$-module $E$. 
By $\dim_A E $ we mean the dimension of $\Supp_A E$ as a subspace of $\Spec(A)$.
We prove the following:
\begin{theorem}\label{injdim-and-dim}(with hypotheses as in \ref{std}). Let $M = H^i_I(R) = \bigoplus_{n \in \ZZ}M_n$. Then we have
\begin{enumerate}[\rm (1)]
\item
$\injdim M_c \leq \dim M_c$ for all $c \in \ZZ$.
\item
$\injdim M_n = \injdim M_{-m}$ for all $n \leq -m$.
\item
$\dim M_n = \dim M_{-m}$ for all $n \leq -m$.
\item
$\injdim M_n = \injdim M_{0}$ for all $n \geq 0$.
\item
$\dim M_n = \dim M_{0}$ for all $n \geq 0$.
 \item
 If $m \geq 2$ and $-m < r,s < 0$ then
 \begin{enumerate}[\rm (a)]
 \item
 $\injdim M_r = \injdim M_{s}$  and $\dim M_r = \dim M_{s}$.
 \item
 $\injdim M_r \leq \min \{ \injdim M_{-m}, \injdim M_{0} \}$.
 \item
 $\dim M_r \leq \min \{ \dim M_{-m}, \dim M_{0} \}$.
 \end{enumerate}
\end{enumerate}
\end{theorem}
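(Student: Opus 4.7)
The unifying mechanism throughout is the graded action of the Weyl algebra $D(R,K) = D(A,K)\langle X_1,\ldots,X_m,\partial_1,\ldots,\partial_m\rangle$ on $M = H^i_I(R)$, with $\deg X_i = 1$, $\deg \partial_i = -1$, and $D(A,K)$ concentrated in degree zero. Consequently each $M_n$ is a $D(A,K)$-module, and the multiplications $X_i\colon M_n \to M_{n+1}$ and $\partial_i\colon M_n \to M_{n-1}$ are $D(A,K)$-linear. This is the engine for comparing distinct components.

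For (1), the first step is to recognise each $M_n$ as a ``Lyubeznik-type'' $D(A,K)$-module, i.e., an iterated extension of modules of the form $H^j_J(A)$ for various ideals $J$ of $A$ and integers $j$, realised by filtering the Mayer--Vietoris / local-cohomology spectral sequence in a way compatible with the grading. Since $A$ is regular containing a field of characteristic zero, Lyubeznik's bound gives $\injdim_A H^j_J(A) \leq \dim_A H^j_J(A)$, and this inequality propagates through short exact sequences, yielding $\injdim_A M_n \leq \dim_A M_n$.

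For parts (2)--(5), the plan is to use the $X_i$'s and $\partial_i$'s to compare distinct components. For $n \geq 0$ the commutation $[\partial_i, X_i] = 1$, combined with induction on $n$, produces short exact sequences of $D(A,K)$-modules linking $M_n$ and $M_0$ whose auxiliary terms have dimension and injective dimension bounded by those of $M_0$; the equalities $\dim M_n = \dim M_0$ and $\injdim M_n = \injdim M_0$ then follow. Symmetrically, for $n \leq -m$, the $\partial_i$'s play the role of the $X_i$'s and yield stability with respect to $M_{-m}$. Structurally this parallels the rigidity and tameness patterns already recorded in Theorems \ref{tame} and \ref{rigid}.

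For (6), the middle range $-m < r < 0$ is governed by the Koszul (de Rham) complex $K^\bullet(\partial_1,\ldots,\partial_m;M)$: tracking the internal grading identifies each such $M_r$ with a subquotient of the de Rham cohomology of $M$, itself a $D(A,K)$-module dominated by both $M_0$ and $M_{-m}$. Parts (a), (b), (c) would then fall out of the comparison. The principal obstacle I foresee is precisely (6)(b)--(c): equating dimensions among middle components is essentially one Koszul identification, but establishing the sharp two-sided domination by \emph{both} $M_{-m}$ and $M_0$ demands a careful book-keeping of the internal degree against the cohomological degree, and a verification that no differential in the spectral sequence introduces a component with inflated dimension or injective dimension.
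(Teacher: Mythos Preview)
Your approach diverges substantially from the paper's, and several steps are not justified as written.

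The paper does not attempt any direct comparison of $M_n$ with $M_{-m}$ or $M_0$ via $X_i$- and $\partial_i$-maps at the level of $A$. Instead, everything is reduced to two invariants that were already controlled earlier: for injective dimension, the set $\{(P,j) : \mu_j(P,M_n) > 0\}$; for dimension, the set $\{P : (M_n)_P \neq 0\}$. Parts (2)--(5) and (6) then follow in one line each from Theorem~\ref{bass-basic} (the Bass-number dichotomy) and Theorem~\ref{rigid} (rigidity), respectively: for $n \le -m$ one has $\mu_j(P,M_n) > 0 \Leftrightarrow \mu_j(P,M_{-m}) > 0$ and $(M_n)_P \neq 0 \Leftrightarrow (M_{-m})_P \neq 0$, and similarly for the other ranges. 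No new short exact sequences or spectral sequences are needed at this stage.

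For (1), the paper's argument is likewise indirect. It first shows (via the generalized Eulerian holonomic machinery after localizing and completing at $P$) that $(H^j_P(M_c))_P$ is an injective $A$-module; Lyubeznik's lemma then gives $\mu_j(P,M_c) = \mu_0(P,H^j_P(M_c))$, and Grothendieck vanishing finishes. Your proposed route---filter $M_n$ by subquotients of the form $H^j_J(A)$ via a Mayer--Vietoris/spectral-sequence argument---is the genuine gap. No such filtration is established in the paper, and for a general regular $A$ containing a field (not a power series ring, not an affine smooth algebra) it is not clear how you would produce one compatibly with the grading. Moreover, your notation $D(A,K)$ presupposes a well-behaved ring of $K$-linear differential operators on $A$; the paper only invokes that ring \emph{after} passing to $\widehat{A_P} \cong K[[Y_1,\ldots,Y_g]]$, where it is available by construction. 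Working over general $A$, the paper uses only the $m$-th Weyl algebra $A_m(A)$ and the generalized Eulerian condition, which is a much weaker structure than you are assuming.

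In short: your instinct to exploit the $X_i,\partial_i$-action is correct in spirit, but the actual argument routes that action through localization, completion, and Koszul homology in $\mathbf{Y}$ to reduce to $A_m(K)$-modules---and then simply quotes the rigidity theorems already proved. The direct comparisons you sketch would need the filtration claim in (1) and the ``bounded auxiliary terms'' claim in (2)--(5) made precise, and neither is straightforward over arbitrary regular $A$.
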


\emph{Techniques used to prove our results:}
We use three main techniques to prove our results:

\s \label{Weyl-mod}(a)  For the first  technique  $A$ is arbitrary regular ring containing a field of characteristic zero.  Let  $R = A[X_1,\ldots,X_m]$, graded with $\deg A = 0$ and $\deg X_i = 1$ for all $j$. Let $\Sc$ be the $m^{th}$ Weyl-algebra on $A$. We consider it a graded ring with $\deg A = 0$, $\deg X_i = 1$ and $\deg \partial_i = -1$. We note that $R$ is a graded subring of $\Sc$. If $E$ is a graded $\Sc$-module and $e$ is a homogeneous element of $E$ then set $|e| = \deg e$. 

Consider the Eulerian operator $\E = \sum_{i = 1}^{m} X_i \partial_i$. If $f \in R$ is homogeneous then it is easy to check that $\E f = |f| f$. We say a graded $\Sc$-module $W$ is \textit{Eulerain} if $\E w = |w| w$ for each homogeneous element $w$ of $W$. Notice $R$ is an Eulerian $\Sc$-module.  We say $W$ is \textit{generalized Eulerian} if for each
homogeneous $w$ of $W$ there exists $a$ depending on $w$ such that
$(\E - |w|)^aw = 0$.

The notion of Eulerian modules was introduced in the case $A$ is a field $K$ by Ma and Zhang \cite{MaZhang} (they also defined the notion of Eulerian  $D$-modules in characteristic $p > 0$, where $D$ is the ring of $K$-linear  differential operators on $R = K[X_1,\ldots, X_m]$). Unfortunately however the class of Eulerian $D$-modules is not closed under extensions (see 3.5(1) in \cite{MaZhang}). To rectify this,   the author introduced the notion generalized Eulerian $D$-modules (in characteristic zero), see \cite{P2}.

One can define the notion of graded Lyubeznik functors on $\ ^*Mod(R)$ the category of all graded $R$-modules, see \ref{defn-grade-Lyu-functor}. 
Our techniques in \cite[ Theorem 1.7]{PS}  generalize to prove the following:
\begin{theorem}\label{third}[with hypotheses as in \ref{Weyl-mod}.]
 Let $\mathcal{G}$ be a graded Lyubeznik functor on $^*Mod(R)$. Then  $\mathcal{G}(R)$ is a generalized Eulerian $\Sc$-module.  
\end{theorem}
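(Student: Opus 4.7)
The plan is to adapt the argument of \cite[Theorem 1.7]{PS}, which proves this statement when $A$ is a field, to the present setting. A graded Lyubeznik functor $\mathcal{G}$ is by definition an iterated composition of functors of two kinds: local cohomology $H^i_I(-)$ with respect to a homogeneous ideal $I$, and the kernel, image, or cokernel of a natural transformation $H^i_I \to H^i_J$ for homogeneous $J \subseteq I$. I would prove the theorem by induction on the depth of this composition. The base case is that $R$ itself is Eulerian (a fortiori generalized Eulerian), immediate from $\mathcal{E}(f) = |f|\, f$ on homogeneous $f \in R$.

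The inductive step rests on three closure properties of the class of generalized Eulerian graded $\mathcal{S}$-modules, which I would establish first: closure under graded $\mathcal{S}$-subquotients (obvious); closure under extensions (if $0 \to M' \to M \to M'' \to 0$ is exact and $w \in M$ is homogeneous, pick $a''$ with $(\mathcal{E}-|w|)^{a''}\bar w = 0$ in $M''$ and then $a'$ killing the resulting element of $M'$, so that $(\mathcal{E}-|w|)^{a'+a''}w = 0$); and closure under localization at a homogeneous $f \in R$, which follows from the identity $\mathcal{E}(f^{-n}) = -n|f| f^{-n}$ (a consequence of $\mathcal{E}$ being a derivation on $\mathcal{S}$), giving
\[
(\mathcal{E} - \deg(mf^{-n}))^{a}(mf^{-n}) = \bigl((\mathcal{E} - |m|)^{a} m\bigr) f^{-n},
\]
so the nilpotency exponent transfers from $m$ to $mf^{-n}$.

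With these in hand, for a generalized Eulerian $M$ and a homogeneous ideal $I$, one computes $H^i_I(M)$ via the \v{C}ech complex on homogeneous generators of $I$: each term is a finite direct sum of localizations, hence generalized Eulerian, and the differentials are graded $\mathcal{S}$-linear, so $H^i_I(M)$ is generalized Eulerian as a subquotient. For $J \subseteq I$ the natural map $H^i_I(M) \to H^i_J(M)$ is again graded $\mathcal{S}$-linear, and its kernel, image, and cokernel are therefore generalized Eulerian. The induction concludes. The main obstacle is the bookkeeping verification that localization at a homogeneous element is a morphism in $\ ^*Mod(\mathcal{S})$ --- equivalently, that each $\partial_j$ acts on $mf^{-n}$ via the expected Leibniz-type formula --- and that the \v{C}ech differentials and the natural transformations $H^i_I \to H^i_J$ consequently live in the category of graded $\mathcal{S}$-modules. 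In the case $A$ is a field this is carried out in \cite{PS}; the passage to a regular $A$ is essentially formal once the graded $\mathcal{S}$-action on localizations of graded $R$-modules is set up correctly, which is where I would spend most care.
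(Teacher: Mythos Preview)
Your proposal is correct and follows essentially the same route as the paper: the paper likewise reduces to the three closure properties (Property~\ref{property1} for subquotients and extensions, Property~\ref{property4} for homogeneous localization), computes $H^i_I(M)$ via the \v{C}ech complex (Remark~\ref{hom-gen-eul}), and then handles the locally closed case via the long exact sequence. The one organizational difference is that the paper first isolates, as a separate Lemma~\ref{BHU}, the fact that $\mathcal{G}(R)$ carries a graded $\mathcal{S}$-module structure at all---proved there via $^*$-injective resolutions in $^*\mathrm{Mod}(\mathcal{S})$ together with the observation (Proposition~\ref{inj}) that $^*$-injective $\mathcal{S}$-modules restrict to $^*$-injective $R$-modules---whereas you fold this into the \v{C}ech argument; both are valid and the difference is cosmetic.
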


\s \label{D-mod}(b) For the second technique we look at $A= K[[Y_1,\ldots, Y_d]]$ where $K$ is a field of characteristic zero. Let  $R = A[X_1,\ldots,X_m]$, graded with $\deg A = 0$ and $\deg X_i = 1$ for all $j$. Let $D_k(A) = A<\delta_1,\ldots, \delta_d>$ (where $\delta_j = \partial /\partial Y_j$) be the ring of $k$-linear differential operators on  $A$. Furthermore set $\D = A_m(D_k(A))$ be the $m^{th}$-Weyl algebra over $D_k(A)$. We
can consider $\D$ a graded ring with $\deg D_k(A) = 0$, $\deg X_i = 1$ and $\deg \partial_i = -1$.  We note that $R$ is a graded subring of $\D$. 

It is well-known that the global dimension of $\D$ is $d + m$, see \cite[3.1.9]{Bjork}. Furthermore there is
a filtration $\mathcal{T}$ of $\D$ such that the associated graded $\gr_\T(\D)$ is the polynomial ring over $A$ with
$d + 2m$-variables, \cite[3.1.9]{Bjork}. Thus a  $\D$-module $M$ is holonomic if either it is zero  or there a
$\T$ compatible filtration $\F$ of $M$ such that 
$\gr_\F M$ is a finitely generated $\gr_\T \D$-module of dimension $d + m$. 

The main technical result we show is
\begin{theorem}\label{m-tech-cor}[with hypotheses as in \ref{D-mod}.]
Let $\mathcal{G}$ be a graded Lyubeznik functor on $^*Mod(R)$. Then  $\mathcal{G}(R)$ is a graded holonomic generalized Eulerian $\D$-module.
\end{theorem}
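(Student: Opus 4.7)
The plan is to split the statement into its two conclusions, \emph{generalized Eulerian} and \emph{holonomic}, and handle them separately. For the Eulerian half, I would observe that the $m$-th Weyl algebra $\Sc$ on $A$ from \ref{Weyl-mod} embeds as a graded subring of $\D$ and that the Eulerian operator $\E = \sum_{i=1}^{m} X_i \partial_i$ lies in this subring. Since the condition $(\E - |w|)^a w = 0$ depends only on the $\E$-action, Theorem \ref{third} immediately hands us that $\G(R)$, being generalized Eulerian over $\Sc$, is also generalized Eulerian over $\D$. This reduces matters to proving that $\G(R)$ is graded holonomic over $\D$.

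For holonomicity I would argue by structural induction on the Lyubeznik functor $\G$, which by definition is built by composing functors of the form $H^j_K(-)$ (for $K$ a homogeneous ideal of $R$) with kernels, images, and cokernels of their natural connecting morphisms. It therefore suffices to establish:
\begin{enumerate}[\rm(i)]
\item $R$ itself is a holonomic graded $\D$-module;
\item the full subcategory of graded holonomic $\D$-modules is a Serre subcategory of $^{*}\operatorname{Mod}(\D)$, hence closed under subquotients and extensions;
\item for every homogeneous ideal $K$ of $R$ and every graded holonomic $\D$-module $M$, each $H^j_K(M)$ is again holonomic.
\end{enumerate}
For (i) I would equip $R$ with the good $\T$-compatible filtration by total degree in $X_1,\ldots,X_m$; the associated graded is a cyclic module over $\gr_\T(\D)$ whose support has Krull dimension $d + m$, matching that of the regular ring $R$, so $R$ is holonomic. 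Claim (ii) is the standard additivity of multiplicity for good filtrations: since $d+m$ is exactly half the dimension of $\gr_\T(\D)$, Bernstein's inequality places holonomic modules at the minimal dimension, forcing the subcategory to be closed under subquotients and extensions.

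The essential content is (iii), and this is where I expect the main technical obstacle. Using the \v{C}ech complex on homogeneous generators $f_1,\ldots,f_r$ of $K$, each $H^j_K(M)$ is a subquotient of a direct sum of localizations $M_{f_{i_1}\cdots f_{i_\ell}}$, so by (ii) it is enough to show that $M_f$ is holonomic for every holonomic $M$ and every homogeneous $f \in R$. Over a field this is Bernstein's theorem via a Bernstein-Sato polynomial; in the relative setting here I would transplant the argument into Bj\"ork's filtered-ring framework \cite{Bjork} for $\D = A_m(D_k(A))$, which applies because $A = K[[Y_1,\ldots,Y_d]]$ is a complete regular local ring of finite Krull dimension over a field of characteristic zero, so $\gr_\T(\D)$ is a commutative Noetherian polynomial ring in $d+2m$ variables over $A$. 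All filtrations and functors above respect the grading inherited from $\deg A = 0$ and $\deg X_i = 1$, so the resulting holonomicity is automatically in the graded category, completing the proof.
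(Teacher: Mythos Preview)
Your treatment of the generalized Eulerian half is fine and matches the paper: both simply invoke Theorem~\ref{third} (which is Theorem~\ref{gen-eul-Lyu} in the body).

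The gap is in your step~(iii). You claim that $M_f$ is holonomic over $\D_m$ whenever $M$ is, by ``transplanting the argument into Bj\"ork's filtered-ring framework.'' But Bj\"ork's general filtered-ring machinery (Chapter~2 of \cite{Bjork}) only gives you dimension theory, Bernstein's inequality, and the Serre-subcategory property --- your (i) and (ii). The preservation of holonomicity under localization is a separate and harder fact; Bj\"ork proves it (via the $b$-function) in Chapter~3 \emph{specifically for rings of differential operators on formal power series rings}. Here $R = K[[Y_1,\ldots,Y_d]][X_1,\ldots,X_m]$ is neither a polynomial ring over a field nor a power series ring, so neither Bernstein's original theorem nor Bj\"ork's version applies directly to $\D_m$. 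Saying that $\gr_\T(\D)$ is a commutative Noetherian polynomial ring over $A$ is not enough: that is exactly the Chapter~2 input, not the Chapter~3 conclusion. Filling this step would require you to produce a Bernstein--Sato polynomial for $f \in R$ in this mixed setting, which is possible but is real work, not a citation. (A minor aside: the filtration you describe in (i), ``by total degree in $X_1,\ldots,X_m$,'' is not $\T$-compatible; the correct good filtration on $R$ is the trivial one $\F_n = R$ for $n \geq 0$.)

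The paper avoids this problem entirely by a completion trick. It passes from $R$ to $\widehat{R} = K[[Y_1,\ldots,Y_d,X_1,\ldots,X_m]]$, which \emph{is} a power series ring, so that Lyubeznik's result \cite[2.2d]{Lyu-1} (built on Bj\"ork's Chapter~3) gives holonomicity of $\widehat{R}\otimes_R \G(R)$ over the differential operators $\Sc$ on $\widehat{R}$. The substance of the paper's Section~4 is then the descent: Proposition~\ref{crucial} shows that finite generation over $\Sc$ forces finite generation over $\D_m$ for \emph{graded} modules (this uses faithful flatness of $R \to \widehat{R}$ on $^*\mathrm{Mod}(R)$), and Theorem~\ref{compare-dimension} shows the dimensions agree, yielding Corollary~\ref{rach2}. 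So rather than redoing the structural induction on the Lyubeznik functor at the level of $\D_m$, the paper borrows it wholesale from the power-series setting and invests its effort in the comparison.
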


\s \label{deRham}(c) The final technique that we use is the technique of de Rham cohomology,
Koszul homology of generalized Eulerian modules (when $A = K$, a field).
We also prove
\begin{theorem}\label{kos-hom}
Let $K$ be a field of characteristic zero.
Let $A = K[[Y_1,\ldots, Y_d]]$ and $R = A[X_1,\ldots,X_n]$ and $\D$ is as in \ref{D-mod}. Let $E$ be a graded holonomic generalized Eulerian $\D$-module.
Then for all $i \geq 0$ the Koszul homology $H_i(Y_1,\ldots, Y_d; E)$ is a graded holonomic  generalized Eulerian $A_m(K)$-module (here $A_m(K)$ is the $m^{th}$-Weyl algebra over $K$).
\end{theorem}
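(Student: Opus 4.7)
The plan is to treat gradedness, the generalized Eulerian property, and holonomicity in turn; the first two are formal, and holonomicity is the substantive point. Let $K_\bullet := K_\bullet(Y_1,\ldots,Y_d; E)$ with $K_p = \bigoplus_{|I|=p} E\cdot e_I$ and the standard differential. Every $X_i$ and every $\partial_{X_i}$ commutes with every $Y_j$ in $\D$, so the $A_m(K)$-action on $E$ passes termwise to $K_\bullet$ and commutes with the differential; hence $A_m(K)$ acts on each $H_i(\underline{Y};E)$. Declaring $\deg e_I := 0$ makes $K_\bullet$ a complex of graded $A_m(K)$-modules with degree-zero differential (since $\deg Y_j = 0$), so the grading descends to homology. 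For the Eulerian property, the Euler operator $\E = \sum_{i=1}^m X_i \partial_{X_i}$ also commutes with each $Y_j$. A homogeneous class $w \in H_p(\underline{Y};E)_n$ is represented by a finite cycle $z = \sum_I z_I\cdot e_I$ with $z_I \in E_n$; each $z_I$ is killed by some power $(\E-n)^{a_I}$, so taking $a := \max_I a_I$ gives $(\E-n)^a z = 0$ and hence $(\E-n)^a w = 0$.

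For holonomicity I would induct on $d$, using the factorization $K_\bullet(Y_1,\ldots,Y_d;E) \simeq K_\bullet(Y_1,\ldots,Y_{d-1};A) \otimes_A K_\bullet(Y_d;E)$ and the resulting spectral sequence
\[
E^2_{p,q} = H_p\bigl(Y_1,\ldots,Y_{d-1};H_q(Y_d;E)\bigr) \;\Longrightarrow\; H_{p+q}(Y_1,\ldots,Y_d;E).
\]
It therefore suffices to show that $H_q(Y_d;E)$ is a graded holonomic generalized Eulerian module over $\D' := A_m(D_K(K[[Y_1,\ldots,Y_{d-1}]]))$, and then apply the inductive hypothesis to $\D'$. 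The grading and Eulerian parts for $H_q(Y_d;E)$ repeat the argument of the first paragraph, so the inductive step comes down to the single-variable statement: if $E$ is holonomic over $\D = A_m(D_K(K[[Y]]))$, then both $E/YE$ and $\ker(Y\colon E\to E)$ are holonomic $A_m(K)$-modules. Conceptually this is Kashiwara's preservation of holonomicity under the closed inverse image along $\{Y=0\}$, and one could simply cite it; I will instead outline a direct filtration argument.

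The base case $d=1$ is the main obstacle. Choose a good $\T$-compatible filtration $\F$ on $E$, so that $\gr_\F E$ is a finitely generated $\gr_\T \D = K[[Y]][\eta, X_1,\ldots,X_m,\xi_1,\ldots,\xi_m]$-module of Krull dimension $m+1$. The induced quotient filtration on $E/YE$, when restricted to an $A_m(K)$-compatible filtration, surjects onto the associated graded for the Bernstein filtration, whose support lies in $\Spec K[\eta, X,\xi]$; bounding its Krull dimension by $m$ gives holonomicity over $A_m(K)$, and the corresponding assertion for $\ker Y$ then follows from the long exact sequence $0 \to \ker Y \to E \xrightarrow{Y} E \to E/YE \to 0$ together with closure of the class of holonomic $A_m(K)$-modules under kernels and cokernels. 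The subtlety — and the step where characteristic zero is used in an essential way — is that $Y$ may fail to be a nonzerodivisor on $\gr_\F E$, so the naive dimension drop $\dim \gr_\F E/Y \leq \dim \gr_\F E - 1$ can fail for the chosen $\F$. My plan is to split $E$ via the short exact sequence $0 \to \Gamma_Y(E) \to E \to E'' \to 0$ into its $Y$-torsion and $Y$-torsion-free holonomic pieces (both are holonomic as $\D$-subquotients of $E$), use Kashiwara's equivalence on $\Gamma_Y(E)$ to reinterpret its Koszul homology in terms of an honest holonomic $\D'$-module supported on $\{Y=0\}$, and on $E''$ invoke a Bernstein--Sato $b$-function argument to modify $\F$ so that $Y$ is regular on the associated graded, yielding the required dimension bound.
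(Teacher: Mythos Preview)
Your inductive scaffolding and your treatment of the graded and generalized Eulerian properties match the paper (Proposition~\ref{kos-induct}, Theorem~\ref{kos-full-gen-eul}, with Lemma~\ref{exact} in place of your spectral sequence). The divergence, and the gap, is in the holonomicity step.

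The paper does not attempt any direct filtration argument. Instead it completes: with $\widehat{R}=K[[Y_1,\ldots,Y_d,X_1,\ldots,X_m]]$ and $L=\widehat{R}\otimes_R E$, Theorem~\ref{compare-dimension} and Corollary~\ref{rach2} show that $E$ is holonomic over $\D_m$ if and only if $L$ is holonomic over $\Sc_{d+m}=D_K(\widehat{R})$, the differential-operator ring on a \emph{pure} power-series ring. In that setting the one-variable Koszul step is already in Bj\"ork \cite[3.4.2, 3.4.4]{Bjork}: $H_l(Y_d,L)$ is holonomic over $\Sc_{d+m-1}$. One then identifies $H_l(Y_d,L)\cong \widehat{R'}\otimes_{R'}H_l(Y_d,E)$ and applies Corollary~\ref{rach2} over $\D_m'$ to descend (Theorem~\ref{Koszul-local}). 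The regularity-of-$Y_d$-on-$\gr_\F E$ issue you flag simply never arises, because the dimension computation has been outsourced to the formal setting where the result is in the literature.

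Your direct route has a genuine gap exactly where you locate it. Kashiwara's equivalence and the Bernstein--Sato machinery are stated for $D$-modules over polynomial rings or smooth varieties; over the mixed ring $\D_m=A_m(D_K(K[[Y_1,\ldots,Y_d]]))$ neither is available off the shelf, and your sketch does not supply them. The phrase ``modify $\F$ so that $Y$ is regular on the associated graded'' is precisely the hard part, and invoking a $b$-function does not by itself produce such a filtration in this setting (what you would need is something like a $V$-filtration along $Y=0$, whose existence and compatibility with holonomicity over $\D_m$ you would have to establish). Note also that the induced filtration on $E/YE$ coming from $\T$ has the $X_i$ in degree zero, so it is not the Bernstein filtration on $A_m(K)$; you would need to argue separately that it still detects holonomicity. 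The cleanest repair is to adopt the paper's complete--cite Bj\"ork--descend strategy rather than to redevelop the inverse-image formalism in the mixed setting.
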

 
 Our technique to prove all our results is as follows:
 Localize $A$ at an appropriate prime ideal and complete it. Then we use Theorem \ref{third}, Corollary \ref{m-tech-cor}
 and  Theorem \ref{kos-hom} to reduce to the case $A$ is a field. 
 
 \begin{remark}
  A natural question is what happens when $A$ contains a field of characteristic $p > 0$. We note that technique (a) and
  (b) have analogues in this case. However we do not know any analogue of our technique (c). So our proofs do not work in
  this case.
 \end{remark}

\textit{Acknowledgment:} In 2008, I started learning about applications of $D$-modules in local cohomology theory.
At that time Prof. G. Lyubeznik visited IIT-Bombay. I asked him whether de Rham cohomology 
of local cohomology modules will be interesting. He told me that it will be of interest. I  (and co-authors) developed 
techniques to study de Rham cohomology and Koszul cohomology  of local cohomology modules
in a series of papers \cite{P1}, \cite{P2}, \cite{PR1}, \cite{PR2} and \cite{PS}. These techniques have
proved to be fantastically useful in this paper. I thank
Prof. G. Lyubeznik for his advice and to him this paper is dedicated.

 \section{Preliminaries}
In this section, we discuss a few preliminary result that we need. 

 \s \textbf{Lyubeznik functors:} \\
 Let $B$ be a commutative Noetherian ring and let $X = \Spec(B)$. Let $Y$ be a locally closed subset of $X$.
 If $M$ is a $B$-module and  $Y$ be a locally closed subscheme of $\Spec(R)$, we denote by $H^i_Y(M)$ the
 $i^{th}$-local cohomology module of $M$ with support in $Y$.  Suppose 
 $Y = Y_1 \setminus Y_2$ where $Y_2 \subseteq Y_1$ are two closed subsets of $X$ then we have an exact sequence of functors
 \[
 \cdots \rt H^i_{Y_1}(-) \rt H^i_{Y_2}(-) \rt H^i_Y(-) \rt H^{i+1}_{Y_1}(-) \rt .
 \]
 A Lyubeznik functor $\FF$ is any functor of the form $\FF = \FF_1\circ \FF_2 \circ \cdots \circ \FF_m$ where every functor $\FF_j$  is either $H^i_Y(-)$ for some locally closed subset of $X$ or the kernel, image or
cokernel of some arrow in the previous long exact sequence for closed
subsets $Y_1,Y_2$ of $X$  such that $Y_2 \subseteq Y_1$.

\s \textit{Lyubeznik functor under flat maps:}\\
We need the following result from \cite[3.1]{Lyu-1}.
\begin{proposition}\label{flat-L}
 Let $\phi \colon B \rt C$ be a flat homomorphism of Noetherian rings. Let $\FF$ be a 
 Lyubeznik functor on $Mod(B)$. Then there exists a Lyubeznik functor $\widehat{\FF}$ on $Mod(C)$ and
 isomorphisms $\widehat{\FF}(M\otimes_B C) \cong \FF(M)\otimes_B C$ which is functorial in $M$.
\end{proposition}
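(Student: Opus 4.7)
The plan is to induct on the length of the composition defining $\FF$, with the base case being a single local cohomology functor $H^i_Y(-)$ for a locally closed $Y \subseteq \Spec(B)$. For the base case, the map $\phi^* \colon \Spec(C) \to \Spec(B)$ sends the locally closed set $Y = Y_1 \setminus Y_2$ to the locally closed subset $\widehat{Y} := (\phi^*)^{-1}(Y_1) \setminus (\phi^*)^{-1}(Y_2)$ of $\Spec(C)$. If $Y$ is defined as $V(I)$ with $I \subseteq B$ an ideal, then $(\phi^*)^{-1}(Y) = V(IC)$, and the standard flat base change theorem (e.g.\ via a \v{C}ech complex) gives a natural isomorphism
\[
H^i_I(M)\otimes_B C \;\cong\; H^i_{IC}(M\otimes_B C),
\]
functorial in $M$. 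Hence I would define $\widehat{H^i_Y}(-) := H^i_{\widehat{Y}}(-)$; the connecting maps in the long exact sequence associated to $Y_2 \subseteq Y_1$ are compatible with $-\otimes_B C$ because the exact sequence itself is obtained from a short exact sequence of complexes.

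For the inductive step, suppose $\FF = \FF_1 \circ \cdots \circ \FF_n$. Using induction applied to $\FF' = \FF_2\circ \cdots \circ \FF_n$ I may assume the existence of $\widehat{\FF'}$ on $Mod(C)$ with a functorial isomorphism $\widehat{\FF'}(M\otimes_B C) \cong \FF'(M)\otimes_B C$. Now either $\FF_1 = H^i_Y(-)$, in which case I put $\widehat{\FF} := H^i_{\widehat{Y}}\circ \widehat{\FF'}$ and chain the two isomorphisms, or $\FF_1$ is the kernel, image or cokernel of an arrow appearing in the long exact sequence for a pair $Y_2 \subseteq Y_1$ of closed subsets. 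In the latter case the arrow is a natural transformation between functors of the form $H^i_{Y_j}(-)$, and the preimage construction produces a natural transformation between the functors $H^i_{\widehat{Y_j}}(-)$ that corresponds, via the base case isomorphism, to the original transformation tensored with $C$. Since $C$ is $B$-flat, the functor $-\otimes_B C$ commutes with kernels, images and cokernels, so applying the relevant operation on the $C$-side yields the desired $\widehat{\FF_1}$ and an isomorphism $\widehat{\FF_1}(N) \cong \FF_1(-) \otimes_B C$ evaluated at $N = \widehat{\FF'}(M\otimes_B C)$.

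Composing this with the inductive isomorphism gives the required
\[
\widehat{\FF}(M\otimes_B C) \;\cong\; \FF(M)\otimes_B C,
\]
and naturality in $M$ is inherited stagewise from the naturality established at each previous step.

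The main obstacle is purely bookkeeping: one must check that each natural transformation appearing in the long exact sequence for $(Y_1,Y_2)$ is sent, under the base change isomorphism, to the corresponding transformation for $(\widehat{Y_1},\widehat{Y_2})$ tensored with $C$. This reduces to the observation that the \v{C}ech complex computing $H^i_I$ base changes to the \v{C}ech complex computing $H^i_{IC}$, and that the connecting homomorphism in the Mayer--Vietoris-type sequence is induced by an honest map of complexes, which then commutes with the exact functor $-\otimes_B C$. Once this compatibility is in hand, the inductive construction goes through without further difficulty.
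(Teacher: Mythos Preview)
The paper does not supply its own proof of this proposition; it simply records it as a known result, citing \cite[3.1]{Lyu-1}. Your argument is correct and is essentially the standard one: flat base change for local cohomology via the \v{C}ech complex handles the functors $H^i_Y(-)$ and the arrows in the long exact sequence, exactness of $-\otimes_B C$ handles kernels, images and cokernels, and induction on the length of the composition assembles the general case. There is nothing to compare against in the paper itself.
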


\s  \textbf{Graded Lyubeznik functors:} \\
Let $B$ be a commutative Noetherian ring and let $R = B[X_1,\ldots, X_m]$ be standard graded.
We say $Y$ is \textit{homogeneous }closed subset of $\text{Spec}(R)$ if 
$Y= V(f_1, \ldots, f_s)$, where $f_i's$ are homogeneous polynomials in $R$.

We say $Y$ is a homogeneous locally closed subset of $\text{Spec}(R)$ if $Y=Y''-Y'$, where $Y', Y''$ 
are homogeneous closed subset of $\text{Spec}(R)$.  Let $\ ^*Mod(R)$ be the category of graded $R$-modules. 
We have an exact sequence of  functors on $\ ^*Mod(R)$,
\begin{equation}\label{eq1} H_{Y'}^i(-) \longrightarrow H_{Y''}^i(-) \longrightarrow H_{Y}^i(-) \longrightarrow 
H_{Y'}^{i+1}(-).
\end{equation}

\begin{definition}\label{defn-grade-Lyu-functor}
\textit{A graded Lyubeznik functor} $\mathcal{T}$ is a composite functor of the form 
$\mathcal{T}= \mathcal{T}_1\circ\mathcal{T}_2 \circ \ldots\circ\mathcal{T}_k$, where 
each $\mathcal{T}_j$ is either $H_{Y_j}^i(-)$, where $Y_j$ is a homogeneous locally closed subset of $\text{Spec}(R)$
or the kernel of any arrow appearing in (\ref{eq1}) with $Y'=Y_j'$ and $Y''= Y_j''$, where $Y_j' \subset Y_j''$ are two homogeneous  closed subsets of $\text{Spec}(R)$.
\end{definition}

\s \label{std-op} \textit{Graded Lyubeznik functors \wrt \ some standard operations on $B$.}

(a) Let $S$ be multiplicatively closed set in $B$. Set 
$$R_W = R\otimes_B B_W = B_W[X_1,\ldots, X_m].$$
If $Y$ is a 
homogeneous closed subset of $\text{Spec}(R)$, say
$Y= V(f_1, \ldots, f_s)$, set $Y_W = V(f_1/1, \ldots, f_s/1)$  where $f_i/1$ is the image of $f_i$ in $ R_W$.
We note that
$Y_W$ is a homogeneous closed subset of $\Spec(R_W)$. Furthermore it is clear that we have a homogeneous isomorphism
$H^i_Y(-)\otimes_B B_W \cong H^i_{Y_W}(-)$. If $Y$ is a homogeneous locally closed subset of $\Spec(R)$, say  $Y = Y'' - Y'$. set 
$Y_W = Y''_W - .Y'_W$, a homogeneous locally closed subset of $\Spec(R_W)$. Furthermore localizing (\ref{eq1})
at $W$ yields a
homogeneous isomorphism $H^i_{Y_W}(-) \cong H^i_Y(-)\otimes_B B_W$. More generally if $\FF$ is a graded Lyubeznik functor
on $R$ then $\FF\otimes_B B_W$ is a graded Lyubeznik functor on $ \ ^*Mod(R_W)$.

(b) Assume $B$ is local with maximal ideal $\m$. Let $\widehat{B}$ be the completion of $B$ \wrt \ $\m$. Set 
$$\widehat{R} = R \otimes_B \widehat{B} = \widehat{B}[X_1,\ldots, X_m].$$
If $Y$ is a 
homogeneous closed subset of $\text{Spec}(R)$, say
$Y= V(f_1, \ldots, f_s)$, set $\widehat{Y} = V(\widehat{f_1}, \ldots, \widehat{f_s})$  where $\widehat{f_i}$ is the image of
$f_i$ in $\widehat{R}$. We note that
$\widehat{Y}$ is a homogeneous closed subset of $\Spec(R_W)$. Furthermore it is clear that we have a homogeneous isomorphism
$H^i_Y(-)\otimes_B \widehat{B} \cong H^i_{\widehat{Y}}(-)$. If $Y$ is a homogeneous locally closed subset of
$\Spec(R)$, say  $Y = Y'' - Y'$. set 
$\widehat{Y} = \widehat{Y''} - \widehat{Y'}$, a homogeneous locally closed subset of $\Spec(\widehat{R})$. Furthermore applying the functor $(-)\otimes_B \widehat{B}$ to (\ref{eq1})
 yields a
homogeneous isomorphism $H^i_{\widehat{Y}}(-) \cong H^i_Y(-)\otimes_B \widehat{B}$. More generally if $\FF$ is a graded Lyubeznik functor
on $R$ then $\FF\otimes_B \widehat{B}$ is a graded Lyubeznik functor on $ \ ^*Mod(\widehat{R})$.

\s \textbf{Weyl Algebra's:} \\
Let $\Gamma$ be a ring (not necessarily commutative). The first Weyl algebra over $\Gamma$ is denoted by 
$A_1(\Gamma)$ and it is 
the ring $\Gamma<x,y>/(xy - yx -1)$.  Alternatively we can consider the polynomial ring $\Gamma[X]$ and let 
$\delta$ be the derivation on $\Gamma[X]$ defined by formal differentiation with respect to $X$ (treating elements of
$\Gamma$ as constants):
\[
 \delta\left(\sum \gamma_i X^i \right)  = \sum i \gamma_i X^{i-1}.
\]
It can be easily shown that the differential polynomial ring $\Gamma[X][Y; \delta]$ is isomorphic to the first Weyl algebra
over $\Gamma$, see \cite[p.\ 12]{Lam}. We note that  $YX = XY + \delta(X) = XY + 1$. Thus $Y$ can be thought as $\partial/\partial X$. The identification
of $A_1(\Gamma)$ as $\Gamma[X][Y; \delta]$ also yields that a $\Gamma$ basis of $A_1(\Gamma)$ is given by
\[
 \{ X^i Y^j \colon i,j \geq 0 \} \ \quad \text{as well as by} \quad \{ Y^jX^i \colon  i,j \geq 0 \}. 
\]

\noindent It follows that if $Z(\Gamma)$, the center of $\Gamma$,
contains a field $k$ then $A_1(\Gamma) = \Gamma \otimes_k A_1(k)$.

The higher Weyl algebra's are defined inductively as $A_m(\Gamma) = A_1(A_{m-1}(\Gamma)$.
\textit{For us the ring $\Gamma$  will always contain a field $K$ of characteristic zero in its center.} So
$A_m(\Gamma) = \Gamma \otimes_K A_m(K)$. The description of $A_m(K)$ can be found in \cite[Chapter 1]{Bjork}.

\s \textbf{ Koszul homology:} 
Let $\Gamma$ be a not-necessarily commutative, $\ZZ$-graded ring. We assume that $\Gamma$  has
a commutative field $K$ in its center with $\deg K = 0$
 Let $u_1,\ldots, u_c$ be homogeneous commuting 
elements in $\Gamma$. Consider the (commutative) subring 
 $S = K[u_1,\ldots, u_l]$ of $\Gamma$.  
 
Let $M$ be a graded $\Gamma$-module. 
Let  $H_i(u_1,\ldots, u_c; M)$ be the $i^{th}$ Koszul homology module of $M$ 
with respect to $u_1,\ldots, u_c $. It is clearly a graded $S$-module. 
(with its natural grading). The following result is well-known.
\begin{lemma}\label{exact}
Let $ \mathbf{u} = u_r,  u_{r+1}, \ldots, u_c$ and let $\mathbf{u^\prime}  = u_{r+1}, \ldots, u_c$. 
 For each $i\geq 0$ there exists an exact sequence of graded $S$-modules.

$$ 0 \rightarrow H_0(u_r; H_i(\mathbf{u^\prime}; M))\rightarrow H_i( \mathbf{u}; M) 
\rightarrow H_1(u_r; H_{i-1}(\mathbf{u^\prime} ; M)) \rightarrow 0.$$
\end{lemma}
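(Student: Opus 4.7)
The plan is to realize the Koszul complex $K_\bullet(\mathbf{u}; M)$ as the total complex of a tensor product of Koszul complexes, or equivalently as the mapping cone of multiplication by $\pm u_r$ on $K_\bullet(\mathbf{u}'; M)$. Because $u_r$ commutes with the other $u_i$ (and commutes with itself), the standard recipe from the commutative case goes through even though $\Gamma$ is noncommutative: left multiplication by $u_r$ on $M$ commutes with the differentials of $K_\bullet(\mathbf{u}'; M)$ because those differentials involve only the $u_i$ with $i > r$, which commute with $u_r$.

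First I would fix the isomorphism of complexes
\[
K_\bullet(\mathbf{u}; M) \ \cong \ \operatorname{Cone}\!\bigl(K_\bullet(\mathbf{u}'; M) \xrightarrow{\ \pm u_r\ } K_\bullet(\mathbf{u}'; M)\bigr),
\]
viewed with its natural grading induced from the homogeneity of the $u_i$. This in turn yields a short exact sequence of graded complexes
\[
0 \xrightarrow{\ \ } K_\bullet(\mathbf{u}'; M) \xrightarrow{\ \ } K_\bullet(\mathbf{u}; M) \xrightarrow{\ \ } K_\bullet(\mathbf{u}'; M)[-1] \xrightarrow{\ \ } 0.
\]
Taking the associated long exact sequence in homology produces
\[
\cdots \xrightarrow{} H_i(\mathbf{u}'; M) \xrightarrow{\ \pm u_r\ } H_i(\mathbf{u}'; M) \xrightarrow{} H_i(\mathbf{u}; M) \xrightarrow{} H_{i-1}(\mathbf{u}'; M) \xrightarrow{\ \pm u_r\ } H_{i-1}(\mathbf{u}'; M) \xrightarrow{} \cdots,
\]
where the key point I must verify is that the connecting map is (up to sign) multiplication by $u_r$; this is the standard computation for the cone, and it respects the grading.

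Finally I would chop this long exact sequence into three-term pieces around $H_i(\mathbf{u}; M)$. The cokernel of $u_r$ acting on $H_i(\mathbf{u}'; M)$ is exactly $H_0(u_r; H_i(\mathbf{u}'; M))$, and the kernel of $u_r$ acting on $H_{i-1}(\mathbf{u}'; M)$ is exactly $H_1(u_r; H_{i-1}(\mathbf{u}'; M))$. Substituting these identifications yields the desired short exact sequence, and all maps are homogeneous of degree $0$ since the $u_i$ are homogeneous and the construction respects the $\mathbb{Z}$-grading.

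The only real obstacle is the bookkeeping caused by the noncommutativity of $\Gamma$: one must check that the Koszul complex is well-defined (so that the differentials square to zero), that left multiplication by $u_r$ is indeed a chain map on $K_\bullet(\mathbf{u}'; M)$, and that the identification of the connecting homomorphism with $\pm u_r$ survives. All three reduce to the hypothesis that $u_1,\ldots,u_c$ commute pairwise in $\Gamma$, so there is no genuine difficulty — merely the care needed to track signs and the graded structure.
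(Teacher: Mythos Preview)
Your proof is correct and is the standard argument for this well-known fact. The paper does not actually supply a proof of this lemma; it simply states the result as well-known, so there is nothing to compare against beyond noting that your mapping-cone argument is precisely the usual one.
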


\s \textit{Examples}
\begin{enumerate}
 \item $\Gamma = A_m(K)$, the $m^{th}$-Weyl algebra over $K$. The operators $\partial_1,\cdots, \partial_m$ commute with each other.
 In this case $H_i(\bP; M)$ is usually called the $i^{th}$ de Rham homology  of $M$.
 \item $\Gamma = A_m(K)$, the $m^{th}$-Weyl algebra over $K$. The elements $X_1,\ldots, X_m$ commute with each other.
 We can consider the usual Koszul homology modules $H_i(\bX, M)$.
\end{enumerate}

\s \label{inj-koszul} We will recall the following computation of Koszul homology
 which we need. Let $A = K[[Y_1,\ldots, Y_d]]$. Let $E$ be the injective hull of $K$ as an $A$-module.
 Then 
 \[
  H_\nu(\bY, E) = \begin{cases} 
                   K & \textit{if} \ \nu = d \\
                   0 & \textit{otherwise}.
                  \end{cases}
                  \]
Although we believe that this result is already known, we give sketch of a  proof for the benefit of the reader.
Set $A_i = K[[Y_1,\ldots, Y_i]]$ and $E_i$ to be the injective hull of $K$ as an $A_i$-module.
We have an exact sequence 
\[
 0 \rt A_i \xrightarrow{Y_i} A_i \rt A_{i-1} \rt 0
\]
Taking Matlis dual's \wrt \ $E_i$ yields
\[
 0 \rt E_{i-1} \rt E_i \xrightarrow{Y_i} E_i \rt 0.
\]
Now an easy induction of Koszul homology ( compute $H_j(Y_i, Y_{i+1},\cdots, Y_d; E_d)$) yields the result.

\s \label{min-loc} We will use the following well-known result often. Let $B$ be a Noetherian ring and let $M$ be an 
$A$-module not necessarily finitely generated. Let $P$ be a minimal prime of $M$. Then the $B_P$-module $M_P$ has a natural structure
of an $\widehat{B_P}$-module (here $\widehat{B_P}$ is the completion of $B_P$ \wrt \ it's maximal ideal $PB_P$. In fact
$M_P \cong  M_P\otimes_{B_P}\widehat{B_P}$.

\section{Generalized Eulerian modules}
The hypotheses in this section is a bit involved. So we will carefully state it.

\s \label{setup-gen-eul} \emph{Setup:}  In this section 
\begin{enumerate}
 \item $B$ is a commutative Noetherian ring containing a field $K$ of characteristic zero.
 \item
 We also assume that there  is a not necessarily commutative ring $\Lambda$ containing $B$ such
that $K \subseteq Z(\Lambda)$, the center of $\Lambda$. Furthermore we assume that
\begin{enumerate}
 \item $\Lambda$ is free both as a left $B$-module and as a right $B$-module.
 \item $N = B$ is a left $\Lambda$-module such that if we restrict the $\Lambda$ action on $N$ to $B$ we get the usual
 action of $B$ on $N$.
\end{enumerate}
\item
Set $R = B[X_1,\ldots, X_m]$. We consider $R$ graded with $\deg B = 0$ and $\deg X_i = 1$ for all $i$.
\item
Set $\Gamma = \Lambda[X_1,\ldots, X_m]$. We consider $\Gamma$ graded with $\deg \Lambda = 0$ and $\deg X_i = 1$ for
all $i$. Notice
\begin{enumerate}
 \item $R$ is a graded subring of $\Gamma$.
 \item $\Gamma$ is free both as a left and right $R$-module.
 \item $N^\prime  = R$ is a graded left $\Gamma$-module such that if we restrict the $\Gamma$ action on $N^\prime$ to $R$ we get the usual
 action of $R$ on $N^\prime$.
\end{enumerate}
\item
We also assume that for each homogeneous ideal $I$ of $R$ and a graded $\Gamma$-module $E$ the set
$H^0_I(E)$ is a graded $\Gamma$-submodule of $E$. Here
\[
 H^0_I(E) = \{ e \in E \mid I^s e = 0 \ \text{for some} \ s \geq 1 \}.
\]
\item
Let $\D_m$ be the $m^{th}$-Weyl algebra over $\Lambda$. Note $\D_m = \Lambda \otimes_K A_m(K)$ where $A_m(K)$ is the $m^{th}$-Weyl algebra
over $K$. We can consider $\D_m$ graded by giving
$\deg \Lambda = 0$, $\deg X_i = 1$ and $\deg \partial_i = -1$. Notice
\begin{enumerate}
 \item $\Gamma$ is a graded subring of $\D_m$.
 \item $\D_m$ is free both as a left and right module over $\Gamma$. Thus $\D_m$ is free both as a left and right  $R$-module.
 \item The operators $\partial_i$ act as derivations on $R$. It follows that $R$ is a graded $\D_m$-module.
\end{enumerate}
\end{enumerate}

\s \label{ex-gen-eul} \textit{Examples where our hypotheses \ref{setup-gen-eul} hold:} 

\begin{enumerate}
 \item $K = B = \Lambda$.
 \item $B \neq K$ but $\Lambda = B$.
 \item $B = K[[Y_1,\ldots, Y_d]]$ and  $\Lambda = D_K(B)$ is the ring of $K$-linear differential operators on $B$, i.e.,
 $\Lambda = B<\delta_1, \cdots, \delta_d>$ where $\delta_j = \partial/\partial Y_j$. We have to verify hypotheses (5) of \ref{setup-gen-eul}.
 We note that in the action of $\Gamma$ on $R$ the elements $\delta_j$ act as derivations on $R$.
 Let $I$ be a homogeneous ideal of $R$ and let $E$ be a graded $\Gamma$-module. We note that
 $H^0_I(E)$ is a graded $R$-submodule
 of $E$. Let $e \in H^0_I(E)$. Say $I^s e = 0$.
 \begin{enumerate}
  \item We first show that $\delta_j e \in H^0_I(E)$. We claim $I^{s+1}\delta_j e = 0$. Let $u \in I^{s+1}$. Notice
  $u \delta_j = \delta_j u + \delta_j(u)$. We note that $\delta_j(u) \in I^s$. Thus $u \delta_j e = 0$. So
  our claim is true.
  \item
  We now note that $\Gamma$ is a free left $R$-module with generators 
  $$\{ \delta_1^{i_1} \delta_2^{i_2} \cdots \delta_d^{i_d} \mid i_1,i_2, \cdots, i_d \geq 0 \}.$$
  It follows that $H^0_I(E)$ is a $\Gamma$-submodule of $E$.
 \end{enumerate}

\end{enumerate}

\s \textit{Generalized Eulerian $\D_m$-modules:} \\
The Euler operator on $\D_m$, denoted by $\mathcal{E}_m$, is defined as 
$$ \E_m := \sum_{i=1}^m X_i\partial_i. $$
Note that $\deg \mathcal{E}_m  = 0$. Let $E$ be a graded $\D_m$-module. If $e \in E$ is homogeneous element, set $|e|= \deg e$.
\begin{definition}
 Let $E$ be a graded $\D_m$-module  Then $E$ is said to be  \textit{Eulerian} if    for each homogeneous element $e$
 of $E$.
$$ \mathcal{E}_m e= |e|\cdot e.$$ 
\end{definition}
We note that $R$ is an Eulerian $\D_m$-module.
\begin{definition}
A graded $\D_m$-module $M$ is said to be \textit{generalized Eulerian} if for each homogeneous element $e$ of $E$
there exists a positive integer $a$ (depending on $e$) such that
$$ (\mathcal{E}_m- {|e|})^a \cdot e = 0.$$
\end{definition}

The main result of this section is
\begin{theorem}\label{gen-eul-Lyu}
 Let $\FF$ be a graded Lyubeznik functor on $ \ ^* Mod(R)$. Then $\FF(R)$ is a generalized Eulerian $\D_m$-module.
\end{theorem}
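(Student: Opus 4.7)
The plan is to prove by induction on the length $k$ of the composition that if $\FF = \FF_1 \circ \cdots \circ \FF_k$ is a graded Lyubeznik functor on $\ ^*Mod(R)$, then $\FF(R)$ is a generalized Eulerian $\D_m$-module. Since $R$ itself is Eulerian (the classical Euler identity gives $\E_m \cdot f = |f| f$ for every homogeneous $f \in R$), it is enough to show that each basic building block --- namely $H^i_Y(-)$ for homogeneous locally closed $Y$, and the kernel, image, or cokernel of an arrow in (\ref{eq1}) --- sends a generalized Eulerian $\D_m$-module to a generalized Eulerian $\D_m$-module.

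The easy closure properties come first. Given a short exact sequence $0 \to M' \to M \to M'' \to 0$ of graded $\D_m$-modules with degree-preserving maps, with $M'$ and $M''$ generalized Eulerian, $M$ is generalized Eulerian: for a homogeneous $e \in M$, choose $a_1$ with $(\E_m - |e|)^{a_1}\bar e = 0$ in $M''$, so $(\E_m - |e|)^{a_1}e \in M'$; then choose $a_2$ killing this element, so that $(\E_m - |e|)^{a_1 + a_2}e = 0$. The same argument shows that kernels, images, and cokernels of degree-preserving $\D_m$-linear maps between generalized Eulerian modules remain generalized Eulerian.

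The main step, and the place I expect the real work, is closure under homogeneous localization: if $M$ is generalized Eulerian and $f \in R$ is homogeneous, then $M_f$, with its natural induced $\D_m$-structure, is generalized Eulerian. Using $\partial_i(m/f^n) = \partial_i(m)/f^n - n\,\partial_i(f)\,m/f^{n+1}$ together with $\E_m f = |f| f$ in $R$, a direct calculation gives the key identity
\[
(\E_m - |m/f^n|)(m/f^n) \;=\; \bigl((\E_m - |m|)m\bigr)/f^n,
\]
which iterates to $(\E_m - |m/f^n|)^a (m/f^n) = ((\E_m - |m|)^a m)/f^n$ for every $a \geq 0$. Choosing $a$ so large that $(\E_m - |m|)^a m = 0$ in $M$ yields the conclusion. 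The underlying reason is the commutator identity $[\E_m, f] = |f| f$, so that $(\E_m - \alpha)$ and $f$ almost commute up to a constant shift.

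With these ingredients the theorem follows. For a homogeneous closed subset $Y = V(f_1,\ldots,f_s)$, the modules $H^i_Y(M)$ arise as cohomology of the \v{C}ech complex on $f_1, \ldots, f_s$, whose terms are iterated homogeneous localizations of $M$ and whose differentials are degree-preserving and $\D_m$-linear; thus each $H^i_Y(M)$ is generalized Eulerian. The long exact sequence (\ref{eq1}) then upgrades this to homogeneous locally closed $Y$ via the extension-closure property above, and simultaneously shows that kernels, images, and cokernels of its arrows are generalized Eulerian. An induction on $k$, starting from the Eulerian module $R$, completes the proof, with the localization step being the one genuine obstacle, handled by the explicit computation above in the spirit of \cite{PS}.
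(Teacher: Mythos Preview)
Your proposal is correct and follows essentially the same route as the paper: closure of generalized Eulerian modules under extensions and subquotients (Property~\ref{property1}), closure under homogeneous localization (Property~\ref{property4}, for which you supply the explicit identity $(\E_m - |m/f^n|)(m/f^n) = ((\E_m - |m|)m)/f^n$), the \v{C}ech complex to handle $H^i_I(-)$ for homogeneous closed supports (Remark~\ref{hom-gen-eul}), and the long exact sequence for locally closed supports, all assembled by induction on the length of the composition. The only structural difference is that the paper first isolates in Lemma~\ref{BHU} the fact that $\FF(R)$ carries a graded $\D_m$-module structure, proving this via $^*$-injective resolutions (using that a $^*$-injective $\D_m$-module is $^*$-injective over $R$), and only afterwards verifies the generalized Eulerian condition; you instead obtain both the $\D_m$-structure and the Eulerian property in one stroke from the \v{C}ech complex, which is slightly more economical but leaves implicit the check that the arrows in the long exact sequence~(\ref{eq1}) are $\D_m$-linear.
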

The first step in proving Theorem \ref{gen-eul-Lyu} is that $\FF(R)$ is a graded $\D_m$-module. Notice that $\FF(R)$ is a graded $R$-module.
So we have to show both that $\FF(R)$ is a $\D_m$-module and that this action is compatiable with the grading on $\FF(R)$.
We isolate this fact as a seperate
\begin{lemma}\label{BHU}
 [with hypotheses as in Theorem \ref{gen-eul-Lyu}] $\FF(R)$ is a graded  $\D_m$-module.
\end{lemma}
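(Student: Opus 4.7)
The plan is to verify that $\FF(R)$ carries a graded $\D_m$-module structure by building it up from the basic constituents of the Lyubeznik functor. Since $\FF$ is by definition a composition of functors $H^i_Y(-)$ (for homogeneous locally closed $Y$) together with kernels/images/cokernels of morphisms in the long exact sequence, and since kernels, images, and cokernels of graded $\D_m$-linear maps between graded $\D_m$-modules are again graded $\D_m$-modules, it suffices by induction to check the claim when $\FF = H^i_Y(-)$ for a homogeneous locally closed subset $Y$ of $\Spec(R)$.

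The first and most substantial step is to show that if $f \in R$ is a homogeneous element and $M$ is a graded $\D_m$-module satisfying hypothesis (5) of \ref{setup-gen-eul}, then the localization $M_f$ inherits a natural graded $\D_m$-module structure extending the given one. The action of $X_i$ is just multiplication, which clearly extends. The action of each $\partial_i$ extends by the formal Leibniz rule $\partial_i(m/f^n) = (f \cdot \partial_i(m) - n \, \partial_i(f) \cdot m)/f^{n+1}$, well-defined because $\partial_i$ acts as a derivation on $R$ by hypothesis (6c). For elements of $\Lambda$ (which by hypothesis (4c) act on $R$ and more generally on graded $\Gamma$-modules), one checks case by case that the action extends to the localization: in the three examples of \ref{ex-gen-eul} this is either vacuous ($\Lambda = K$), trivial multiplication ($\Lambda = B$), or again the Leibniz rule ($\Lambda = D_K(B)$). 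Grading is preserved because $1/f$ has degree $-|f|$, $\partial_i$ has degree $-1$, $X_i$ has degree $+1$, and $\Lambda$ has degree $0$. Finally one verifies the defining relations of $\D_m$ hold on $M_f$, which reduces to straightforward (if tedious) computation with the Leibniz rule.

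Second, for a closed homogeneous subset $Y = V(f_1,\dots,f_s)$ with $f_i$ homogeneous, compute $H^i_Y(R)$ via the \v{C}ech complex $\check{C}^\bullet(\mathbf{f}; R)$. By step one, each term $R_{f_{i_1}\cdots f_{i_k}}$ is a graded $\D_m$-module, and the \v{C}ech differentials (which are alternating sums of canonical localization maps) are graded $\D_m$-linear. Hence the cohomology $H^i_Y(R)$ is a graded $\D_m$-module. For a homogeneous locally closed $Y = Y'' \setminus Y'$, the long exact sequence (\ref{eq1}) is a sequence of graded $\D_m$-modules with $\D_m$-linear maps, as the connecting morphisms arise from the snake lemma applied to a short exact sequence of \v{C}ech complexes consisting of graded $\D_m$-modules. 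Thus $H^i_Y(R)$ is again a graded $\D_m$-module.

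The main obstacle in the argument is verifying step one — specifically, that the full $\Gamma$-action (and hence the $\Lambda$-component of the $\D_m$-action) extends to the localization $M_f$ in a way that remains compatible with the commutation relations of $\D_m$. Once this is in place, the rest is a mechanical bookkeeping exercise: steps two and three combine with the closure properties of graded $\D_m$-modules under kernels, images, cokernels and composition to deliver the statement for arbitrary graded Lyubeznik functors.
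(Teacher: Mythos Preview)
Your approach is correct in spirit but takes a genuinely different route from the paper, and there is a generality issue worth flagging.

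The paper does not use the \v{C}ech complex. Instead it proves an auxiliary result (Proposition~\ref{inj}): any $^*$-injective left $\D_m$-module is $^*$-injective as an $R$-module (because $\D_m$ is free as a right $R$-module, so $\D_m\otimes_R-$ is exact). Given a graded $\D_m$-module $M$, the paper then takes a $^*$-injective resolution $\mathbb{E}$ of $M$ \emph{as a $\D_m$-module}; by the proposition this is also a $^*$-injective $R$-resolution, so $H^i_I(M)=H^i(H^0_I(\mathbb{E}))$. The point is that $H^0_I$ carries graded $\D_m$-modules to graded $\D_m$-modules: hypothesis~(5) of \ref{setup-gen-eul} gives the $\Gamma$-structure, and the relation $u\partial_j=\partial_j u-\partial_j(u)$ handles the Weyl part. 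The locally closed case is then obtained from the short exact sequence $0\to H^0_I(\mathbb{E})\to H^0_J(\mathbb{E})\to\mathbb{L}\to 0$ of $\D_m$-complexes.

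The advantage of the paper's argument is that it works uniformly under the abstract hypotheses of \ref{setup-gen-eul}: it only needs that $H^0_I$ preserves $\Gamma$-submodules (hypothesis~(5)) and that $\D_m$ is free over $R$. Your \v{C}ech approach, by contrast, requires that the $\Lambda$-action extend to localizations $M_f$; as you yourself note, you can only verify this case by case for the three examples of \ref{ex-gen-eul}, not for an arbitrary $\Lambda$ satisfying \ref{setup-gen-eul}. So as a proof of the Lemma \emph{as stated} (under the general setup), your argument has a gap. For the applications in the paper---where $\Lambda$ is either $B$ or a ring of differential operators---your approach is fine, and indeed the paper itself later invokes the \v{C}ech complex (Remark~\ref{hom-gen-eul}) to propagate the generalized Eulerian property. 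The injective-resolution argument buys generality and avoids having to check compatibility of the $\Lambda$-action with localization; your argument is more hands-on but tied to the specific examples.
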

\begin{remark}
 If $\Sc = \bigoplus_{n \in \ZZ} \Sc_n$ is a graded  but not-necessarily commutative ring then the category $\ ^* Mod(\Sc)$
 of graded left $\Sc$-modules has enough injectives (the proof given in Theorem 3.6.3 of \cite{BH} in the case
 $\Sc$ is commutative extends to the non-commutative case).
\end{remark}
The following result is an essential ingredient in proving Lemma \ref{BHU}.
\begin{proposition}\label{inj}
Let $V$ be a $ \ ^* $-injective left $\D_m$-module. Then $V$ is a $ \ ^* $-injective $R$-module.
 \end{proposition}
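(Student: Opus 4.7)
The plan is to transfer $^*$-injectivity from $\D_m$ to $R$ by means of the induction-restriction adjunction between $\ ^*Mod(R)$ and $\ ^*Mod(\D_m)$. Three ingredients are needed: first, that the induction functor $\D_m \otimes_R -$ from $\ ^*Mod(R)$ to $\ ^*Mod(\D_m)$ is exact; second, that there is a natural graded isomorphism
\[
\ ^*\Hom_{\D_m}(\D_m \otimes_R M,\, V) \ \cong \ \ ^*\Hom_R(M,\, V)
\]
for every graded $R$-module $M$; and third, that $\ ^*\Hom_{\D_m}(-,V)$ is exact on short exact sequences in $\ ^*Mod(\D_m)$, which is exactly the hypothesis that $V$ is $^*$-injective over $\D_m$.

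For the first ingredient, I view $\D_m$ as a graded $(\D_m, R)$-bimodule via the chain of graded subrings $R \hookrightarrow \Gamma \hookrightarrow \D_m$ from \ref{setup-gen-eul}. By \ref{setup-gen-eul}(6)(b), $\D_m$ is free as a right $R$-module; concretely, $\D_m$ has the homogeneous right $\Gamma$-basis $\{\partial_1^{i_1}\cdots\partial_m^{i_m}\}$ (the monomial of multi-index $(i_1,\ldots,i_m)$ having degree $-(i_1+\cdots+i_m)$), and combining this with a homogeneous right $R$-basis of $\Gamma$ — which exists by \ref{setup-gen-eul}(4)(b) since $\Lambda$ is free over $B$ and sits in degree $0$ — yields a homogeneous right $R$-basis of $\D_m$. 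Consequently $\D_m \otimes_R -$ is exact and sends graded $R$-modules to graded $\D_m$-modules.

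For the second ingredient, this is the standard extension-of-scalars / restriction-of-scalars adjunction applied to the bimodule $\D_m$; the homogeneity of the inclusion $R \hookrightarrow \D_m$ and of the chosen right $R$-basis makes the adjunction isomorphism preserve the natural $\ZZ$-gradings, so it descends to $\ ^*\Hom$.

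Putting the pieces together, given any short exact sequence $0 \to M' \to M \to M'' \to 0$ in $\ ^*Mod(R)$, applying $\D_m \otimes_R -$ preserves exactness by the first ingredient, applying $\ ^*\Hom_{\D_m}(-,V)$ preserves exactness by hypothesis, and identifying the result via the second ingredient with the sequence obtained by applying $\ ^*\Hom_R(-,V)$ to the original shows that $\ ^*\Hom_R(-,V)$ is exact. Hence $V$ is $^*$-injective as a graded $R$-module. The only real care required is the bookkeeping of gradings in the adjunction and the verification that the right $R$-basis of $\D_m$ can be chosen homogeneous; both are immediate from the setup \ref{setup-gen-eul}, so I do not anticipate any serious obstacle.
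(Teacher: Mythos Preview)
Your proof is correct and follows essentially the same approach as the paper: both use the tensor--Hom adjunction to identify $\ ^*\Hom_R(-,V)$ with $\ ^*\Hom_{\D_m}(\D_m\otimes_R -,V)$, then invoke that $\D_m$ is graded-free as a right $R$-module to get exactness of induction. You supply a bit more detail on the homogeneous right $R$-basis of $\D_m$, but the argument is the same.
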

\begin{proof}
 We note that
 \begin{align*}
  \ ^* \Hom_R(- , V) &= \ ^* \Hom_R \left(- , \ ^* \Hom_{\D_m}( \D_m , V) \right), \\
  &= \ ^* \Hom_{\D_m}( \D_m \otimes_R -, V).
 \end{align*}
As $\D_m$ is free as a  graded right $R$-module we have that $\D_m\otimes_R -$ − is an exact functor
from  $ \ ^* Mod(R)$  to $\ ^* Mod(\D_m)$. Also by hypothesis 
$V$ is a $\ ^*$-injective $\D_m$-module. It
follows that  $ \ ^* \Hom_R(- , V) $ is an exact functor. So $V$  is $\ ^*$-injective as a $R$-module.
\end{proof}

\s We now give
\begin{proof}[Proof of Lemma \ref{BHU}] 

\emph{Step-1:}  Let $I$ be a homogeneous ideal in $R$. Let $M$ be a graded left $\D_m$-module and let $i \geq 0$ be fixed.
Then $H^i_I(M)$, the $i^{th}$-local
cohomology module of $M$ with respect to $I$, has a 
canonical structure
of a graded left $\D_m$-module.

\textit{Proof of Step 1:} 

We first show that $H^0_I(M)$ is a graded $\D_m$-submodule of $M$.
By our assumption \ref{setup-gen-eul}(5), $H^0_I(M)$ is a graded $\Gamma = \Lambda[X_1,\ldots, X_m]$-submodule of $M$.
Let $e \in H^0_I(M)$. Say $I^s e = 0$.
 \begin{enumerate}
  \item We first show that $\partial_j e \in H^0_I(E)$. We claim $I^{s+1}\partial_j e = 0$. Let $u \in I^{s+1}$. Notice
  $u \partial_j = \partial_j u - \partial_j(u)$. We note that $\partial_j(u) \in I^s$. Thus $u \partial_j e = 0$. So
  our claim is true.
  \item
  We now note that $\D_m$ is a free left $\Gamma$-module with generators 
  $$\{ \partial_1^{i_1} \partial_2^{i_2} \cdots \partial_m^{i_m} \mid i_1,i_2, \cdots, i_m \geq 0 \}.$$
  It follows that $H^0_I(M)$ is a $\D_m$-submodule of $M$.
 \end{enumerate}

Let $\mathbb{E}$ be a $ \ ^*$-injective resolution of $M$ as $\D_m$-module. Then by Proposition \ref{inj}
 we get that $\mathbb{E}$ be a $ \ ^*$-injective resolution of $M$ as $R$-module.
 So $H^i_I(M) = H^i(H^0_I(\mathbb{E}))$. But as shown earlier  we get that
 $H^0_I(\mathbb{E})$ is complex of graded $\D_m$-modules. So $H^i_I(M)$ has a structure of a $\D_m$-module. Standard arguments
 yield that this structure is independent of the resolution $\mathbb{E}$ of $M$ (as a $\D_m$-module).
 
 \emph{Step-2:} 
 Let $Y$ be a homogeneous locally closed subset of $\text{Spec}(R)$ i.e., $Y=Y''-Y'$, where $Y', Y''$ 
are homogeneous closed subset of $\text{Spec}(R)$.  Say $Y' = V(I)$ and $Y'' = V(J)$ where $I, J$ are homogeneous ideals in $R$
 with $J \subseteq I$. Let $\ ^*Mod(R)$ be the category of graded $R$-modules. 
We have an exact sequence of  functors on $\ ^*Mod(R)$,
\begin{equation}\label{eq1-2} H_{Y'}^i(-) \longrightarrow H_{Y''}^i(-) \longrightarrow H_{Y}^i(-) \longrightarrow 
H_{Y'}^{i+1}(-).
\end{equation}
We note that $\ ^* Mod(\D_m)$ the category of graded left $\D_m$-modules is a subcategory of  $\ ^*Mod(R)$. 

\textit{Claim-1:} (\ref{eq1-2}) is an exact sequence of functors on  $\ ^* Mod(\D_m)$.

To see this let $M$ be a graded left $\D_m$-module.
Let $\mathbb{E}$ be a $ \ ^*$-injective resolution of $M$ as $\D_m$-module. Then by Proposition \ref{inj}
 we get that $\mathbb{E}$ be a $ \ ^*$-injective resolution of $M$ as $R$-module. We have an exact sequence of complexes
 \begin{equation}\label{d-loc-closed}
  0 \rt H^0_I(\mathbb{E}) \xrightarrow{\phi} H^0_J(\mathbb{E}) \rt \mathbb{L} \rt 0,
 \end{equation}
where $\phi$ is the canonical inclusion and $\mathbb{L}$ is the quotient complex. We note that $H^i_Y(M) = H^i(\mathbb{L})$.
As argued in Step-1,
$H^0_I(\mathbb{E})$ and $H^0_J(\mathbb{E})$ are $\D_m$ sub-complexes of $\mathbb{E}$. So $\mathbb{L}$ is a complex of $\D_m$-modules.
Taking cohomology gives the desired result. This proves Claim-1.

 To prove the assertion of Lemma  \ref{BHU},     it suffices to show that if $\mathcal{T}$ is a graded Lyubeznik functor
and $M$ is a graded  $\D_m$-module, then so is $\mathcal{T}(M)$. 
Since $\mathcal{T}= \mathcal{T}_1\circ \mathcal{T}_2 \circ \ldots \circ \mathcal{T}_s$, 
by induction it is enough to show that $\mathcal{T}_i(M)$ is a graded $\D_m$-module.
By Claim-1;  $\mathcal{T}_i(M)$ is a graded $\D_m$-submodule of $H_Y^i(M)$, where $Y$ is locally closed homogeneous
closed subset of $\text{Spec}(R)$. 
\end{proof}

The following properties of generalized Eulerian modules were proved in \cite{P2} in the case $B = K = \Lambda$.
The  proofs in \cite{P2} generalize in the present setup \ref{setup-gen-eul}.

\begin{property}[Proposition 2.1 in \cite{P2}] \label{property1} Let
$0 \rightarrow M_1 \overset{{\alpha_1}}{\rightarrow} M_2 \overset{{\alpha_2}}{\rightarrow} M_3 {\rightarrow} 0 $ be a
short exact sequence of graded $\D_m$-modules. Then $M_2$ is generalized Eulerian
 if and only if $M_1$ and $M_3$ are generalized Eulerian.
\end{property}

If $M$ is graded $\D_m$-module, then for $l\in \mathbb{Z}$ the modules $M(l)$ denotes the shift of $M$ by $l$; that 
is, $M(l)_n=M_{n+l}$ for all $n\in \mathbb{Z}.$
\begin{property}[Proposition 2.2 in \cite{P2}]\label{property2}
Let $M$ be a non-zero generalized Eulerian $\D_m$-module. Then the shifted module $M(l)$ is not a generalized 
Eulerian $\D_m$-module for $l \neq 0$.
\end{property}
We note that the proof of Proposition 2.2  in \cite{P2} uses the fact that $K$ is a field of characteristic zero and that 
$K \subseteq Z(\D_m)$, the center of $\D_m$.

In \cite{PS} the following result was proved in the case $B = K = \Lambda$. The same proof generalizes in the present setup \ref{setup-gen-eul}.
\begin{property}\label{property4}
Let $M$ be a nonzero generalized Eulerian $\D_m$-module.
Let $S$ be a multiplicatively closed set of homogeneous elements in $R$.
Then $S^{-1}M$ is also a generalized Eulerian $\D_m$-module. 
 In particular, $M_f$ is generalized Eulerian for each homogeneous polynomial $f \in R$.
\end{property}
\begin{remark}\label{hom-gen-eul}
Computing local cohomology with respect to Cech-complex, cf. \cite[5.1.19]{BS}, we immediately get that if $M$ is generalized Eulerian $\D_m$-module 
and if $I$ is a homogeneous ideal of $R$ then $H^i_I(M)$ is generalized Eulerian for all $i \geq 0$.
\end{remark}

As an easy consequence of Lemma \ref{BHU} and \ref{property4} we get
\begin{proof}[ Proof of Theorem \ref{gen-eul-Lyu}:]   
 By proof of Lemma \ref{BHU} we get that if $M$ is a graded $\D_m$-module and $\FF$ is a graded Lyubeznik functor then 
 $\FF(M)$ is a graded $\D_m$-module. To prove our result it suffices to prove that if $M$ is generalized Eulerian
 $\D_m$-module then so is $\FF(M)$. Since $\mathcal{T}= \mathcal{T}_1\circ \mathcal{T}_2 \circ \ldots \circ \mathcal{T}_s$, 
by induction it is enough to show that $\mathcal{T}_i(M)$ is a generalized Eulerian $\D_m$-module.

But $\mathcal{T}_i(M)$ is a graded $\D_m$-submodule of $H_Y^i(M)$, where $Y$ is locally closed homogeneous
closed subset of $\text{Spec}(R)$. 
By Property \ref{property1}, it  suffices to show that $H_Y^i(M)$ is generalized Eulerian. 
Let $Y= Y''-Y'$, where $Y', Y''$ are homogeneous closed sets of $\text{Spec}(R)$. 
Then by Claim-1 of Lemma \ref{BHU} we have an exact sequence of graded $\D_m$-modules
\begin{equation*} 
H_{Y'}^i(M) \longrightarrow H_{Y}^i(M) \longrightarrow H_{Y''}^{i+1}(M). 
\end{equation*}
By Remark \ref{hom-gen-eul}, $H_{Y'}^i(M)$ and $H_{Y''}^{i+1}(M)$ are generalized Eulerian $\D_m$-modules.
Thus by Property \ref{property1}, $H_Y^i(M)$ is generalized Eulerian $\D_m$-module.
 \end{proof}

 \s \label{koszul-gen-eul}In this subsection we assume that the hypotheses in \ref{ex-gen-eul}(3) holds.
 Set $B^\prime = K[[Y_1,\ldots, Y_{d-1}]]$, $\Gamma^\prime = D_K(B^\prime)$ the ring of $K$-linear differential operators
 on $B^\prime$. Set $\D_m^\prime = A_m(\Gamma^\prime)$. We note that $B^\prime, \Gamma^\prime$ and $\D_m^\prime$ 
 are subrings of $A, \Gamma$ and $\D_m$-respectively. We note that the Eulerian operator $\E_m$ of $\D_m^\prime$
 extends to the Eulerian operator on $\D_m$. The main result of this subsection is
 \begin{proposition}\label{kos-induct}[ with hypotheses as in \ref{koszul-gen-eul}]
  Let $E$ be a generalized Eulerian $\D_m$-module. Then the Koszul homology $H_l(Y_d, E)$ of $E$ \wrt \ $Y_d$
  is a generalized Eulerian $\D_m^\prime$-module for $l = 0, 1$.
 \end{proposition}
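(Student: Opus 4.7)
The plan is to verify that both $H_0(Y_d, E) = E/Y_d E$ and $H_1(Y_d, E) = (0:_E Y_d)$ carry natural graded $\D_m^\prime$-module structures, and then deduce the generalized Eulerian property directly from that of $E$, since the Eulerian operator is the same in $\D_m$ and $\D_m^\prime$.

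First I would establish the $\D_m^\prime$-module structure. Recall $\D_m^\prime = A_m(\Gamma^\prime)$ is generated as a $K$-algebra by $B^\prime$, the derivations $\delta_1, \ldots, \delta_{d-1}$, the indeterminates $X_1, \ldots, X_m$, and the operators $\partial_1, \ldots, \partial_m$. It suffices to check that the submodule $Y_d E$ of $E$ and the submodule $(0 :_E Y_d)$ of $E$ are each stable under every one of these generators. The verifications are immediate commutator computations: $Y_d$ is central in $B^\prime[X_1, \ldots, X_m]$, and in $\D_m$ one has $[\delta_i, Y_d] = \delta_i(Y_d) = 0$ for $i < d$ and $[\partial_j, Y_d] = 0$. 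The essential point is that $\delta_d$, which satisfies $[\delta_d, Y_d] = 1$ and would wreck stability, does \emph{not} belong to $\D_m^\prime$; this is precisely why we must pass from $\D_m$ to $\D_m^\prime$.

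Next, since $Y_d \in A$ has degree $0$, the multiplication map $Y_d \colon E \to E$ is homogeneous of degree $0$. Hence its image $Y_d E$ and kernel $(0:_E Y_d)$ are graded submodules, so $H_0(Y_d, E)$ and $H_1(Y_d, E)$ inherit compatible gradings making them graded $\D_m^\prime$-modules.

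Finally I would check the generalized Eulerian property. The Eulerian operator $\E_m = \sum_{i=1}^m X_i \partial_i$ lies in both $\D_m^\prime$ and $\D_m$ and acts identically. For $H_1$: any homogeneous $e \in (0:_E Y_d)$ of degree $n$ is also homogeneous in $E$ of degree $n$, so by hypothesis there exists $a \geq 1$ with $(\E_m - n)^a e = 0$ in $E$, and this equation persists in the submodule $H_1(Y_d, E)$. For $H_0$: given a homogeneous $\bar e \in E/Y_d E$ of degree $n$ represented by a homogeneous $e \in E$ of the same degree $n$, choose $a$ with $(\E_m - n)^a e = 0$ in $E$; reducing modulo $Y_d E$ gives $(\E_m - n)^a \bar e = 0$. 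This completes the argument.

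There is essentially no serious obstacle here; the statement is a formal consequence of the fact that $Y_d$ is a degree-zero element that commutes with every generator of $\D_m^\prime$. The only point requiring care is making the distinction between $\D_m$ and $\D_m^\prime$ explicit, and tracking that the degree of an element of $E$ is preserved under passage to $E/Y_d E$ or to $(0:_E Y_d)$.
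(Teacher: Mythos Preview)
Your proposal is correct and follows essentially the same approach as the paper: the paper simply asserts that multiplication by $Y_d$ is $\D_m^\prime$-linear (which is exactly the commutator computation you spell out) and then reads off the generalized Eulerian property on the submodule $H_1$ and the quotient $H_0$ just as you do. Your version is more explicit, in particular in isolating why one must drop $\delta_d$ when passing to $\D_m^\prime$, but there is no substantive difference in strategy.
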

\begin{proof}
 We note that the map $E \xrightarrow{Y_d} E$ is $\D_m^\prime$-linear. So it follows that $H_l(Y_d, E)$
 is a $\D_m^\prime$-module for $l = 0, 1$. Let $u \in H_1(Y_d, E)$ be homogeneous. Then as $u \in E_d$. So there exists $a \geq 1$
 such that $(\E_m - |u|)^a = 0$. It follows that  $H_1(Y_d, E)$ is generalized Eulerian. A similar argument yields
 that $H_0(Y_d, E)$ is a generalized Eulerian $\D_m^\prime$-module.
\end{proof}

 An easy induction  (and using \ref{property1})  yields the following result:
\begin{theorem}\label{kos-full-gen-eul}[ with hypotheses as in \ref{koszul-gen-eul}]
  Let $E$ be a generalized Eulerian $\D_m$-module. Then the Koszul homology $H_l(\bY, E)$ of $E$ \wrt \ $\bY = Y_1,\ldots, Y_d$
  is a generalized Eulerian $A_m(K)$-module for $l = 0, 1, \ldots, d$.
 \end{theorem}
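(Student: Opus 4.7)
The plan is to induct on $d$. The base case $d = 1$ is exactly Proposition \ref{kos-induct}, since in that case $\D_m^\prime = A_m(D_K(K)) = A_m(K)$.

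For the inductive step, assume the theorem for $d - 1$ Y-variables and let $E$ be a generalized Eulerian $\D_m$-module. First I would apply Lemma \ref{exact} to the reordered sequence $\mathbf{u} = Y_d, Y_1, \ldots, Y_{d-1}$ (which has the same Koszul homology as $\bY$) with $u_r = Y_d$ and $\mathbf{u}^\prime = Y_1,\ldots,Y_{d-1}$, producing the short exact sequence
\begin{equation*}
0 \to H_0(Y_d; H_i(Y_1,\ldots,Y_{d-1}; E)) \to H_i(\bY; E) \to H_1(Y_d; H_{i-1}(Y_1,\ldots,Y_{d-1}; E)) \to 0.
\end{equation*}
By Property \ref{property1} it suffices to show that the two outer terms are generalized Eulerian $A_m(K)$-modules.

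Next I would view $E$ as a module over $\D_m^{\prime} = A_m(D_K(K[[Y_1,\ldots,Y_{d-1}]]))$, a graded subring of $\D_m$ obtained by discarding $\delta_d$ and restricting scalars from $B$ to $B^{\prime}$. Because the Euler operator $\E_m = \sum_{i=1}^m X_i \partial_i$ and the grading are unchanged by this restriction, $E$ remains a generalized Eulerian $\D_m^{\prime}$-module. The inductive hypothesis applied to $E$ as a $\D_m^{\prime}$-module then gives that $H_j(Y_1,\ldots,Y_{d-1}; E)$ is generalized Eulerian as an $A_m(K)$-module for every $j \geq 0$. Moreover, since $Y_d$ commutes with every $X_i$ and every $\partial_i$, multiplication by $Y_d$ defines a graded, degree-zero, $A_m(K)$-linear endomorphism on $H_j(Y_1,\ldots,Y_{d-1}; E)$.

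Finally, a direct check shows that for any graded, degree-zero, $A_m(K)$-linear endomorphism $y$ of a generalized Eulerian $A_m(K)$-module $N$, both $H_1(y; N) = \ker y$ and $H_0(y; N) = \coker y$ are generalized Eulerian: the commutation $[\E_m, y] = 0$ lets the annihilating power $(\E_m - |u|)^a$ for any homogeneous $u \in N$ descend to the quotient and restrict to the submodule. Applying this with $y = Y_d$ shows that the outer terms of the short exact sequence above are generalized Eulerian $A_m(K)$-modules, and Property \ref{property1} then finishes the induction. The only subtle point in this scheme is choosing the right ordering in Lemma \ref{exact}, so that the inner Koszul homology $H_j(Y_1,\ldots,Y_{d-1}; E)$ is accessible by the inductive hypothesis while the outer variable $Y_d$ happens to commute with $A_m(K)$, making the outer Koszul step automatically preserve the generalized Eulerian property.
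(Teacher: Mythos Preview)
Your proof is correct. The paper's own argument is the one-line remark ``an easy induction (and using Property~\ref{property1})'', which is meant to iterate Proposition~\ref{kos-induct}: peel off $Y_d$ first to pass from a generalized Eulerian $\D_m$-module to a generalized Eulerian $\D_m'$-module, then $Y_{d-1}$, and so on, each time staying inside the framework of Proposition~\ref{kos-induct} and using Lemma~\ref{exact} together with Property~\ref{property1} to control the full Koszul homology. Your organization is the mirror image: you first restrict $E$ to $\D_m'$ and invoke the induction hypothesis to handle $Y_1,\ldots,Y_{d-1}$ all at once, landing already at the $A_m(K)$-level, and only then peel off $Y_d$ as a degree-zero $A_m(K)$-linear endomorphism. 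The final step you describe (kernel and cokernel of such an endomorphism are again generalized Eulerian) is nothing more than Property~\ref{property1} applied to a submodule and a quotient, so no new ingredient is needed. Both routes use exactly the same tools; the paper's ordering has the mild advantage that Proposition~\ref{kos-induct} is invoked uniformly at every step, while yours requires the small extra observation that restriction along $\D_m' \hookrightarrow \D_m$ preserves the generalized Eulerian condition (which, as you note, is immediate since $\E_m$ and the grading are unchanged).
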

 
 \begin{remark}
  We note that the notion of Eulerian $\D_m$-modules is NOT closed under extensions. So even if $E$ is an Eulerian $\D_m$-module
  the Koszul homology  $H_l(\bY, E)$ is only a generalized Eulerian $A_m(K)$-module. 
 \end{remark}

 \s \emph{Koszul homology, de Rham Cohomology of generalized Euelrian $A_m(K)$-modules}
 We recall three earlier results we proved:
 \begin{lemma}\label{sushil}
 Let $M$ be a generalized Eulerian $A_m(K)$-module. Then for $l = 0, 1$
 \begin{enumerate}[\rm (1)]
  \item  $H_l(\partial_m; M)(-1)$ is generalized Eulerian $A_{m-1}(K)$-module; see \cite[3.2]{P2}.
  \item $H_l(X_m, N)$  is generalized Eulerian $A_{m-1}(K)$-module; see \cite[5.3]{PS}.
  \item If $m = 1$ then for $l = 0, 1$,
  \begin{enumerate}[\rm (a)]
  \item 
  $H_l(\partial_1; M)$ is concentrated in degree $-1$, i.e., $H_l(\partial_1; M)_j = 0$ for $j \neq -1$; see \cite[3.5]{P2}.
  \item
  $H_l(X_1; M)$ is concentrated in degree $0$, i.e., $H_l(\partial_1; M)_j = 0$ for $j \neq 0$; see \cite[5.5]{PS}.
  \end{enumerate}
 \end{enumerate}
 \end{lemma}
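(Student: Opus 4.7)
The plan is to prove each part by direct computations exploiting the identity $\E_m = \E_{m-1} + X_m\partial_m$ together with the Weyl-algebra commutator $\partial_m X_m = X_m\partial_m + 1$. First observe that $\partial_m$ and $X_m$ each commute with every generator of $A_{m-1}(K)$, so that $M/\partial_m M$, $\ker(\partial_m)$, $M/X_m M$, and $\ker(X_m)$ inherit natural graded $A_{m-1}(K)$-module structures. From here, verifying the generalized-Eulerian condition in each case is just a matter of computing how $\E_m$ simplifies when restricted to these quotients or kernels, and then reconciling the grading with the external shift in the statement.

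For part (1), let $u \in M_n$ satisfy $(\E_m - n)^a u = 0$. From $\partial_m X_m = X_m\partial_m + 1$ we get $X_m\partial_m u = \partial_m(X_m u) - u$, so in $M/\partial_m M$ one has $X_m\partial_m \bar{u} = -\bar{u}$, hence $\E_m \bar{u} = (\E_{m-1} - 1)\bar{u}$. Therefore $(\E_{m-1} - (n+1))^a \bar{u} = 0$; since the shift by $(-1)$ makes the graded degree of $\bar{u}$ equal to $n+1$, the generalized-Eulerian relation over $A_{m-1}(K)$ holds. For the kernel, $u \in \ker\partial_m$ gives $X_m\partial_m u = 0$, hence $\E_m u = \E_{m-1} u$ on $\ker\partial_m$ and thus $(\E_{m-1} - n)^a u = 0$. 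The Koszul convention for $\partial_m$ of degree $-1$ places $\ker(\partial_m)$ inside $M(1)$, so the external shift by $(-1)$ returns $u$ to its original degree $n$, as required.

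Part (2) is entirely parallel. On $M/X_m M$ one has $X_m\partial_m \bar{u} \equiv 0$, so $\E_m = \E_{m-1}$ there and $(\E_{m-1} - n)^a \bar{u} = 0$. On $\ker(X_m)$ one computes $X_m\partial_m u = \partial_m X_m u - u = -u$, so $\E_m u = (\E_{m-1} - 1)u$ and $(\E_{m-1} - (n+1))^a u = 0$; here no external shift is needed, because the Koszul convention for $X_m$ of degree $+1$ already places $\ker(X_m)$ inside $M(-1)$, where $u$ sits in degree $n+1$.

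Part (3) is the specialization $m = 1$, in which $\E_0 \equiv 0$ and the relations above become scalar equations in $K$. For example, on $M/\partial_1 M$ one obtains $(-1-n)^a \bar{u} = 0$, forcing $\bar{u} = 0$ unless $n = -1$; the analogous scalar equations on $\ker(\partial_1)$, $M/X_1 M$, and $\ker(X_1)$ pin each homology to a single $M$-degree, which after the appropriate Koszul-grading shift becomes concentration in degree $-1$ or $0$ as stated. The only real obstacle I foresee is keeping the Koszul-grading conventions consistent, in particular tracking the different effects of $\partial_m$ having degree $-1$ versus $X_m$ having degree $+1$ on the external shift; once these are fixed, all four assertions reduce to short algebraic manipulations built on the generalized-Eulerian relation.
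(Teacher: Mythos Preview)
Your proposal is correct. The paper does not actually supply a proof of this lemma: it merely recalls the statements and points to the external references \cite[3.2, 3.5]{P2} and \cite[5.3, 5.5]{PS}. Your argument---using the decomposition $\E_m = \E_{m-1} + X_m\partial_m$, the commutator $\partial_m X_m = X_m\partial_m + 1$, and then reading off how $X_m\partial_m$ acts as the scalar $0$ or $-1$ on each of the four subquotients---is exactly the computation carried out in those cited papers, so there is no genuine methodological difference to report.

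One small point worth making explicit in a final write-up: to pass from $(\E_m - n)^a u = 0$ in $M$ to the same relation in $M/\partial_m M$ (respectively $M/X_m M$), you are implicitly using that $\partial_m M$ and $X_m M$ are $\E_m$-stable; this follows from $[\E_m,\partial_m] = -\partial_m$ and $[\E_m, X_m] = X_m$. Likewise, to iterate the identity $\E_m = \E_{m-1} + c$ (with $c \in \{0,-1\}$) to a power $a$, you need $\E_{m-1}$ to commute with $X_m\partial_m$, which it does since the variables are disjoint. With these checks added, your argument is complete, and your handling of the Koszul grading shifts is consistent with the exact sequences used throughout the paper (e.g.\ $H_1(\partial_m,M)_j \subseteq M_{j+1}$ and $H_1(X_m,M)_j \subseteq M_{j-1}$).
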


\section{Graded holonomic modules}
\s \label{setup-graded-holonomic} In this section $K$ is a field of characteristic zero, $A = K[[Y_1,\ldots, Y_d]]$, 
and $R = A[X_1,\ldots, X_m]$ be standard graded with $\deg A = 0$ and $\deg X_i = 1$ for all $i$.
Let
$\Gamma = A<\delta_1,\ldots, \delta_d>$,
 where $\delta_j = \partial/\partial Y_j$, be the ring of $K$-linear differential operators on $A$. 
 Let $\D_m$ be the $m^{th}$ Weyl algebra over $\Gamma$. We note that $\D_m = \Gamma \otimes_K A_m(K)$.
We give a grading on $\D_m$ by giving $\deg \Gamma = 0$, $\deg X_i = 1$ and $\deg \partial_i = -1$. Notice that
$R$ is a graded subring of $\D_m$. 

There is a well-known filtration $\T$ on $\D_m$ such that
$\gr_\T \D_m $ is isomorphic to the polynomial ring over $A$ in $2m + d$-variables
( for instance see \ref{filt}). Furthermore the weak global dimension of $\D_m$ is 
$d + m$, see \cite[3.1.9]{Bjork}. Thus we can define the notion of holonomic $\D_m$-modules. The following result is the main result of this section:

\begin{theorem}\label{mom}
Let $\FF$ be a graded Lyubeznik functor on $ \ ^* Mod(R)$. Then $\FF(R)$ is a graded, generalized Eulerian,  holonomic $\D_m$-module
\end{theorem}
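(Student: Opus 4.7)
The plan is to split the statement into its three assertions and observe that \emph{graded} and \emph{generalized Eulerian} are already furnished by Theorem~\ref{gen-eul-Lyu} applied to the instance of the setup~\ref{setup-gen-eul} described in Example~\ref{ex-gen-eul}(3), where $B = A$ and $\Lambda = \Gamma = A\langle \delta_1,\ldots,\delta_d\rangle$. The only genuinely new ingredient is the holonomicity of $\FF(R)$, which I would establish by transporting the standard Bjork-type closure properties of holonomicity for $A_m(K)$ to our ring $\D_m = \Gamma \otimes_K A_m(K)$ and then inducting on the composition length of $\FF$.

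First I would check that $R$ itself is a holonomic $\D_m$-module. Equip $R$ with the filtration $F_p R = \bigoplus_{n \leq p} R_n$ (polynomials in the $X_i$ of total degree at most $p$); a direct check shows this is $\T$-compatible, and on $\gr_\T \D_m \cong A[X_1,\ldots,X_m,\xi_1,\ldots,\xi_m,\eta_1,\ldots,\eta_d]$ both the symbols $\xi_i$ of $\partial_i$ (strictly lowering the filtration) and $\eta_j$ of $\delta_j$ (preserving the filtration) act as zero. Hence $\gr_F R \cong A[X_1,\ldots,X_m]$ as a $\gr_\T \D_m$-module, whose Krull dimension is $d+m$; thus $R$ is holonomic of minimal Bernstein dimension. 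Next I would invoke the standard package of facts, all established in the generality we need in \cite{Bjork}: (i)~holonomic $\D_m$-modules form a Serre subcategory of $\ ^*Mod(\D_m)$, closed under kernels, images, cokernels, and extensions; (ii)~for any homogeneous $f \in R$, the localization $M_f$ of a holonomic $\D_m$-module $M$ is again holonomic (Bernstein's theorem); and hence (iii)~for any homogeneous ideal $I \subseteq R$ and any holonomic $\D_m$-module $M$, every $H^i_I(M)$ is holonomic, via the Cech complex on homogeneous generators of $I$ together with (i) and (ii).

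Finally I would induct on the number $k$ of factors in $\FF = \FF_1 \circ \cdots \circ \FF_k$, the inductive hypothesis being that each $\FF_j$ maps holonomic $\D_m$-modules to holonomic $\D_m$-modules. If $\FF_j = H^i_Y(-)$ with $Y = Y'' \setminus Y'$ a homogeneous locally closed subset of $\Spec(R)$, then the long exact sequence \eqref{eq1} sandwiches $H^i_Y(-)$ between modules of the form $H^a_{Y'}(-)$ and $H^b_{Y''}(-)$, each holonomic by (iii); hence $H^i_Y(-)$ is holonomic by (i). If $\FF_j$ is instead a kernel, image, or cokernel of an arrow in \eqref{eq1}, holonomicity follows directly from (i) applied to maps between holonomic modules guaranteed by (iii). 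The generalized Eulerian property is carried through at each step by Theorem~\ref{gen-eul-Lyu}, and the proof is complete. The main technical obstacle is item~(ii): Bernstein's theorem for the localization of a holonomic $\D_m$-module is substantive in this ``mixed'' setting where the base is the complete regular local ring $A$ rather than a field, but the argument goes through because $\gr_\T \D_m$ is a Noetherian commutative polynomial extension of $A$ and the Bernstein inequality plus the filtration-length arguments of \cite{Bjork} transport verbatim.
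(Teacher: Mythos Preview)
Your approach differs from the paper's and carries a real gap at the step you yourself flag as the ``main technical obstacle.'' Both arguments agree that the graded and generalized Eulerian structure come from Theorem~\ref{gen-eul-Lyu}; the divergence is entirely in holonomicity.

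The paper does \emph{not} attempt to show directly that the building blocks of $\FF$ preserve holonomicity over $\D_m$. Instead it completes: by Proposition~\ref{flat-L}, $\widehat{R}\otimes_R \FF(R) \cong G(\widehat{R})$ for a Lyubeznik functor $G$ on $Mod(\widehat{R})$, and Lyubeznik~\cite[2.2d]{Lyu-1} already gives that $G(\widehat{R})$ is holonomic over $\Sc$. The genuine work in this section (Proposition~\ref{crucial}, Theorem~\ref{compare-dimension}, Corollary~\ref{rach2}) is to pull holonomicity back from $\Sc$ to $\D_m$ via faithful flatness of $\widehat{R}$ over graded $R$-modules and a careful dimension comparison through the bigraded structure on $\gr_\T \D_m$.

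Your direct route rests on item~(ii), Bernstein's theorem for localization of holonomic $\D_m$-modules, which you assert ``transports verbatim'' from \cite{Bjork}. This is not justified: Bj\"ork's Chapter~3 treats differential operators on $K[[x_1,\ldots,x_n]]$, i.e.\ the ring $\Sc$ in the paper's notation, not the hybrid ring $\D_m = D_K(A)\otimes_K A_m(K)$, which is a proper subring of $\Sc$ sitting over $R = A[X_1,\ldots,X_m]$ rather than $\widehat{R}$. Neither the finite-length property of holonomic $\D_m$-modules nor the existence of $b$-functions for elements of $R$ acting on them is established in the cited references for this ring, and the Bernstein inequality alone does not give finite generation of $M_f$. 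The paper's completion argument is designed precisely to route around this. A smaller point: your filtration $F_p R = \bigoplus_{n\le p} R_n$ violates condition~(1) of the paper's definition of a $\T$-compatible filtration, since each $\F_p$ must be a graded $R$-submodule of $M$; the trivial filtration $\F_n = R$ for $n\ge 0$ and $\F_n = 0$ for $n<0$ is what you want, and it still yields $\gr_\F R \cong R$ with the symbols acting trivially, hence dimension $d+m$.
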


\begin{remark}
To prove Theorem \ref{mom} we need to first give  graded filtration's on relevant objects so that the associated graded filtration has a natural bi-graded structure. This is not done in  the standard reference \cite{Bjork}. So we are compelled to do it carefully in this section.
\end{remark}

The following result is definitely known. We sketch a proof for the convenience of the reader.
\begin{lemma}\label{basis}
 The set 
 $$\{ \delta^{\alpha} \partial^\beta \mid  \text{where} \ \delta^\alpha = \delta_1^{\alpha_1}\cdots\delta_d^{\alpha_d}, \ 
 \text{and} \ \partial^\beta = \partial_1^{\beta_1}\cdots \partial_m^{\beta_m}, \ \ \text{where} 
 \ \alpha_i, \beta_j \geq 0 \} $$
 is a basis of $\D_m$ as a left $R$-module.
 \end{lemma}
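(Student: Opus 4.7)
The plan is to factor the basis claim through two intermediate freeness statements: freeness of $\D_m$ as a left $\Gamma$-module with basis $\{X^\gamma \partial^\beta\}$, and freeness of $\Gamma$ as a left $A$-module with basis $\{\delta^\alpha\}$. Since $R = A[X_1,\dots,X_m]$ sits between $A$ and $\D_m$, one then passes from $\Gamma$-freeness to $R$-freeness by using the fact that the $\delta_j$'s and the $X_i$'s mutually commute inside $\D_m$.

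First I would invoke the standard PBW-type basis for the Weyl algebra $A_m(K)$: as a $K$-vector space, $A_m(K)$ has basis $\{X^\gamma \partial^\beta\}$. Since $\D_m = \Gamma \otimes_K A_m(K)$ (recall that $K$ lies in the center of $\Gamma$), it follows immediately that $\D_m$ is free as a left $\Gamma$-module with basis $\{X^\gamma \partial^\beta\}$. Next, the structure of $\Gamma = A\langle \delta_1,\dots,\delta_d\rangle$ as an iterated Ore extension of $A$ shows that $\Gamma$ is free as a left $A$-module with basis $\{\delta^\alpha\}$; equivalently, any $\gamma \in \Gamma$ has a unique expression $\gamma = \sum_\alpha a_\alpha \delta^\alpha$ with $a_\alpha \in A$.

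The remaining ingredient is the commutation relations inside $\D_m$: the $\delta_j$ act on $R$ as derivations that annihilate each $X_i$, and conversely $\partial_i(Y_j) = 0$, so $[\delta_j, X_i] = 0$ and $[\delta_j, \partial_i] = 0$ for all $i,j$. To prove spanning: an arbitrary element of $\D_m$ is, by the $\Gamma$-basis statement, of the form $\sum_{\gamma,\beta} \gamma_{\gamma,\beta} X^\gamma \partial^\beta$ with $\gamma_{\gamma,\beta} \in \Gamma$; writing each $\gamma_{\gamma,\beta} = \sum_\alpha a_{\alpha,\gamma,\beta} \delta^\alpha$ and moving $\delta^\alpha$ past $X^\gamma$ (legal by the commutation above), the sum rearranges to $\sum_{\alpha,\beta}\bigl(\sum_\gamma a_{\alpha,\gamma,\beta} X^\gamma\bigr)\, \delta^\alpha \partial^\beta$, and the parenthesized expression lies in $R$.

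For linear independence, suppose $\sum_{\alpha,\beta} r_{\alpha,\beta}\, \delta^\alpha \partial^\beta = 0$ with $r_{\alpha,\beta} \in R$. Expand $r_{\alpha,\beta} = \sum_\gamma a_{\alpha,\beta,\gamma} X^\gamma$ with $a_{\alpha,\beta,\gamma} \in A$ and use $\delta^\alpha X^\gamma = X^\gamma \delta^\alpha$ to rewrite the relation as $\sum_{\gamma,\beta}\bigl(\sum_\alpha a_{\alpha,\beta,\gamma} \delta^\alpha\bigr) X^\gamma \partial^\beta = 0$. Left $\Gamma$-freeness forces $\sum_\alpha a_{\alpha,\beta,\gamma} \delta^\alpha = 0$ in $\Gamma$ for each $(\beta,\gamma)$, and then left $A$-freeness of $\Gamma$ forces every $a_{\alpha,\beta,\gamma} = 0$, hence $r_{\alpha,\beta} = 0$. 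The only subtlety, which is not really an obstacle, is the careful bookkeeping of the commutation relations to justify the rearrangements; everything else is a direct appeal to the two freeness statements listed at the outset.
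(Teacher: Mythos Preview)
Your proposal is correct and follows essentially the same route as the paper's sketch: both factor through the freeness of $\D_m$ over $\Gamma$ with basis $\{X^\gamma \partial^\beta\}$, the freeness of $\Gamma$ over $A$ with basis $\{\delta^\alpha\}$, and the commutation of the $\delta_j$ with the $X_i$. You simply spell out the spanning and independence arguments that the paper leaves implicit.
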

\begin{proof}[Sketch of a proof:]
 The result follows since
  $$\{ {X}^{\gamma} \partial^\beta \mid  \text{where} \  X^\gamma = X_1^{\gamma_1}\cdots X_m^{\gamma_m}, \ 
 \text{and} \ \partial^\beta = \partial_1^{\beta_1}\cdots \partial_m^{\beta_m}, \ \ \text{where} 
 \ \gamma_i, \beta_j \geq 0 \} $$
 is a basis of $\D_m$ as a left $\Gamma$-module.  Furthermore
 $$\{ \delta^{\alpha} \mid  \text{where} \ \delta^\alpha = \delta_1^{\alpha_1}\cdots\delta_d^{\alpha_d}, \ 
 \ \alpha_i \geq 0 \} $$
 is a basis of $\Gamma$ as a left $A$-module. Also note that $\delta_j$ commutes with $X_i$ for all $i,j$.
\end{proof}

\s \label{filt} \emph{A filtration of $\D_m$}: \\
We note that $R$ is a graded subring of $\D_m$. Consider the following filtration of $\D_m$ by graded $R$-submodules of $\D_m$.
Set 
\begin{align*}
 \T_0 &= R, \\
 \T_1 &= R + R\delta + R \partial,\\
 \T_n &= \bigoplus_{|\alpha| + |\beta| \leq n} R \delta^\alpha \partial^\beta.
\end{align*}
We note that
\begin{align*}
 \T_n &\subseteq \T_{n+1} \ \ \text{for all} \ n \geq 0.\\
 \bigcup_{n \geq 0} \T_n &= \D_m, \\
 \T_i \T_j &\subseteq \T_{i+j}.
\end{align*}
For convenience set $\T_{-1} = 0$.
Set $\ov{\T_n} = \T_n/\T_{n-1}$ for $n \geq 0$.  We note that $\T_n$ and $\ov{T}_n$ are graded, finitely generated, \emph{free}, $R$-modules.
 
 \s Consider the associated graded ring $\gr_\T \D_m = \bigoplus_{n \geq 0} \ov{\T_n}$. We first note that
 it is a quotient algebra of $R<\ov{\delta_1}, \cdots, \ov{\delta_d}, \ov{\partial_1},\cdots, \ov{\partial_m}>$.
 We now note that as $\delta_j Y_j = Y_j \delta_j + 1$, it follows that $\ov{\delta_j} Y_j = Y_j \ov{\delta_j}$ for all $j$.
 Similarly we get that $X_i$ commutes with $\ov{\partial_i}$. Thus $\gr_\T \D_m$ is a commutative ring and is a quotient
 of a polynomial ring in $d+m$ variables over $R$. It follows from Lemma \ref{basis} that in-fact 
 $$\gr_\T \D_m = R[\ov{\delta_1}, \cdots, \ov{\delta_d}, \ov{\partial_1},\cdots, \ov{\partial_m}], \quad \text{is a polynomial ring}.$$

 \s \label{bi-single-grading}
We note that there are  three graded structures on $\gr_\T \D_m$.
\begin{enumerate}
 \item We give $\deg R = 0$ and $\deg \ov{\delta_i} = 1$ and $\deg \ov{\partial_j} = 1$. We denote $\gr_\T \D_m$ with this graded
 structure as $G^{(0)}(\D_m)$.
 \item
 We give $\deg A = 0$, $\deg X_i = (1,0)$, $\deg \ov{\delta_i} = (0,1)$ and $\deg \ov{\partial_j} = (0,1)$.
 We denote $\gr_\T \D_m$ with this bi-graded
 structure as $G^{(b)}(\D_m)$.
 \item
 We give $\deg A = 0$, $\deg X_i = 1$, $\deg \ov{\delta_i} = 1$ and $\deg \ov{\partial_j} = 1$.
 We denote $\gr_\T \D_m$ with this structure
 structure as $G^{(t)}(\D_m)$.
\end{enumerate}

 \s
Let $M = \bigoplus_{n \in \ZZ}$ be a graded $\D_m$-module. By a \textit{$\T$-compatible filtration} of $M$ we mean a filtration
$\F = \{ \F_n \}_{n \in \ZZ}$ such that
\begin{enumerate}
 \item For all $n \in \ZZ$,   $\F_n$ is an $R$-graded submodule of $M$.
 \item
 $\F_n \subseteq \F_{n+1}$ for all $n \in \ZZ$.
 \item
 $\T_i \F_j \subseteq \F_{i+j}$ for all $i, j$.
 \item
 $\F_n = 0$ for all $n \ll 0$.
 \item
 $\bigcup_{n \in \ZZ} \F_n = M$.
\end{enumerate}
Set $\ov{\F_n} = \F_n/\F_{n-1}$ for all $n \in \ZZ$.
We note that $\gr_\F M = \bigoplus_{n \in \ZZ} \ov{\F_n}$ is a bi-graded $G^{(b)}(\D_m)$-module.
If $\gr_\F M$ is finitely generated as a $G^{(b)}(\D_m)$-module then we say that $\F$ is a good graded filtration on $M$

The following result has a standard proof, see \cite[2.6, 2.7]{Bjork}.
\begin{theorem}\label{fin-gen}
Let $M$ be a graded $\D_m$-module. The following assertions are equivalent:
\begin{enumerate}[\rm (1)]
 \item $M$ is finitely generated as a $\D_m$-module.
 \item There exists a good graded $\T$-compatible filtration on $M$.
\end{enumerate}
\end{theorem}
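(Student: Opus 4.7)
The plan is to adapt the classical correspondence between good filtrations and finite generation over a filtered ring (as in Bj\"ork, Theorems 2.6--2.7) to the graded setting. The key observation that lets this proceed unchanged is that each $\T_n$ is itself a graded $R$-submodule of $\D_m$ (immediate from Lemma \ref{basis} together with $\deg \delta_j = 0$ and $\deg \partial_j = -1$), so every filtration piece $\F_n$ we manipulate is itself a graded $R$-module, and the associated graded $\gr_\F M$ inherits the bi-graded $G^{(b)}(\D_m)$-structure prescribed in \ref{bi-single-grading}.

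For the direction $(2) \Rightarrow (1)$, I would start with a good graded $\T$-compatible filtration $\F$ and pick finitely many bi-homogeneous generators $\ov{x_1},\ldots,\ov{x_s}$ of $\gr_\F M$ over $G^{(b)}(\D_m)$, with $\ov{x_i} \in \ov{\F_{n_i}}$. Lift each $\ov{x_i}$ to a homogeneous element $x_i \in \F_{n_i}$. The claim is that $M = \sum_{i} \D_m x_i$. Since $\bigcup_N \F_N = M$, it suffices to show $\F_N \subseteq \sum_i \D_m x_i$ by induction on $N$; the base case follows from $\F_n = 0$ for $n \ll 0$. For the inductive step, given homogeneous $y \in \F_N$, write $\ov{y} = \sum_i \ov{g_i}\cdot \ov{x_i}$ in $\gr_\F M$ with $\ov{g_i} \in G^{(b)}(\D_m)$ bi-homogeneous of filtration degree $N - n_i$, lift each $\ov{g_i}$ to a homogeneous $g_i \in \T_{N-n_i}$, and note that $y - \sum_i g_i x_i \in \F_{N-1}$, so the inductive hypothesis applies.

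For $(1) \Rightarrow (2)$, I would take homogeneous $\D_m$-generators $y_1,\ldots,y_s$ of $M$ and define
\[
\F_n = \sum_{i=1}^{s} \T_n \cdot y_i \quad \subseteq M.
\]
The five axioms of a $\T$-compatible filtration follow mechanically from the analogous properties of $\T$: each $\F_n$ is a graded $R$-submodule of $M$ because $\T_n$ is a graded $R$-submodule of $\D_m$ and each $y_i$ is homogeneous; $\F_n \subseteq \F_{n+1}$ and $\T_i\F_j \subseteq \F_{i+j}$ come from $\T_i \subseteq \T_{i+1}$ and $\T_i\T_j \subseteq \T_{i+j}$; $\F_{-1} = 0$ from the convention $\T_{-1} = 0$; and $\bigcup_n \F_n = \sum_i \D_m \cdot y_i = M$. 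To see $\F$ is good, observe that the images $\ov{y_i} \in \ov{\F_0}$ generate $\gr_\F M$ as a $G^{(b)}(\D_m)$-module: any class in $\ov{\F_n}$ is represented by $\sum_i u_i y_i$ with $u_i \in \T_n$, and the leading symbols $\ov{u_i} \in G^{(b)}(\D_m)_n$ witness the required expression.

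The main obstacle is the bookkeeping between the internal $R$-grading on $M$ and the filtration index that jointly give $\gr_\F M$ its bi-grading over the polynomial ring $G^{(b)}(\D_m)$. The crucial supporting point is that each $\T_n$ is graded free over $R$, which allows homogeneous lifts of bi-homogeneous elements from $\gr_\F M$ at each step; once this is verified, neither direction uses anything beyond the ungraded Bernstein-style argument in Bj\"ork.
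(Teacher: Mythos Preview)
Your proposal is correct and is precisely the standard argument the paper defers to via ``see \cite[2.6, 2.7]{Bjork}''; the paper gives no further details, so you have simply supplied the graded bookkeeping that the citation leaves implicit. The only point worth noting is that your observation that each $\T_n$ is a graded free $R$-module (hence homogeneous lifts exist) is exactly what the paper records just before stating the theorem, so nothing is missing.
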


\s \label{short-dim} \emph{A short note on dimension:} We first note that first  dimension of finitely generated modules over commutative  Noetherian local rings is defined.
In general we have the following:
\begin{enumerate}
\item
Let $S$ be a Noetherian ring and let $E$ be a finitely generated $S$-module. Then
\[
\dim_S E = \max \{ \dim_{S_\m} E_\m \mid \m \ \text{a maximal ideal of } \ S \ \}.
\]
\item
If $S = \bigoplus_{n \geq 0} S_n$ and $E$ is a finitely generated  graded $S$-module then
\[
\dim_S E = \max \{ \dim_{S_\m} E_\m \mid \m \ \text{a  graded maximal ideal of } \ S \ \}, \ \text{see \cite[1.5.8]{BH}}.
\]
Furthermore a graded maximal ideal $\m$ of $S$ is of the form $(\n, S_+)$ where 
$\n$ is a maximal ideal of $S_0$. In particular if $S_0$ is local with maximal ideal $\m_0$ then
\[
\dim_S E = \dim_{S_\mathcal{M}} E_{\mathcal{M}}, \quad \text{where} \ \mathcal{M} = (\m_0, S_+).
\]
\item 
If $S$ is local with maximal ideal $\n$ and $\widehat{S}$ is the completion of $S$ \wrt \ $\n$ then
\[
\dim_S E = \dim_{\widehat{S}} E\otimes_S \widehat{S}.
\]
\end{enumerate}

We will need the following result which is perhaps well-known. Absence of a suitable reference has forced me to include it here:
\begin{proposition}\label{bi-graded-dim}
Let $S = \bigoplus_{i,j \geq 0} S_{ij}$ be a bigraded Noetherian ring and let $E$ be a finitely generated bi-graded $S$-module. Assume $S_{0,0}$ is local with maximal ideal $\n$. Set $\mathcal{M} = (\n , \bigoplus_{i+j >0} S_{ij})$, the maximal bi-graded ideal of $S$. Then
\[
\dim_S E = \dim_{S_\mathcal{M}} E_\mathcal{\M}.
\]
\end{proposition}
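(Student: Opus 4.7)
The plan is to reduce the bi-graded statement to the singly-graded case already recorded in \ref{short-dim}(2) by collapsing the bi-grading to its total grading. Set $S_n := \bigoplus_{i+j = n} S_{ij}$ and $E_n := \bigoplus_{i+j = n} E_{ij}$. Since $i, j \geq 0$, one has $S = \bigoplus_{n \geq 0} S_n$ with $S_0 = S_{0,0}$, and $E = \bigoplus_{n \geq 0} E_n$. With this singly-graded structure, $S$ is a Noetherian graded ring of the type considered in \ref{short-dim}(2), its degree-zero piece is the local ring $(S_{0,0}, \n)$, and $E$ is a finitely generated graded $S$-module, because finite generation as a module is independent of the choice of grading on $S$.

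Next I would apply \ref{short-dim}(2) directly. The unique maximal graded ideal of $S$ in the total grading is $(\n, S_+) = (\n, \bigoplus_{n > 0} S_n)$. Because $\bigoplus_{n > 0} S_n = \bigoplus_{i+j > 0} S_{ij}$, this ideal coincides with the maximal bi-graded ideal $\M$ appearing in the hypothesis. Thus \ref{short-dim}(2) yields
\[
\dim_S E = \dim_{S_{\M}} E_{\M},
\]
which is the desired equality.

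There is essentially no obstacle here: the content of the proposition is the identification of the maximal bi-graded ideal with the unique maximal graded ideal of the total grading. Apart from this observation, the argument is just the bookkeeping check that a bi-graded module over a bi-graded Noetherian ring retains its finite generation, and hence its Krull dimension, when viewed with respect to the totalling grading. Since we are not using any special feature of bi-grading beyond the fact that the bi-degree-zero and total-degree-zero pieces coincide, no further ingredients are needed.
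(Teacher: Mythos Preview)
Your proof is correct and follows essentially the same approach as the paper: both pass to the total grading $S^{(t)}_n = \bigoplus_{i+j=n} S_{ij}$, identify $\mathcal{M}$ with the unique maximal homogeneous ideal of $S^{(t)}$, and then invoke \ref{short-dim}(2). One minor inaccuracy: you write $E = \bigoplus_{n \geq 0} E_n$, but the bi-degrees of $E$ may be negative (the paper allows $E_{ij}$ for $i,j \in \ZZ$); this does not affect the argument, since \ref{short-dim}(2) applies to any finitely generated graded module.
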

\begin{remark}
The reason why Proposition \ref{bi-graded-dim} requires a proof is that the 
extension of \cite[1.5.8]{BH} to the bigraded case is not known (and according to the author is probably wrong).
\end{remark}
\s \label{tot} To prove Proposition \ref{bi-graded-dim} we need the following well-known construction: 

Let $S = \bigoplus_{i,j \geq 0} S_{ij}$ be a bigraded Noetherian ring. Then
$S$ has a $\mathbb{N}$-graded structure as given below:

Set $S^{(t)}_n  = \bigoplus_{i+j = n} S_{ij} $ and
 $S^{(t)} = \bigoplus_{n \geq 0} S^{(t)}_n$.
 We note that $S = S^{(t)}$ as rings and the later is $\mathbb{N}$-graded.
 
 If   $E = \bigoplus_{i,j \in \ZZ} E_{ij}$ is a bi-graded $S$-module then we can 
 give it a $\ZZ$-graded structure over $S^{(t)}$ as follows: 
 
 Set $E^{(t)}_n  = \bigoplus_{i+j = n} E_{ij} $ and
 $E^{(t)} = \bigoplus_{n \in \ZZ} E^{(t)}_n$. We note that $E = E^{(t)}$ and the later is a $\ZZ$-graded $S^{(t)}$-module.
 
 We now give
 \begin{proof}[Proof of Proposition \ref{bi-graded-dim}]
 Give $\mathbb{N}$-graded structure on $S$ as in \ref{tot}. Furthermore give
 $E$ a $\ZZ$-graded structure over $S^{(t)}$ as in \ref{tot}.
 
 Notice that $\mathcal{M}$ is also the unique maximal homogeneous ideal of $S^{(t)}$. The result follows from \ref{short-dim}(2).
 \end{proof}

\s \textit{Dimension of graded $\D_m$-modules:}
We first note the three graded structures on $\gr_T \D_m$. We work with 
$G^{(b)}(\D_m)$, the bi-graded structure on $\gr_T \D_m$. We note that $G^{(b)}(\D_m)_{0,0} = A$ is local with maximal ideal 
$\n = (Y_1,\ldots, Y_d)$. Let $\M$ be the unique bi-graded maximal ideal of $G^{(b)}(\D_m)$.

Then we have
\begin{lemma}\label{local-dim}
Let $E$ be a  finitely generated, graded, $\D_m$-module. Let $\F$ and $\G$ be two graded,  good, $\T$-compatible filtration's of $E$. Then
\begin{enumerate}[\rm (1)]
\item
\[
\dim_{(\gr_\T \D_m)_\M} (\gr_\F E)_\M  =  \dim_{(\gr_\T \D_m)_\M} (\gr_\G E)_\M
\]
\item
$ \dim \gr_\F E = \dim \gr_\G E$ as $\gr_\T \D_m$-modules.
\end{enumerate}
\end{lemma}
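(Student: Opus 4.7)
The plan is to prove (1) first by following the classical argument that the characteristic variety of a finitely generated $\D_m$-module is independent of the choice of good filtration, and then deduce (2) from (1) via Proposition \ref{bi-graded-dim}.

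The first step toward (1) is to show that any two good graded $\T$-compatible filtrations $\F$ and $\G$ on $E$ are \emph{equivalent}, in the sense that there exists an integer $c \geq 0$ with $\F_{n-c} \subseteq \G_n \subseteq \F_{n+c}$ for all $n \in \ZZ$. This follows from the characterization of good filtrations extractable from Theorem \ref{fin-gen}: writing $\F_n = \sum_{i=1}^{k} \T_{n-n_i}e_i$ and $\G_n = \sum_{j=1}^{\ell}\T_{n-m_j}f_j$, I would locate each generator $e_i$ inside some $\G_{p_i}$ and each $f_j$ inside some $\F_{q_j}$, and then take $c = \max\{p_i-n_i,\ q_j-m_j,\ 0\}$; the inclusions drop out immediately.

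Next, using this equivalence, I would conclude that $\sqrt{\ann_T \gr_\F E} = \sqrt{\ann_T \gr_\G E}$, where $T = \gr_\T \D_m$ is the polynomial ring over $A$. The standard way to see this is via Rees modules: the Rees ring $B = \bigoplus_n \T_n t^n$ acts on $\mathrm{Rees}(\F) = \bigoplus_n \F_n t^n$ and $\mathrm{Rees}(\G) = \bigoplus_n \G_n t^n$; the inclusions $\F_{n-c} \subseteq \G_n \subseteq \F_{n+c}$ yield $B$-linear maps between suitable shifts of $\mathrm{Rees}(\F)$ and $\mathrm{Rees}(\G)$; and reducing modulo $t$ (which gives back $\gr_\F E$ and $\gr_\G E$) produces $T$-module comparisons from which one extracts the equality of radicals, essentially because the cokernels involved are killed by bounded powers of $t$. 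In particular $\gr_\F E$ and $\gr_\G E$ have the same support in $\Spec T$, so their localizations at $\M$ have the same dimension, proving (1).

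Finally, (2) follows from (1) by Proposition \ref{bi-graded-dim}, applied to the bi-graded Noetherian ring $T = G^{(b)}(\D_m)$, whose $(0,0)$-piece $A$ is local with maximal ideal $\n = (Y_1,\ldots,Y_d)$ and whose unique maximal bi-graded ideal is precisely $\M$. Since $\gr_\F E$ and $\gr_\G E$ are finitely generated bi-graded $T$-modules, the proposition yields $\dim_T \gr_\F E = \dim_{T_\M}(\gr_\F E)_\M$ and similarly for $\G$; combined with (1), this gives (2). The main obstacle I expect is organizing the Rees-module comparison cleanly while respecting the bi-graded structure of $T$ that the rest of the paper relies on: the classical $\D$-module arguments are written in the single-graded setting, so some care is needed to verify they remain valid once the finer bi-grading is in play, and to ensure that the relevant $B$-module maps are indeed bi-homogeneous rather than just homogeneous for the total degree.
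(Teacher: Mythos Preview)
Your proposal is correct and follows essentially the same route as the paper: for (1) the paper simply says the argument ``can be proved along the lines of \cite[2.6.2]{Bjork}'', which is precisely the equivalent-filtration/same-radical-of-annihilator argument you sketch, and for (2) the paper also invokes Proposition \ref{bi-graded-dim} exactly as you do. Your worry about the bi-grading is harmless here, since the equality of radicals is a statement about the underlying module structure and the dimension in (1) is computed after localizing at $\M$.
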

\begin{proof}
We note that $\gr_\F E$ and $\gr_\G E$ are finitely generated  bi-graded $G^{(b)}(\D_m)$-modules.

(1) This can be proved along the lines of \cite[2.6.2]{Bjork}.

(2) This follows from  Proposition \ref{bi-graded-dim}.
\end{proof}
 
 \s In view of Lemma \ref{local-dim} we have the following un-ambiguous definition of
 a  finitely generated, graded, $\D_m$-module $E$,
 set
 \[
 \dim E = \dim \gr_\F E   \quad \text{ as a $\gr_\T \D_m$-module}, where
 \]
 $\F$ is a good, graded $\T$-compatible filtration of $E$.
 
 If $E = 0$ or if $\dim E = d + m$ then we say $E$ is a graded \textit{holonomic }$\D_m$-module.
 
 \s \textit{Crucial idea:} The main idea to Prove Theorem \ref{mom} is the following:
 
 Let $\widehat{R} = A[[X_1,\ldots, X_m]] = K[[Y_1,\ldots, Y_d, X_1,\ldots, X_m]]$. Let $\Sc_{m+d} = \widehat{R}<\delta_1,\ldots, \delta_d, \partial_1,\ldots,\partial_m>$ be the ring of $K$-linear differential operators on $\widehat{R}$ where $\delta_j = \partial/\partial Y_j$ and $\partial_i = \partial/\partial X_i$.
  We note that derivations $\delta_j, \partial_i$ on $R$ extend to $\widehat{R}$. We will also denote $\Sc_{m+d}$ by $\Sc$ if $m,d$ are clear by the context. We also note that $\D_m$ can be naturally considered as a sub-ring of $\Sc$.
 
 Let $E$ be a $\D_m$-module. Consider $E^\prime = \widehat{R}\otimes_R E$. Then $E^\prime$ has a natural structure of $\Sc$-module as follows:
 Let $a \in \widehat{R}$ and $e \in E$. Set
 \begin{align*}
 r \cdot (a\otimes e) &=  ra\otimes e, \ \text{here $r \in \widehat{R}$},\\
 \delta_j\cdot (a\otimes e) &=  \delta_j(a)\otimes e  + a \otimes \delta_j e, \\
  \partial_i\cdot (a\otimes e) &=  \partial_i(a)\otimes e  + a \otimes \partial_i e
 \end{align*}
 If $E$ is a finitely generated  $\D_m$-module then it is clear that $E^\prime$ is a finitely generated $\Sc$-module.
 A simple and extremely essential result is that for graded $\D_m$-modules, the converse holds, i.e.,
 \begin{proposition} \label{crucial}
 Let $E$ be a graded $\D_m$-module. If $E^\prime $ is a finitely generated $\Sc$-module then $E$ is a finitely generated $\D_m$-module.
 \end{proposition}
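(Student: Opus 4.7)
The plan is to exhibit finitely many homogeneous elements $f_1,\ldots,f_s \in E$ that generate $E$ as a $\D_m$-module. Start by choosing finitely many $\Sc$-module generators of $E'$; each is a finite sum of simple tensors $r\otimes g$ with $r\in \widehat{R}$ and $g\in E$, so we may replace them by a generating set of the form $\{1\otimes g_j\}$. Splitting each $g_j$ into its (finitely many) homogeneous components produces the desired $f_1,\ldots,f_s\in E$: they are homogeneous, and $\{1\otimes f_i\}$ still generates $E'$ over $\Sc$. Set $N = \sum_{i=1}^{s} \D_m\cdot f_i$; this is a graded $\D_m$-submodule of $E$, and the goal is to prove $N=E$.

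The first key step is the identification
\[
\Sc\cdot(1\otimes f_i) \;=\; \widehat{R}\otimes_R (\D_m\cdot f_i)
\]
as $\widehat{R}$-submodules of $E'$. This follows from the relations $\partial_j(1\otimes f)=1\otimes \partial_j f$ and $\delta_k(1\otimes f)=1\otimes \delta_k f$ (since $\partial_j$ and $\delta_k$ kill $1\in \widehat{R}$) together with the basis description in Lemma \ref{basis}. Summing over $i$ gives $E' = \widehat{R}\otimes_R N$. Since $\widehat{R}$ is flat over the Noetherian ring $R$, applying $\widehat{R}\otimes_R -$ to $0\to N\to E\to E/N\to 0$ yields $\widehat{R}\otimes_R (E/N) = 0$.

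What remains is to deduce $E/N=0$ from $\widehat{R}\otimes_R(E/N)=0$, and this is where the graded hypothesis on $E$ is crucial. Suppose $q\in E/N$ is a nonzero homogeneous element. Then $\ann_R(q)$ is a homogeneous ideal of $R$; by flatness, $\widehat{R}\otimes_R Rq \cong \widehat{R}/\ann_R(q)\widehat{R}$ embeds in $\widehat{R}\otimes_R(E/N)=0$, forcing $\ann_R(q)\widehat{R} = \widehat{R}$. On the other hand $\widehat{R}=A[[X_1,\ldots,X_m]]$ is local with maximal ideal $\widehat{\mathfrak{m}} = (\mathfrak{m}_A,X_1,\ldots,X_m)\widehat{R}$, and any proper homogeneous ideal of $R$ is generated by homogeneous elements, each lying either in $\mathfrak{m}_A$ (if of degree zero) or in $R_+$ (if of positive degree); such an ideal is contained in $(\mathfrak{m}_A, R_+)R \subseteq \widehat{\mathfrak{m}}\cap R$, so its extension to $\widehat{R}$ remains proper. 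Therefore $\ann_R(q)=R$, hence $q=0$, a contradiction. Thus $E/N=0$ and $E=N$ is finitely generated over $\D_m$.

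I expect the last paragraph to be the main obstacle: the map $R\to \widehat{R}$ is not faithfully flat, so $\widehat{R}\otimes_R Q = 0$ does not in general force $Q=0$ (for example, $Q = R/(\mathfrak{m}_A, X_1-1)R$ satisfies this, since $X_1-1$ becomes a unit in $\widehat{R}$). The graded hypothesis is used precisely to keep annihilators of homogeneous elements homogeneous, and hence trapped inside the unique graded maximal ideal of $R$; without it the proposition would fail.
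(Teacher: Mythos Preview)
Your proof is correct, and its core ingredient---that $\widehat{R}\otimes_R(-)$ is faithfully flat when restricted to graded $R$-modules---is exactly the fact the paper isolates in \ref{cruc-funct} and on which its own proof rests. The difference is in packaging. The paper argues by contradiction via ascending chains: if $E$ were not finitely generated, one would have a strictly ascending chain $N_1\subsetneq N_2\subsetneq\cdots$ of graded $\D_m$-submodules, and faithful flatness on graded modules keeps the chain strict after tensoring with $\widehat{R}$, contradicting the fact that $E'$ is Noetherian over $\Sc$. You instead construct explicit homogeneous generators $f_1,\ldots,f_s$, identify $\Sc\cdot(1\otimes f_i)$ with $\widehat{R}\otimes_R(\D_m f_i)$, and apply the same faithful-flatness principle directly to $E/N$. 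Your argument is a touch more elementary in that it never needs $\Sc$ to be left Noetherian (the paper's chain argument uses this implicitly), and your final paragraph essentially supplies a proof of the graded faithful-flatness statement \ref{cruc-funct} rather than quoting it. Your instinct that the graded hypothesis is indispensable here, and your counterexample $R/(\mathfrak{m}_A,X_1-1)$ showing that $R\to\widehat{R}$ is not faithfully flat on all of $\operatorname{Mod}(R)$, are both on point.
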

 \s \label{cruc-funct} To prove Proposition \ref{crucial} we first note the following facts:   Let $\M$ be the unique  maximal graded ideal of $R$. Let $\widehat{R_\M} $ be the completion of $R$ \wrt \ $\M$. Then note $\widehat{R_\M} = \widehat{R}$. We also have the following well-known results:
\begin{enumerate}
\item
The functor $-\otimes_R R_\M \colon  \ ^* Mod(R) \rt Mod(R_\M)$ is faithfully flat. 
\item
The functor
 $-\otimes_{R_\M}  \widehat{R}\colon   Mod(R_\M) \rt Mod(\widehat{R})$ is faithfully flat.
 \item
 Thus the composite functor $-\otimes_R \widehat{R} \colon  \ ^* Mod(R) \rt \widehat{R}$ is faithfully flat. 
\end{enumerate} 
 We now give
 \begin{proof}[Proof of Proposition \ref{crucial}]
 Suppose if possible $E$ is not a finitely generated graded $\D_m$-module.
 Then we have a \emph{strictly} increasing chain of \emph{graded} $\D_m$-submodules  of $E$:
 \[
 N_1 \varsubsetneq N_2 \varsubsetneq N_3 \varsubsetneq \cdots \varsubsetneq N_i \varsubsetneq N_{i+1} \varsubsetneq \cdots
 \]
 By \ref{cruc-funct} we get that
 \begin{enumerate}
 \item
 $N_i^\prime = \widehat{R}\otimes_R N_i$ is a $\Sc$-submodule of $E^\prime$.
 \item
 $N_i^\prime \varsubsetneq N_{i+1}^\prime$ for all $i \geq 1$.
 \end{enumerate}
 This contradicts the fact that $E^\prime$ is a finitely generated $\Sc$-module.
 \end{proof}
 
 \s  Consider the $\sum$ filtration  on $\Sc$
where
$$\sum_k = \{ Q \in D \mid Q = \sum_{|\alpha| + |\beta| \leq k} q_{\alpha, \beta}(\bY,\bX)\delta^\alpha\partial^\beta \}, $$ 
is the set of differential operators of order $\leq k$. 
The associated graded ring $\gr_{\sum} D = \sum_0 \oplus \sum_1/\sum_0 \oplus \cdots $ is isomorphic to the polynomial ring $\widehat{R}[\xi_1,\ldots, \xi_d, \zeta_1,\cdots, \zeta_m]$ where $\xi_j, (\zeta_i)$ are the images of $\delta_j, 
(\partial_i)$ in $\sum_1/\sum_0$. 

We note that $\widehat{R}\otimes_R \T_k = \sum_k$ for all $k \geq 0$. More is true:
\begin{proposition}\label{filt-flat}
Let $E$ be a graded $\D_m$-module. Let $\F = \{ \F_n \}_{n \in \ZZ}$ be a $\T$-compatible filtration on $E$. Then
\begin{enumerate}[\rm (1)]
\item
 $\widehat{\F} = \{ \widehat{R}\otimes_R\F_n \}_{n \in \ZZ}$ is a $\sum$-compatible filtration on $E^\prime$.
 \item
$ \widehat{R} \otimes_R \gr_\F E = \gr_{\widehat{\F}} E^\prime$.
\item
If $\F$ is a good $\T$-compatible filtration on $E$ then  $\widehat{\F}$ is a $\sum$-compatible good filtration on $E^\prime$.
\end{enumerate}
\end{proposition}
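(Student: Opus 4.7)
The plan is to verify the three assertions by unwinding the definitions of $\T$-compatible and $\sum$-compatible filtrations and systematically invoking the flatness of $\widehat{R}$ over $R$ established in \ref{cruc-funct}.

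For part (1), I would first use flatness of $\widehat{R}$ over $R$ to identify $\widehat{R}\otimes_R \F_n$ with a submodule of $E' = \widehat{R}\otimes_R E$. The inclusions $\widehat{R}\otimes_R \F_n \subseteq \widehat{R}\otimes_R \F_{n+1}$ are then immediate, the vanishing for $n\ll 0$ follows from $\F_n=0$, and $\bigcup_n \widehat{R}\otimes_R \F_n = E'$ follows because $\widehat{R}\otimes_R -$ commutes with direct limits. The substantive point is compatibility with the $\sum$-filtration: since $\sum_k$ is the $\widehat{R}$-span of the monomials $\delta^\alpha\partial^\beta$ with $|\alpha|+|\beta|\leq k$, it suffices by $\widehat{R}$-linearity and induction on $k$ to check that $\delta_j$ and $\partial_i$ send $\widehat{R}\otimes_R \F_j$ into $\widehat{R}\otimes_R \F_{j+1}$. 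This follows from the Leibniz-type formulas
\[
\delta_j\cdot(a\otimes e) = \delta_j(a)\otimes e + a\otimes \delta_j e, \qquad \partial_i\cdot(a\otimes e) = \partial_i(a)\otimes e + a\otimes \partial_i e,
\]
combined with $\delta_j(a),\partial_i(a)\in \widehat{R}$ and $\delta_j, \partial_i\in \T_1$ (so $\delta_j e, \partial_i e\in \F_{j+1}$). The inductive step is then just multiplicativity: $\sum_{i+j}\supseteq \sum_i\cdot \sum_j$ together with $\sum_i\cdot(\widehat{R}\otimes_R\F_n)\subseteq \widehat{R}\otimes_R\F_{n+i}$.

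For part (2), flatness of $\widehat{R}$ over $R$ gives, for each $n$,
\[
\widehat{R}\otimes_R(\F_n/\F_{n-1}) \;\cong\; (\widehat{R}\otimes_R\F_n)/(\widehat{R}\otimes_R\F_{n-1}),
\]
and summing over $n$ yields the $\widehat{R}$-module isomorphism $\widehat{R}\otimes_R \gr_\F E \cong \gr_{\widehat{\F}} E'$. To see that this respects the natural module structures, I would note that the identification $\widehat{R}\otimes_R \T_k = \sum_k$ passes to the associated graded, giving $\gr_{\sum}\Sc = \widehat{R}\otimes_R \gr_\T \D_m = \widehat{R}[\xi_1,\ldots,\xi_d,\zeta_1,\ldots,\zeta_m]$, and the action of $\gr_{\sum}\Sc$ on both sides reduces to the action of $\gr_\T \D_m$ on $\gr_\F E$ after extending scalars.

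For part (3), if $\F$ is good then $\gr_\F E$ is finitely generated over $G^{(b)}(\D_m)$. By part (2), $\gr_{\widehat{\F}} E' = \widehat{R}\otimes_R \gr_\F E$ is obtained by extension of scalars along $G^{(b)}(\D_m) \to \gr_{\sum}\Sc$, so it is finitely generated over $\gr_{\sum}\Sc$, which is exactly the goodness of $\widehat{\F}$.

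I expect the only real obstacle to be the careful bookkeeping in part (1): one must check that $\widehat{R}\otimes_R \F_n$ is genuinely identified with a submodule of $E'$ (using flatness), and that the $\Sc$-action on $E'$, which is built out of the $\D_m$-action on $E$ plus the derivations on $\widehat{R}$, interacts with this identification in the expected way. Once the Leibniz computation above is in hand, the rest is formal.
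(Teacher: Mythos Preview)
Your proposal is correct and follows essentially the same approach as the paper: both verify the filtration axioms via flatness and the Leibniz formulas for $\delta_j$ and $\partial_i$, deduce (2) from the piecewise isomorphism $\widehat{R}\otimes_R(\F_n/\F_{n-1})\cong \F_n'/\F_{n-1}'$, and obtain (3) from the identification $\gr_{\sum}\Sc \cong \widehat{R}\otimes_R \gr_\T\D_m$ together with (2). Your write-up is slightly more explicit about the induction on $k$ needed for $\sum_k$-compatibility and about why the isomorphism in (2) respects the module structures, but the underlying argument is the same.
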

\begin{proof}
(1) Set $\F_n^\prime = \widehat{R}\otimes_R \F_n$. We note the following:
\begin{enumerate}[\rm (a)]
\item
$\F_n^\prime = 0$ for $n \ll 0$.
\item
$\F_n^\prime \subseteq \F_{n+1}^\prime$ for all $n \in \ZZ$.
\item
$\bigcup_{n \in \ZZ} \F_n^\prime = E^\prime$.
\item
$$\widehat{R} \otimes_R \frac{\F_n}{\F_{n-1}} \cong  \frac{\F_n^\prime}{\F_{n-1}^\prime}  \quad \text{ for all $n \in \ZZ$.} $$
\end{enumerate}
We also note that  for all $n \in \ZZ$,
\begin{align*}
\delta_j(\widehat{R} \otimes_R \F_n) &\subseteq \delta_j(\widehat{R})\otimes \F_n + \widehat{R}\otimes \delta_j \F_n \subseteq \widehat{R} \otimes \F_{n+1} \\
\partial_i(\widehat{R} \otimes_R \F_n) &\subseteq \partial_i(\widehat{R})\otimes \F_n + \widehat{R}\otimes \partial_i \F_n \subseteq \widehat{R} \otimes \F_{n+1}
\end{align*}
It follows that  $\widehat{\F} $ is a $\sum$-compatible filtration on $E^\prime$.

(2) This essentially follows from (d) above.

(3) If we give $\gr_\T \D_m$ grading as in \ref{bi-single-grading}(1) then it follows that 
$$\widehat{R}\otimes_R G^{(0)}(\D_m) \cong \gr_{\sum} \Sc. $$
By (2) it follows that if $\F$ is a good filtration on $E$ then $\widehat{\F}$ is a good filtration on $E^\prime$.
\end{proof}

We now relate dimensions of $E$ and $E^\prime$.
\begin{theorem}\label{compare-dimension}
Let $E$ be a finitely generated graded $\D_m$-module. Then dimension of $E$ as a $\D_m$-module is the same as dimension of $E^\prime$ as a $\Sc$-module.
\end{theorem}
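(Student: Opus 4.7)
The plan is to choose a good $\mathcal{T}$-compatible filtration $\F$ on $E$, transport it to a good $\sum$-compatible filtration on $E^\prime$ via Proposition \ref{filt-flat}, and then compare the Krull dimensions of the two associated graded modules by reducing to their localizations and then completions at the respective maximal (bi-)graded ideals. The payoff will be that both completions produce the same module over the same complete regular local ring, so the two dimensions must agree.

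First I would apply Theorem \ref{fin-gen} to fix a good graded $\mathcal{T}$-compatible filtration $\F$ of $E$. By Proposition \ref{filt-flat}, $\widehat{\F}=\{\widehat{R}\otimes_R \F_n\}$ is a good $\sum$-compatible filtration on $E^\prime$, and
\[
\gr_{\widehat{\F}} E^\prime \;\cong\; \widehat{R}\otimes_R \gr_\F E
\]
as graded modules over $\gr_{\sum}\Sc \cong \widehat{R}\otimes_R G^{(b)}(\D_m)$ (identifying $\ov{\delta_j}\leftrightarrow \xi_j$ and $\ov{\partial_i}\leftrightarrow \zeta_i$). By the definition of $\dim E$ following Lemma \ref{local-dim} together with Proposition \ref{bi-graded-dim},
\[
\dim E \;=\; \dim_{G^{(b)}(\D_m)} \gr_\F E \;=\; \dim (\gr_\F E)_{\mathcal{M}_b},
\]
where $\mathcal{M}_b$ is the unique bi-graded maximal ideal of $G^{(b)}(\D_m)=A[X_1,\ldots,X_m,\ov{\delta_1},\ldots,\ov{\delta_d},\ov{\partial_1},\ldots,\ov{\partial_m}]$. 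Similarly, since $\gr_{\sum}\Sc=\widehat{R}[\xi,\zeta]$ is an $\mathbb{N}$-graded Noetherian ring with $\widehat{R}$ local, \cite[1.5.8]{BH} gives $\dim E^\prime = \dim (\gr_{\widehat{\F}} E^\prime)_{\mathcal{M}_s}$ at its unique graded maximal ideal $\mathcal{M}_s$.

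It remains to match these two local dimensions. Since Krull dimension is preserved under completion at the maximal ideal of a Noetherian local ring, it suffices to check that the completions of the two localizations are isomorphic as modules over a common complete local ring. For the base rings: because $A$ is already $(Y_1,\ldots,Y_d)$-adically complete, completing $A[X,\ov{\delta},\ov{\partial}]_{\mathcal{M}_b}$ at its maximal ideal yields $K[[Y_1,\ldots,Y_d,X_1,\ldots,X_m,\ov{\delta_1},\ldots,\ov{\delta_d},\ov{\partial_1},\ldots,\ov{\partial_m}]]$, while completing $\widehat{R}[\xi,\zeta]_{\mathcal{M}_s}$ yields the very same complete regular local ring under the identification $\ov{\delta_j}\leftrightarrow \xi_j$, $\ov{\partial_i}\leftrightarrow \zeta_i$. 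On the module level, the isomorphism $\gr_{\widehat{\F}} E^\prime\cong \widehat{R}\otimes_R \gr_\F E$ from Proposition \ref{filt-flat} shows that both completions are obtained from $\gr_\F E$ by the same faithfully flat base change, so they coincide. The main obstacle is the bookkeeping rather than anything conceptual: one has to keep track of the three different gradings on $\gr_\T \D_m$ available from \ref{bi-single-grading} and of where each ring is already complete, but once this is laid out, the fact that $A$ is $(Y)$-adically complete forces the two ``graded'' completions to be the same regular local ring and the conclusion $\dim E = \dim E^\prime$ follows.
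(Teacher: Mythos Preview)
Your proposal is correct and follows essentially the same route as the paper: choose a good $\T$-compatible filtration, transport it to $E^\prime$ via Proposition~\ref{filt-flat}, localize both associated graded modules at their respective maximal (bi-)homogeneous ideals, and then pass to the common completion $B=K[[Y,X,\ov{\delta},\ov{\partial}]]$ to identify the two dimensions. The paper carries out the final identification by an explicit tensor-product computation ($B\otimes_{\gr_{\sum}\Sc}\gr_{\widehat{\F}}E^\prime = B\otimes_{\gr_\T\D_m}\gr_\F E$), which is exactly the content of your verbal description ``both completions are obtained from $\gr_\F E$ by the same faithfully flat base change''.
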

\begin{proof}
Let $\F$ be a good $\T$-compatible graded filtration on $E$. Then by \ref{filt-flat}, 
$\widehat{\F}$ is a $\sum$-compatible good filtration on $E^\prime$.

Let $\M$ be the unique bi-graded maximal bi-homogeneous ideal of $\gr_\T \D_m$.
Then we have
\[
\dim E = \dim_{(\gr_\T \D_m)_\M} (\gr_\F E)_\M
\]
Finally let the completion of $(\gr_\T \D_m)_\M$ \wrt \  $\M$ be $B$. We note 
that
$$B = k[[Y_1,\ldots, Y_d, \ov{\delta_1}, \ldots, \ov{\delta_d}, X_1,\ldots, X_m, \ov{\partial_1}, \ldots, \ov{\partial_m}]].$$
By \ref{short-dim}(3) we have
\begin{equation}\label{Bella}
\dim_{(\gr_\T \D_m)_\M} (\gr_\F E)_\M = \dim_B  \left( \gr_\F  E \otimes_{\gr_\T \D_m} B \right).
\end{equation}

By \ref{filt-flat} $\widehat{\F}$ is a good -filtration on $E^\prime$.
Let $\N$ be the unique  maximal homogeneous ideal of $\gr_{\sum} \Sc$.
Then we have
\[
\dim E^\prime  = \dim_{(\gr_{\sum} \Sc)_\N} (\gr_{\widehat{\F}} E^\prime )_\N.
\]

Note that $\gr_{\sum} \Sc  = \widehat{R} \otimes_R \gr_\T \D_m$. It follows that

\begin{align}\label{rach}
\gr_{\sum } \Sc \otimes_{\gr_\T \D_m} \gr_\F E &=  \widehat{R}\otimes_R \gr_\T \D_m \otimes_{\gr_\T \D_m} \gr_\F E \\
&= \widehat{R}\otimes_{\gr_\T \D_m} \gr_\F E \\
&= \gr_{\widehat{\F}} E^\prime. 
\end{align}
We note that $B$ is also the completion of $\gr_{\sum} \Sc$ \wrt \ $\N$.
Now notice by \ref{rach} we have
\begin{align*}
B \otimes_{\gr_{\sum} \Sc} \gr_{\widehat{\F}} E^\prime  &= B \otimes_{\gr_{\sum} \Sc} \gr_{\sum } \Sc \otimes_{\gr_\T \D_m} \gr_\F E \\
 &= B \otimes_{\gr_\T \D_m} \gr_\F E.
\end{align*}
The result now follows from \ref{Bella} and \ref{short-dim}(3).
\end{proof}
An important corollary of Theorem \ref{compare-dimension} is the following:
\begin{corollary}\label{rach2}
Let $E$ be a graded $\D_m$-module. If $E^\prime$ is a holonomic $\Sc$-module then $E$ is a holonomic $\D_m$-module.
\end{corollary}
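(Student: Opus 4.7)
\textbf{Proof proposal for Corollary \ref{rach2}.} My plan is to combine the two essential results developed earlier in this section, namely Proposition \ref{crucial} (finite generation descends under completion for graded $\D_m$-modules) and Theorem \ref{compare-dimension} (dimension is preserved under the functor $E \mapsto E'$ for finitely generated graded $\D_m$-modules). Recall that $E$ is called graded holonomic as a $\D_m$-module if $E = 0$ or $\dim E = d+m$, and similarly $E'$ is holonomic as an $\Sc$-module if $E'=0$ or $\dim_\Sc E' = d + m$ (this is the minimal dimension, since $\Sc$ is the Weyl algebra of rank $d+m$ over $K$).

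First, I would dispose of the trivial case. If $E' = 0$, then since $\widehat{R}\otimes_R -$ is faithfully flat on $\ ^*Mod(R)$ (see \ref{cruc-funct}(3)), we immediately conclude $E = 0$, so $E$ is holonomic by definition. Thus we may assume $E' \neq 0$.

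Assume now that $E'$ is a nonzero holonomic $\Sc$-module; in particular $E'$ is finitely generated over $\Sc$ and $\dim_\Sc E' = d+m$. Since $E$ is a graded $\D_m$-module and $E'$ is finitely generated over $\Sc$, Proposition \ref{crucial} forces $E$ to be finitely generated as a $\D_m$-module. In particular $E \neq 0$ (otherwise $E'$ would also vanish, contradicting $E' \neq 0$). Hence the dimension $\dim E$ is defined via any good graded $\T$-compatible filtration (which exists by Theorem \ref{fin-gen}), and Theorem \ref{compare-dimension} applies to give
\[
\dim E \;=\; \dim_\Sc E' \;=\; d+m.
\]
Therefore $E$ is a nonzero graded $\D_m$-module of dimension $d+m$, which is precisely the definition of a graded holonomic $\D_m$-module.

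The argument is essentially a one-line assembly of the two preceding results, so there is no genuine obstacle here; the real technical work has already been done in proving Proposition \ref{crucial} (faithful flatness of $\widehat{R}\otimes_R -$ on graded modules, to preserve strictly ascending chains of graded $\D_m$-submodules) and Theorem \ref{compare-dimension} (showing $\widehat{\F}$ remains good and identifying the completed graded rings to equate the two dimensions). The only point worth flagging is the need to first rule out $E' = 0$, which is handled by the faithful flatness noted in \ref{cruc-funct}.
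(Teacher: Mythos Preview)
Your proof is correct and follows essentially the same approach as the paper: holonomicity of $E'$ gives finite generation, Proposition \ref{crucial} descends finite generation to $E$, and Theorem \ref{compare-dimension} transfers the dimension $d+m$. Your explicit treatment of the case $E'=0$ via faithful flatness is a small added precaution that the paper leaves implicit.
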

\begin{proof}
As $E^\prime$ is a holonomic $\Sc$-module, it is in particular a finitely generated $\Sc$-module. So by \ref{crucial} we get that $E$ is a finitely generated $\D_m$-module. By
Theorem \ref{compare-dimension} we get that  dimension of $E$ as a $\D_m$-module is the same as dimension of $E^\prime$ as a $\Sc$-module ( which is $d + m$ as $E^\prime$ is holonomic).

Thus $E$ has dimension $d + m$ as a $\D_m$-module. So it is holonomic.
\end{proof}
As a trivial consequence of the above corollary we get
\begin{proof}[Proof of Theorem  \ref{mom}]
By Theorem \ref{gen-eul-Lyu}, $\FF(R)$ is a graded, generalized Eulerian $\D_m$-module.
By \ref{flat-L} there exists a Lyubeznik functor $G$ on $Mod(\widehat{R})$ such that $G(\widehat{R}) = \widehat{R}\otimes_R \FF(R)$. By \cite[2.2d]{Lyu-1} we get that $G(\widehat{R})$
is a holonmic $\Sc$-module. So by Corollary \ref{rach2} the result follows.
\end{proof}

\s\label{Koszul-M} \emph{Koszul homology:} Let $A^\prime = K[[Y_1,\ldots, Y_{d-1}]]$ and  $R^\prime = A^\prime[X_1,\ldots, X_m]$.
We give the standard grading on $R^\prime$.
Set $\Gamma^\prime = A^\prime<\delta_1,\ldots, \delta_{d-1}>$ the ring of $K$-linear differential operators on $A^\prime$.
Finally set $\D_m^\prime = A_m(\Gamma^\prime)$. Note that $D_m^\prime$ is graded by giving $\deg \Gamma^\prime = 0$, 
$\deg X_i = 1$ and $\deg \partial_i = -1$.
We note that $A^\prime, R^\prime, \Gamma^\prime$ and $\D_m^\prime$ are subrings
of $A, R, \Gamma$ and $\D_m$-respectively. Let $E$ be a $\D_m$-module. Consider the Koszul homology groups of $E$ \wrt \ $Y_d$ 
\begin{align*}
 H_1(Y_d, E) &= \{ e \in E \mid Y_d e = 0 \}, \\
 H_0(Y_d, E) &= E/Y_d E.
\end{align*}
We note that the map $E \xrightarrow{Y_d} E$ is $\D_m^\prime$-linear. It follows that $H_l(Y_d, E)$ is a graded $\D_m^\prime$-module
for $l = 0, 1$. The following result is extremely useful to us:
\begin{theorem}\label{Koszul-local}[with hypotheses as in \ref{Koszul-M}]
 If $E$ is a graded holonomic, generalized Eulerian  $\D_m$-module then  $H_l(Y_d, E)$ are graded holonomic, generalized Eulerian $\D_m^\prime$-modules
for $l = 0, 1$. 
\end{theorem}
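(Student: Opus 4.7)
The plan is to establish the two claims --- generalized Eulerianity and graded holonomicity --- separately. For the generalized Eulerian part, note that the setup \ref{setup-graded-holonomic} of the theorem satisfies the hypotheses \ref{koszul-gen-eul} of Proposition \ref{kos-induct}. Since $Y_d \in A$ has $\bX$-degree zero, multiplication $E \xrightarrow{Y_d} E$ is a graded $\D_m^\prime$-linear endomorphism, so $H_l(Y_d, E)$ is a graded $\D_m^\prime$-module for $l = 0, 1$. Proposition \ref{kos-induct} then directly yields that these are generalized Eulerian.

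For the holonomicity I reduce to a classical Weyl algebra via the base-change machinery of \ref{cruc-funct}--\ref{rach2} used to prove Theorem \ref{mom}. Let $\Sc^\prime$ denote the ring of $K$-linear differential operators on $\wh{R^\prime} = K[[Y_1, \ldots, Y_{d-1}, X_1, \ldots, X_m]]$, and set $E^\prime = \wh{R}\otimes_R E$. By Theorem \ref{compare-dimension}, $E^\prime$ is holonomic over $\Sc$. Flatness of $\wh{R}/R$ ensures Koszul homology commutes with this base change, and since $Y_d$ annihilates each $H_l(Y_d, E)$ we obtain
\[
H_l(Y_d, E^\prime) \;=\; \wh{R}\otimes_R H_l(Y_d, E) \;=\; \wh{R^\prime}\otimes_{R^\prime} H_l(Y_d, E) \;=\; H_l(Y_d, E)^\prime,
\]
using $\wh{R}/Y_d\wh{R} = \wh{R^\prime}$ and $R/Y_d R = R^\prime$. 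Consequently, provided $H_l(Y_d, E^\prime)$ is holonomic over $\Sc^\prime$, the $\D_m^\prime/\Sc^\prime$ analogue of Corollary \ref{rach2} --- whose proof is identical to the one given --- delivers holonomicity of $H_l(Y_d, E)$ over $\D_m^\prime$.

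The remaining step, and the main obstacle, is to prove that $H_l(Y_d, E^\prime)$ is holonomic over $\Sc^\prime$ for $l = 0, 1$. This is the classical restriction-to-a-smooth-hypersurface theorem for holonomic modules on a Weyl algebra over a formal power series ring: if $M$ is a holonomic $\Sc$-module then $H_l(Y_d, M)$ is a holonomic $\Sc^\prime$-module. Concretely, one takes a good $\sum$-filtration on $E^\prime$, induces a filtration on the two-term Koszul complex $E^\prime \xrightarrow{Y_d} E^\prime$, and passes to the associated graded complex over $\gr_{\sum} \Sc = \wh{R}[\xi_1, \ldots, \xi_d, \zeta_1, \ldots, \zeta_m]$. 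The resulting cohomology modules, being annihilated by $Y_d$, yield good filtrations on the $H_l(Y_d, E^\prime)$ as $\Sc^\prime$-modules, and a standard characteristic-variety dimension estimate gives $\dim H_l(Y_d, E^\prime) \leq \dim E^\prime - 1 = (d + m) - 1$, which is the holonomicity bound over $\Sc^\prime$. The principal technical hurdle is carrying the bi-graded filtration bookkeeping of \ref{filt}--\ref{filt-flat} through the Koszul construction; once that is in place, the dimension count is a routine consequence of commutative algebra on $\gr_{\sum} \Sc$.
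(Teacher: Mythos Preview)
Your proposal is correct and follows essentially the same route as the paper: first invoke Proposition~\ref{kos-induct} for the generalized Eulerian claim, then base-change to $E^\prime = \wh{R}\otimes_R E$, use Theorem~\ref{compare-dimension} to get holonomicity of $E^\prime$ over $\Sc$, identify $H_l(Y_d,E^\prime)$ with $\wh{R^\prime}\otimes_{R^\prime}H_l(Y_d,E)$, and conclude via Corollary~\ref{rach2}. The only difference is that where you sketch a filtration/characteristic-variety argument for the restriction step (holonomicity of $H_l(Y_d,E^\prime)$ over $\Sc^\prime$), the paper simply cites \cite[3.4.2 and 3.4.4]{Bjork}, so your ``main obstacle'' paragraph is unnecessary.
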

\begin{proof}
By Proposition \ref{kos-induct} we get that $H_l(Y_d, E)$ are graded generalized Eulerian $\D_m^\prime$-modules
for $l = 0, 1$. 

 Let $\widehat{R}$ (and $\widehat{R^\prime}$) be completion of $R$ (respectively $R^\prime$)
 \wrt \ its maximal homogeneous ideal. Let $\Sc_{m+d}$ be the ring of $K$-linear differential operators on $\widehat{R}$.
 Finally let $\Sc_{m+d-1}$ be the ring of $K$-linear differential operators on $\widehat{R^\prime}$.
 
 Set $L = \widehat{R}\otimes_R E$. Then by \ref{compare-dimension} we get that $L$ is a holonomic $\Sc_{d + m}$-module.
 So by \cite[3.4.2 and 3.4.4]{Bjork} we get that $H_l(Y_d, L)$ are holonomic $\Sc_{d + m}$-modules.
 We note that for $l = 0, 1$;
 \begin{align*}
  H_l(Y_d, L) &= \widehat{R}\otimes_R H_l(Y_d, E), \\
  &= \widehat{R}\otimes_R R^\prime \otimes_{R^\prime} H_l(Y_d, E), \\
  &= \widehat{R^\prime} \otimes_{R^\prime} H_l(Y_d, E)
 \end{align*}
By \ref{rach2} it follows that $H_l(Y_d, E)$ are graded holonomic $\D_m^\prime$-modules.
\end{proof}
An easy induction to Theorem \ref{Koszul-local} yields
\begin{corollary}\label{full-koszul-hom}[with hypotheses as in \ref{Koszul-M}]
 Let $E$ be a graded holonomic, \\ generalized Eulerian $\D_m$-module. Then for all $\nu \geq 0$ the Koszul homology \\
 $H_\nu(Y_1,\ldots, Y_d; E)$ are graded holonomic, generalized Eulerian  modules over the Weyl algebra $A_m(K)$.
\end{corollary}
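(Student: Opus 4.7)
The plan is to peel off the $Y$-variables one at a time, from $Y_d$ inward to $Y_1$, combining Theorem \ref{Koszul-local} with Lemma \ref{exact} at each step. To keep the bookkeeping clean, set $A^{(j)} = K[[Y_1,\ldots,Y_j]]$ and let $\D_m^{(j)}$ denote the analogue of $\D_m$ built over $A^{(j)}$; thus $\D_m^{(d)} = \D_m$, $\D_m^{(d-1)}$ is the algebra called $\D_m^\prime$ in Theorem \ref{Koszul-local}, and $\D_m^{(0)} = A_m(K)$.

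I claim, by induction on $k \in \{1,\ldots,d\}$, that for every $p \geq 0$ the Koszul homology $H_p(Y_{d-k+1},\ldots,Y_d; E)$ is a graded holonomic generalized Eulerian $\D_m^{(d-k)}$-module. The base case $k = 1$ is exactly Theorem \ref{Koszul-local}. For the inductive step, assume the claim for $k-1$. Apply Theorem \ref{Koszul-local}, now in the $(d-k+1)$-variable setup with $Y_{d-k+1}$ playing the role of the ``last'' $Y$-variable, to the $\D_m^{(d-k+1)}$-module $H_p(Y_{d-k+2},\ldots,Y_d; E)$: this yields that $H_l(Y_{d-k+1}; H_p(Y_{d-k+2},\ldots,Y_d; E))$ is a graded holonomic generalized Eulerian $\D_m^{(d-k)}$-module for $l \in \{0,1\}$ and every $p \geq 0$. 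Lemma \ref{exact} with $u_r = Y_{d-k+1}$ and $\mathbf{u}^\prime = (Y_{d-k+2},\ldots,Y_d)$ then supplies a short exact sequence
$$0 \to H_0(Y_{d-k+1}; H_p(Y_{d-k+2},\ldots,Y_d; E)) \to H_p(Y_{d-k+1},\ldots,Y_d; E) \to H_1(Y_{d-k+1}; H_{p-1}(Y_{d-k+2},\ldots,Y_d; E)) \to 0$$
of graded $\D_m^{(d-k)}$-modules. Holonomic $\D_m^{(d-k)}$-modules form a Serre subcategory (hence are closed under extensions), and generalized Eulerianity is preserved under extensions by Property \ref{property1}, so the middle term is also holonomic generalized Eulerian; this completes the inductive step. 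Specializing to $k = d$ gives the desired conclusion.

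The only slightly delicate point is confirming that the short exact sequence above really is a sequence of $\D_m^{(d-k)}$-modules. This holds because every element of $\D_m^{(d-k)}$ commutes with each of $Y_{d-k+1},\ldots,Y_d$ (the derivations $\delta_1,\ldots,\delta_{d-k}$ of $\D_m^{(d-k)}$ act trivially on $Y_{d-k+1},\ldots,Y_d$), so the multiplications-by-$Y_j$ that define the Koszul differentials are $\D_m^{(d-k)}$-linear, and the short exact sequence of Lemma \ref{exact} lives in the category of graded $\D_m^{(d-k)}$-modules.
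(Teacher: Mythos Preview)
Your proposal is correct and takes essentially the same approach as the paper, which simply asserts that the corollary follows from Theorem \ref{Koszul-local} by ``an easy induction'' (cf.\ also the parallel Theorem \ref{kos-full-gen-eul}, where Property \ref{property1} is invoked). You have spelled out that induction carefully, using Lemma \ref{exact} to build the full Koszul homology one variable at a time and correctly checking that the resulting short exact sequences are $\D_m^{(d-k)}$-linear so that closure under extensions (for holonomicity and, via Property \ref{property1}, for generalized Eulerianity) applies.
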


\s \label{support-m}\textit{ A graded component supported ONLY at the maximal ideal of $A$:}  Let $M = \bigoplus_{n \in \ZZ}M_n$ be a graded $\D_m$ module.
Suppose $M_c$ is supported ONLY at the maximal ideal $\n = (Y_1, \ldots, Y_d)$ of $A$. Then 
$M_c = E^{\alpha}$ where $E$ is the injective hull of
$K$ as an $A$-module (here $\alpha$ is an ordinal, possibly infinite). To see this note that $\Gamma$ the ring of $K$-lineral differential operators over $A$ is
contained in the degree zero component of $\D_m$. So $M_c$ is a $\Gamma$-module supported at $\n$. The result now follows from
\cite[2.4(a)]{Lyu-1}.

\begin{remark} \label{key-remark}
 Suppose $M = \bigoplus_{n \in \ZZ}M_n$ is a graded holonomic, generalized Eulerian  $\D_m$-module. Suppose $M_c$ is supported ONLY at the maximal
 ideal $\n = (Y_1, \ldots, Y_d)$ of $A$. Then by \ref{full-koszul-hom},  \ref{support-m},\ref{inj-koszul} it follows that
 $H_d(\bY, M)$ is a graded holonomic, generalized Eulerian $A_m(K)$-module with $H_d(\bY, M)_c \neq 0$.
 In fact if $M_c \cong E_A(K)^\alpha$ then $H_d(\bY, M)_c \cong K^\alpha$.
\end{remark}

 \section{Vanishing}

 We first prove the following:
 \begin{theorem}\label{Weyl}
  Let $M = \bigoplus_{n\in \ZZ} M_n$ be a graded holonomic generalized Eulerian $A_m(K)$-module.
  Suppose $M_n = 0$ for all $|n| \gg 0$. Then $M = 0$.
 \end{theorem}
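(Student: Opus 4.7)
The plan is a direct argument by contradiction using only the generalized Eulerian structure and the hypothesis $m \geq 1$; interestingly, holonomicity does not seem to be needed. I would assume $M \neq 0$ and, since $M_n = 0$ for $|n| \gg 0$, set $n_1 = \min\{n : M_n \neq 0\}$ and $n_0 = \max\{n : M_n \neq 0\}$, so $n_1 \leq n_0$. The strategy is to analyze the extremal graded pieces and show that the generalized Eulerian condition forces $n_1 = 0$ and $n_0 = -m$, which is impossible for $m \geq 1$.

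For the bottom piece, pick a nonzero homogeneous $e \in M_{n_1}$. Because $\deg \partial_i = -1$ and $M_{n_1 - 1} = 0$, we have $\partial_i e = 0$ for each $i$; hence $\E e = \sum_{i=1}^m X_i \partial_i e = 0$. The generalized Eulerian hypothesis supplies $a \geq 1$ with $(\E - n_1)^a e = 0$, and expanding binomially together with $\E^k e = 0$ for $k \geq 1$ collapses this identity to $(-n_1)^a e = 0$. Since $K$ has characteristic zero and $e \neq 0$, this forces $n_1 = 0$.

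For the top piece, pick a nonzero homogeneous $f \in M_{n_0}$. Now $\deg X_i = 1$ and $M_{n_0 + 1} = 0$ give $X_i f = 0$ for all $i$. Using the Weyl-algebra relation $X_i \partial_i = \partial_i X_i - 1$, we can rewrite $\E = \sum_{i=1}^m \partial_i X_i - m$, so $\E f = -m f$. Applying $(\E - n_0)^a f = 0$ therefore yields $(-m - n_0)^a f = 0$, forcing $n_0 = -m$. Combining $n_1 = 0$ and $n_0 = -m$ with $n_1 \leq n_0$ gives $0 \leq -m$, contradicting $m \geq 1$; hence $M = 0$.

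There is no real technical obstacle in this approach. The only points requiring care are the commutation identity $\E = \sum_i \partial_i X_i - m$ used at the top degree, and the invocation of $\operatorname{char} K = 0$ to conclude that the nonzero integer scalars $(-n_1)^a$ and $(-m - n_0)^a$ are invertible in $K$, which is what converts the Eulerian equations into vanishing of $e$ and $f$.
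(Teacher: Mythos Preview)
Your argument is correct, and it is genuinely different from the paper's proof. The paper proceeds by induction on $m$: in the base case $m=1$ it invokes the Koszul homology $H_l(X_1,M)$ and the de~Rham homology $H_l(\partial_1,M)$, uses Lemma~\ref{sushil} to pin down the degrees in which these can live, and finally appeals to holonomicity (via the fact that a nonzero holonomic $A_1(K)$-module cannot be finite-dimensional over $K$) to reach a contradiction; the inductive step passes to $H_l(X_m,M)$ as an $A_{m-1}(K)$-module. By contrast, you argue directly on the extremal nonvanishing degrees, exploiting only that $\partial_i$ lowers degree and $X_i$ raises degree together with the identity $\E = \sum_i \partial_i X_i - m$, so that the generalized Eulerian equation at the bottom forces $n_1 = 0$ and at the top forces $n_0 = -m$.

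The payoff of your route is twofold: it is markedly shorter, and it exposes that holonomicity is irrelevant to this particular statement --- only the generalized Eulerian hypothesis, the grading, and $\operatorname{char} K = 0$ are used. The paper's approach, on the other hand, exercises the Koszul/de~Rham machinery and Lemma~\ref{sushil} that are reused repeatedly in the tameness and rigidity arguments (Theorems~\ref{rig-tame-dim-0} and~\ref{rig-tame-suprise-dim-0}), so its value lies more in establishing a template than in efficiency for this single result.
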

\begin{proof}
We prove the result by induction on $m$. We first consider the case $m = 1$.
Suppose if possible $M \neq 0$. Then there exists $r, s$ such that
 \begin{align*}
  M_r \neq 0 &\quad \text{and} \ M_n = 0 \ \text{for all} \ n \geq r+1, \\
  M_s \neq 0 &\quad \text{and} \ M_n = 0 \ \text{for all} \ n \leq s - 1.
 \end{align*}
\textit{Claim-1} $r = -1$. Suppose if possible $r \neq 0$. Consider the map $M(-1) \xrightarrow{X_r} M$. Looking at its $(r + 1)^{st}$
component  we get $H_1(X_r, M)_{r + 1} = M_r \neq 0$. But as $M$ is a holonomic $A_1(K)$-module, 
we get that $H_1(X_1, M)$ is a finite dimensional $K$-vector space
concentrated in degree zero, see \ref{sushil}. So $r = -1$. We also get
$M_{-1}$ is a finite dimensional $K$-vector-space.

\textit{Claim-2} $s = 0$.  Consider the map $M(+1) \xrightarrow{\partial_1} M$. Looking at its
$s-1$ component we get that $H_1(\partial_1, M)_{s-1} = M_s \neq 0$. By \ref{sushil}, $H_1(\partial_1, M)$ is a finite dimensional
$K$-vector space concentrated in degree $-1$.  So $s = 0$. We also get
$M_{-1}$ is a finite dimensional $K$-vector-space.

Thus  the $A_1(K)$-module $M = M_{-1} \oplus M_0$ is non-zero and has finite dimension as a $K$-vector space. This is a contradiction.
\cite[1.4.2]{Bjork}.

We now assume the result for $m -1$ and prove it for $m$.
Suppose if possible $M \neq 0$. Then there exists $r, s$ such that
 \begin{align*}
  M_r \neq 0 &\quad \text{and} \ M_n = 0 \ \text{for all} \ n \geq r+1, \\
  M_s \neq 0 &\quad \text{and} \ M_n = 0 \ \text{for all} \ n \leq s - 1.
 \end{align*}
 Set $N = H_1(X_m, M)$. By \ref{sushil} we get that $N$ is a generalized Eulerian, holonomic, $A_{m-1}(K)$-module.
 Also note that $N_j = 0$ for all $|j| \gg 0$. Furthermore $N_{r+1} = M_r \neq 0$. This contradicts our induction hypothesis.
\end{proof}

We now state and prove a result which implies Theorem \ref{vanish}.
\begin{theorem}\label{vanish-general}
Let $A$ be a regular ring containing a field of characteristic zero and let $R = A[X_1,\ldots, X_m]$ be standard graded.
Let $\FF$ be a graded Lyubeznik functor on $\ ^* Mod(R)$ and set $M = \FF(R) = \bigoplus_{n \in \ZZ}M_n$. If
$M_n = 0$ for all $|n| \gg 0$ then $M = 0$.
\end{theorem}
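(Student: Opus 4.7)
The plan is to reduce the general case to Theorem~\ref{Weyl}, which settles the situation $A = K$. Suppose for contradiction that $M \neq 0$, and pick $c \in \ZZ$ with $M_c \neq 0$. Choose a prime $P$ that is minimal in $\Supp_A M_c$ and set $A^\prime = \widehat{A_P}$. Since $A_P$ is a regular local ring containing $\mathbb{Q}$, Cohen's structure theorem gives $A^\prime \cong K[[Y_1, \ldots, Y_d]]$ with $K = k(P)$ and $d = \dim A_P$, placing us exactly in the setup of \ref{setup-graded-holonomic}. Set $R^\prime = A^\prime[X_1, \ldots, X_m]$.

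Applying \ref{std-op}(a) to the localization $A \rt A_P$ and then \ref{std-op}(b) to the completion $A_P \rt A^\prime$, I obtain a graded Lyubeznik functor $\FF^\prime$ on $\ ^* Mod(R^\prime)$ with $N := \FF^\prime(R^\prime) \cong M \otimes_A A^\prime$. By Theorem~\ref{mom}, $N$ is a graded, generalized Eulerian, holonomic $\D_m$-module, and flatness of $A \rt A^\prime$ gives $N_n = M_n \otimes_A A^\prime = 0$ for all $|n| \gg 0$. The minimality of $P$ in $\Supp_A M_c$ together with \ref{min-loc} yields $N_c \cong (M_c)_P$, which is non-zero and supported only at the maximal ideal $\n = (Y_1, \ldots, Y_d)$ of $A^\prime$; hence by \ref{support-m}, $N_c \cong E_{A^\prime}(K)^\alpha$ for some non-zero $\alpha$.

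To finish I pass to the Koszul homology of $N$ with respect to $\bY = Y_1, \ldots, Y_d$. By Corollary~\ref{full-koszul-hom}, $H_d(\bY; N)$ is a graded, generalized Eulerian, holonomic $A_m(K)$-module; and since each $Y_j$ has degree zero, the Koszul complex preserves the $\ZZ$-grading, so $H_d(\bY; N)_n = 0$ for all $|n| \gg 0$. Theorem~\ref{Weyl} then forces $H_d(\bY; N) = 0$. However, Remark~\ref{key-remark} applied to $N$ in degree $c$ gives $H_d(\bY; N)_c \cong K^\alpha \neq 0$, the desired contradiction.

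The main obstacle is organizational rather than conceptual: one must track three independent properties --- holonomicity, the generalized Eulerian condition, and vanishing of graded pieces in $|n| \gg 0$ --- through the flat base change $A \rt \widehat{A_P}$ and the subsequent Koszul reduction. All three are taken care of by the machinery developed in Sections 3 and 4, so once the correct minimal prime $P$ is chosen, the remainder of the argument is essentially bookkeeping.
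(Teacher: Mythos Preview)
Your proof is correct and follows essentially the same route as the paper's: localize and complete at a minimal prime of $M_c$, invoke Theorem~\ref{mom} to get a holonomic generalized Eulerian $\D_m$-module, then use Koszul homology in $\bY$ (via \ref{key-remark}/\ref{full-koszul-hom}) to reduce to Theorem~\ref{Weyl}. The only cosmetic difference is that the paper observes $H_d(\bY;N)\subseteq N$ to get vanishing in $|n|\gg 0$, whereas you argue directly that the Koszul complex is degree-preserving; both are fine.
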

\begin{proof}
 Suppose if possible $M_c \neq 0$ for some $c$. Let $P$ be a minimal prime of $M_c$ and let $B = \widehat{A_P}$. Also 
 set $S = B[X_1,\ldots, X_m]$.
 We note that by \ref{std-op} the functor $G(-) = B \otimes_R \FF(-)$ is a graded Lyubeznik functor on $ \ ^* Mod(S)$.
 
 By Cohen-structure theorem $B = K[[Y_1,\ldots, Y_g]]$ where $K = \kappa(P)$ the residue field of $A_P$  and $g = \height_A P$.
 Let $\Lambda $ be the ring of $K$-linear differential operators on $B$ and set $\D_m = A_m(\Lambda)$ the $m^{th}$-Weyl algebra
 over $\Lambda$. Then by Theorem \ref{mom}  we get that $N = G(R)$ is a graded holonmic, generalized Eulerian
 $\D_m$-module. Notice $N_j = B\otimes_A M_j = 0$ for $|j| \gg 0$. Furthermore by \ref{min-loc} we get that 
 $N_c = (M_c)_P \neq 0$. 
 
 As $P$ is the minimal prime of $M_c$ we get that $N_c$ is supported  ONLY at the maximal ideal of $B$.
 By \ref{key-remark} we get that
 $V = H_d(\bY, N)$ is a graded holonomic, generalized Eulerian $A_m(K)$-module with $V_c \neq 0$. 
As $V \subseteq N$ we get 
  $V_j = 0$ for $|j| \gg 0$.  
 This contradicts  Theorem \ref{Weyl}. Thus $M = 0$.
\end{proof}

 \section{Tameness and Rigidity} 
 It is convenient to prove Tameness and rigidity of graded local cohomology modules together.
 We first show
 \begin{theorem}
  \label{rig-tame-dim-0}
  Let $M = \bigoplus_{n \in \ZZ} M_n$ be a holonomic generalized Eulerian $A_m(K)$-module. Then
  \begin{enumerate}[\rm (I)]
   \item The following assertions are equivalent:
   \begin{enumerate}[\rm (a)]
    \item $M_n \neq 0$ for infinitely many  $n < 0$.
    \item There exists $r$ such that $M_n \neq 0$ for all $n \leq r$.
    \item $M_n \neq 0$ for \emph{all} $n \leq -m$.
    \item
    $M_s \neq 0$ for \emph{some} $s \leq -m$
   \end{enumerate}
 \item The following assertions are equivalent:
 \begin{enumerate}[\rm (a)]
  \item $M_n \neq 0$ for infinitely many $n \geq 0$.
  \item There exists $s$ such that $M_n \neq 0$ for all $n \geq s$.
  \item $M_n \neq 0$ for \emph{all} $n \geq 0$.
  \item $M_t \neq 0$ for \emph{some} $t \geq 0$
 \end{enumerate}
\end{enumerate}
 \end{theorem}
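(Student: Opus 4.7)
The plan is to avoid induction on $m$ and work directly from the generalized Eulerian hypothesis. On each component $M_n$ the Euler operator $\E_m = \sum_{i=1}^{m} X_i \partial_i$ acts as $n$ plus a locally nilpotent perturbation, so for any $n \neq 0$ and any $e \in M_n$ the truncated formal series $\frac{1}{n}\sum_{k \geq 0} (-1)^k n^{-k}(\E_m - n)^k e$ produces an element $e' \in M_n$ with $\E_m e' = e$. Writing $e = \sum_i X_i (\partial_i e')$ and noting that $\partial_i e' \in M_{n-1}$ yields the containment $M_n \subseteq \sum_{i=1}^{m} X_i M_{n-1}$ whenever $n \neq 0$. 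The dual identity $\sum_i \partial_i X_i = \E_m + m$, coupled with the same construction, gives $M_n \subseteq \sum_{i=1}^{m} \partial_i M_{n+1}$ whenever $n \neq -m$.

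These two containments say that non-vanishing propagates \emph{downward} across every index except $0$, and \emph{upward} across every index except $-m$. From here both parts of the theorem are purely combinatorial. For (II), given a nonzero $M_t$ with $t \geq 0$, downward propagation first drags non-vanishing through $M_{t-1}, \ldots, M_1, M_0$ (each step is legal, since the source index is always $\geq 1$, hence $\neq 0$), and then upward propagation --- legal at every index in $[0, \infty)$ since $-m < 0$ --- delivers $M_n \neq 0$ for all $n \geq 0$. For (I), given a nonzero $M_s$ with $s \leq -m$, downward propagation extends non-vanishing to every $n \leq s$ (all such indices are $\neq 0$), while upward propagation pushes it through $s+1, s+2, \ldots, -m$, halting at $-m$ since the step \emph{from} $-m$ is forbidden; taking the union gives $M_n \neq 0$ for all $n \leq -m$. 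The remaining implications (a)$\Leftrightarrow$(b)$\Leftrightarrow$(d) and (c)$\Rightarrow$(a),(b) in each part are then immediate; the one observation needed is that downward propagation also lets us pass from ``some $n<0$ with $M_n\ne0$'' to ``some $n\le -m$ with $M_n\ne0$''.

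The one delicate technical point I expect is that the generalized Eulerian hypothesis only bounds the nilpotence index of $\E_m - n$ on each individual element, not uniformly across $M_n$, so there is no global $K$-linear inverse of $\E_m$; the preimage $e'$ above must be constructed element by element via the truncated geometric series, which is precisely enough for the argument. Interestingly, the holonomicity hypothesis plays no role anywhere in this plan, so the same statement in fact holds for arbitrary (not necessarily holonomic) generalized Eulerian $A_m(K)$-modules.
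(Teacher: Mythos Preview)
Your argument is correct and takes a genuinely different route from the paper's. The paper proceeds by induction on $m$: it passes to the Koszul homologies $H_l(\partial_m,M)$ and $H_l(X_m,M)$, invokes Lemma~\ref{sushil} to know these are generalized Eulerian over $A_{m-1}(K)$, and then does a case analysis depending on whether these homologies are nonzero in the relevant range. By contrast, you work directly on $M$ with no induction: the identity $\E_m = \sum X_i\partial_i$ together with local nilpotence of $\E_m - n$ on $M_n$ gives $M_n \subseteq \sum X_i M_{n-1}$ for $n\neq 0$, and the companion identity $\sum \partial_i X_i = \E_m + m$ gives $M_n \subseteq \sum \partial_i M_{n+1}$ for $n\neq -m$; the equivalences then follow by chasing these two propagation rules.

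Your approach is shorter, self-contained, and --- as you note --- never uses holonomicity, so it proves the result for arbitrary graded generalized Eulerian $A_m(K)$-modules. The paper's inductive framework, on the other hand, is not chosen for economy here: the same Koszul/de~Rham machinery (Lemma~\ref{sushil}) is what drives the harder results later (Theorem~\ref{rig-tame-suprise-dim-0}, Proposition~\ref{bass-dim0}, Theorem~\ref{GB-hol}), where one genuinely needs finite-dimensionality of homology or an inductive structure, so the paper is simply reusing a uniform toolkit. One minor comment: your final remark that downward propagation upgrades ``some $n<0$'' to ``some $n\le -m$'' is correct but unnecessary for the implication $(a)\Rightarrow(d)$ as stated, since infinitely many negative $n$ already forces one below $-m$; it does, however, show the slightly stronger fact that a single nonzero negative component suffices.
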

\begin{proof}
 (I) Clearly $(c) \implies (b) \implies (a) \implies (d)$. We only have to prove
 $(d) \implies (c)$. This we do by induction on $m$.
 
 We first consider the case $m = 1$.
 We have an exact sequence
 \begin{equation}
  \label{rigid-induct-1}
  0 \rt H_1(\partial_1, M)_j \rt M_{j+1} \rt M_j \rt H_0(\partial_1, M)_j \rt 0.
 \end{equation}
By \ref{sushil} we get that  for $l = 0,1$; $H_l(\partial_1, M)$ is concentrated in degree $-1$.
So by \ref{rigid-induct-1} it follows that 
\[
 M_j \cong M_{-1} \quad \text{for all} \ j \leq -1.
\]
So the result follows.

We now assume $m \geq 2$ and the result is known for $m-1$. We have an exact sequence
 \begin{equation}
  \label{rigid-induct-2}
  0 \rt H_1(\partial_m, M)_j \rt M_{j+1} \rt M_j \rt H_0(\partial_m, M)_j \rt 0.
 \end{equation}

By \ref{sushil} $H_l(\partial_m; M)(-1)$ is generalized Eulerian $A_{m-1}(K)$-module. We consider three cases:\\
\textit{Case 1:} $H_0(\partial_m, M)(-1)_j \neq 0$ for some $j \leq -m + 1$. \\
By the induction hypotheses it follows that $H_0(\partial_m, M)(-1)_j \neq 0$ for all $j \leq -m +1$.
So $H_0(\partial_m, M)_j \neq 0$ for all $j \leq -m$. By exact sequence \ref{rigid-induct-2} it follows that $M_j \neq 0$
for all $j \leq -m$.

\textit{Case 2:} $H_1(\partial_m, M)(-1)_j \neq 0$ for some $j \leq -m + 1$. \\
By the induction hypotheses it follows that $H_1(\partial_m, M)(-1)_j \neq 0$ for all $j \leq -m +1$. So
$H_1(\partial_m, M)_j \neq 0$ for all $j \leq -m$. By exact sequence \ref{rigid-induct-2} it follows that $M_j \neq 0$
for all $j \leq -m + 1$.

\textit{Case 3:} For $l = 0,1$ we have  $H_l(\partial_m, M)(-1)_j = 0$ for ALL $j \leq -m + 1$. \\
Therefore for $l = 0,1$ we have  $H_1(\partial_m, M)_j = 0$ for ALL $j \leq -m$. 
By exact sequence \ref{rigid-induct-2} it follows that
$M_j \cong M_{-m+1}$ for all $j \leq -m +1$. The result follows.

\

(II) Clearly $(c) \implies (b) \implies (a) \implies (d)$. We only have to prove
 $(d) \implies (c)$. This we do by induction on $m$.
 
We first consider the case $m = 1$.
 We have an exact sequence
 \begin{equation}
  \label{rigid-induct-3}
  0 \rt H_1(X_1, M)_j \rt M_{j-1} \rt M_j \rt H_0(X_1, M)_j \rt 0.
 \end{equation}
By \ref{sushil} we get that for $l = 0, 1$; $H_l(X_1, M)$ is concentrated in degree $0$.
So by \ref{rigid-induct-3} it follows that 
\[
 M_j \cong M_{0} \quad \text{for all} \ j \geq 0.
\]
So the result follows.

We now assume $m \geq 2$ and the result is known for $m-1$. We have an exact sequence
 \begin{equation}
  \label{rigid-induct-4}
  0 \rt H_1(X_m, M)_j \rt M_{j-1} \rt M_j \rt H_0(X_m, M)_j \rt 0.
 \end{equation}

By \ref{sushil} $H_l(X_m; M)$ is generalized Eulerian $A_{m-1}(K)$-module for $l = 0,1$. We consider three cases:\\
\textit{Case 1:} $H_0(X_m, M)_j \neq 0$ for some $j \geq 0$. \\
By the induction hypotheses it follows that $H_0(X_m, M)_j \neq 0$ for all $j \geq 0$.
 By exact sequence \ref{rigid-induct-4} it follows that $M_j \neq 0$
for all $j \geq 0$.

\textit{Case 2:} $H_1(X_m, M)_j \neq 0$ for some $j \geq 0$. \\
By the induction hypotheses it follows that $H_1(X_m, M)_j \neq 0$ for all $j \geq  0$. So
by exact sequence \ref{rigid-induct-4} it follows that $M_j \neq 0$
for all $j \geq -1$.

\textit{Case 3:} For $l = 0,1$ we have  $H_l(X_m, M)_j = 0$ for ALL $j \geq 0$. \\
By exact sequence \ref{rigid-induct-4} it follows that
$M_j \cong M_{-1}$ for all $j \geq -1$. The result follows.
\end{proof}

 We now prove the following surprising rigidity theorem
 \begin{theorem}
  \label{rig-tame-suprise-dim-0}
  Let $M = \bigoplus_{n \in \ZZ} M_n$ be a holonomic, generalized Eulerian, $A_m(K)$-module with $m \geq 2$ Then 
   the following assertions are equivalent:
   \begin{enumerate}[\rm (i)]
    \item $M_n \neq 0$ for all  $n \in \ZZ $.
    \item There exists $r$  with $-m < r < 0$ such that $M_r \neq 0$. 
   \end{enumerate}
 \end{theorem}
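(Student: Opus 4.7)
The direction $(i)\Rightarrow(ii)$ is immediate: for $m\geq 2$ the integer $-1$ lies in $(-m,0)$, and hypothesis (i) forces $M_{-1}\neq 0$.

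For $(ii)\Rightarrow(i)$ my plan is to bypass any induction on $m$ and any Koszul-homology machinery, and instead to exploit directly the Eulerian operator $\E_m=\sum_{i=1}^{m}X_i\partial_i$ together with its companion identity $\sum_{i=1}^{m}\partial_i X_i=\E_m+m$. The key technical claim I will establish is the following pair of propagation principles: for every $r\in\ZZ$,
\[
r\neq 0 \ \implies \ M_r=\sum_{i=1}^{m}X_i M_{r-1}, \qquad r\neq -m \ \implies \ M_r=\sum_{i=1}^{m}\partial_i M_{r+1}.
\]
In particular $M_r\neq 0$ forces $M_{r-1}\neq 0$ whenever $r\neq 0$, and forces $M_{r+1}\neq 0$ whenever $r\neq -m$.

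To justify the first identity, fix $u\in M_r$ with $r\neq 0$. By the generalized Eulerian hypothesis there exists $a\geq 1$ with $(\E_m-r)^a u=0$. Expanding $(T-r)^a=(-r)^a+T\cdot Q(T)$ in $K[T]$ and substituting $T=\E_m$ gives $(-r)^a u+\E_m Q(\E_m)u=0$. Since $K$ has characteristic zero and $r\neq 0$, the scalar $(-r)^a$ is a unit in $K$; setting $w=-(-r)^{-a}Q(\E_m)u$, which lies in $M_r$ because $\E_m$ preserves degree, we obtain $u=\E_m w=\sum_i X_i(\partial_i w)\in\sum_i X_i M_{r-1}$. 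The reverse inclusion is obvious. The second identity is proved identically, using that on $M_r$ the operator $\sum_i\partial_i X_i=\E_m+m$ acts locally as $(r+m)$ plus a nilpotent, which admits the same inversion whenever $r\neq -m$.

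Given now $M_r\neq 0$ with $-m<r<0$, every integer $n\leq r$ is strictly negative and hence nonzero, so iterating the first propagation principle downward yields $M_n\neq 0$ for all $n\leq r$; similarly every integer $n\geq r$ is strictly greater than $-m$, so iterating the second principle upward yields $M_n\neq 0$ for all $n\geq r$. Hence $M_n\neq 0$ for all $n\in\ZZ$. I do not anticipate a substantive obstacle: the heart of the argument is the observation that on a generalized Eulerian module $\E_m$ is ``locally invertible'' on $M_r$ precisely when $r\neq 0$, and likewise $\E_m+m$ is locally invertible on $M_r$ precisely when $r\neq -m$. As a bonus, the same argument reproves the tail rigidity statements of Theorem \ref{rig-tame-dim-0} without invoking Koszul homology.
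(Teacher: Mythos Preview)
Your proof is correct and takes a genuinely different route from the paper. The paper argues by induction on $m$: it passes to the Koszul homologies $H_l(X_m,M)$ and the de~Rham homologies $H_l(\partial_m,M)$, which are generalized Eulerian holonomic $A_{m-1}(K)$-modules by Lemma~\ref{sushil}, and then combines the induction hypothesis with the earlier tail-rigidity Theorem~\ref{rig-tame-dim-0} through a somewhat lengthy case analysis (four claims, each with sub-cases). Your argument bypasses all of this by observing directly that on $M_r$ the operator $\E_m$ (respectively $\E_m+m=\sum_i\partial_iX_i$) is a scalar $r$ (respectively $r+m$) plus a locally nilpotent operator, hence invertible on each element whenever that scalar is nonzero; this immediately yields the surjections $M_r=\sum_iX_iM_{r-1}$ for $r\neq 0$ and $M_r=\sum_i\partial_iM_{r+1}$ for $r\neq -m$, and the two-sided propagation from any $r\in(-m,0)$ follows.

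What your approach buys: it is shorter, avoids induction and Koszul machinery entirely, and---notably---never uses holonomicity, so it actually proves the stronger statement that the equivalence holds for \emph{any} graded generalized Eulerian $A_m(K)$-module. As you remark, the same device reproves the tail assertions of Theorem~\ref{rig-tame-dim-0} as well. What the paper's approach buys: it fits into the inductive framework already in place for the other results of the section and makes the role of the Koszul/de~Rham homologies uniform throughout, but at the cost of a longer argument for this particular statement.
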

 \begin{proof}
  We only have to prove $(ii) \implies (i)$. This we do by induction on $m$. 
  
  We first consider the case $m = 2$.  We have $M_{-1} \neq 0$.\\
  \textit{Claim-1:} $M_j \neq 0$ for \textit{infinitely} many $j \leq 0$.\\
  If Claim-1 is false then by Theorem \ref{rig-tame-dim-0} we get that $M_j  = 0$ for all $j \leq -2$.
  We have exact sequence
  \begin{equation}
  \label{rigid-induct-5}
  0 \rt H_1(\partial_2, M)_j \rt M_{j+1} \rt M_j \rt H_0(\partial_2, M)_j \rt 0.
 \end{equation}
 By \ref{sushil} the de Rham homology modules $H_l(\partial_2, M)(-1)$ are generalized Eulerian.
 For $l = 0, 1$ we have $H_l(\partial_2, M)_j = 0$  for $j \leq -3$. So by Theorem  \ref{rig-tame-dim-0} we get 
 $H_l(\partial_2, M)(-1)_j = 0$ for $j \leq -1$ and $l = 0, 1$. So $H_l(\partial_2, M)_j = 0$ for $j \leq -2$.
 By exact sequence \ref{rigid-induct-5} we get for $j = -2$,
 \[
  M_{-1} \cong M_{-2} = 0.
 \]
This contradicts our hypothesis. So our Claim-1 is correct. By Theorem \ref{rig-tame-dim-0}  we get $M_j \neq 0$ for $j \leq -2$.

\textit{Claim-2:} $M_j \neq 0$ for \textit{infinitely} many $j \geq 0$.\\
  If Claim-2 is false then by Theorem \ref{rig-tame-dim-0} we get that $M_j  = 0$ for all $j \geq 0$.
  We have exact sequence
  \begin{equation}
  \label{rigid-induct-6}
  0 \rt H_1(X_2, M)_j \rt M_{j-1} \rt M_j \rt H_0(X_2, M)_j \rt 0.
 \end{equation}
 By \ref{sushil} the Koszul homology modules $H_l(X_2, M)$ are generalized Eulerian.
 For $l = 0, 1$ we have $H_l(X_2, M)_j = 0$  for $j \geq 1$. So by Theorem  \ref{rig-tame-dim-0} we get 
 $H_l(X_2, M)_j = 0$ for $j \geq 0$. 
 By exact sequence \ref{rigid-induct-6} we get for $j = 0$,
 \[
  M_{-1} \cong M_0 = 0.
 \]
This contradicts our hypothesis. So our Claim-2 is correct.
By Theorem \ref{rig-tame-dim-0}  we get $M_j \neq 0$ for $j \geq 0$.

Thus we have proved the result when $m = 2$.

\textit{We now assume that $m \geq 3$ and the result is proved for $m-1$}.

We have $M_r \neq 0$ for some $r$ with $-m < r < 0$. We want to show $M_j \neq 0$ for all $j \in \ZZ$.

 \textit{Claim-3:} $M_j \neq 0$ for \textit{infinitely} many $j \leq 0$.\\
  If Claim-3 is false then by Theorem \ref{rig-tame-dim-0}  we get that $M_j  = 0$ for all $j \leq -m$.
  We have exact sequence
  \begin{equation}
  \label{rigid-induct-7}
  0 \rt H_1(\partial_m, M)_j \rt M_{j+1} \rt M_j \rt H_0(\partial_m, M)_j \rt 0.
 \end{equation}
 By \ref{sushil} the de Rham homology modules $H_l(\partial_m, M)(-1)$ are generalized Eulerian.
 For $l = 0, 1$ we have $H_l(\partial_2, M)_j = 0$  for $j \ll 0$. So by Theorem  \ref{rig-tame-dim-0} and induction hypothesis
 we get 
 $H_l(\partial_m, M)(-1)_j = 0$ for $j \leq -1$ and $l = 0, 1$. So $H_l(\partial_m, M)_j = 0$ for $j \leq -2$.
 By exact sequence \ref{rigid-induct-7} we get,
 \[
  M_{-1} \cong M_{-2} \cong \cdots M_{-m +1} \cong M_{-m} = 0.
 \]
This contradicts our hypothesis. So our Claim-3 is correct. By Theorem \ref{rig-tame-dim-0}  we get $M_j \neq 0$ for 
$j \leq  -m$.

\textit{Claim-4:} $M_j \neq 0$ for \textit{infinitely} many $j \geq 0$.\\
  If Claim-4 is false then by Theorem \ref{rig-tame-dim-0} we get that $M_j  = 0$ for all $j \geq 0$.
  We have exact sequence
  \begin{equation}
  \label{rigid-induct-8}
  0 \rt H_1(X_m, M)_j \rt M_{j-1} \rt M_j \rt H_0(X_m, M)_j \rt 0.
 \end{equation}
 By \ref{sushil} the Koszul homology modules $H_l(X_m, M)$ are generalized Eulerian.
 For $l = 0, 1$ we have $H_l(X_m, M)_j = 0$  for $j \gg 0$. So by Theorem  \ref{rig-tame-dim-0} and induction hypotheses
 we get
 $H_l(X_m, M)_j = 0$ for $j \geq -m + 2$. 
 By exact sequence \ref{rigid-induct-8} we get,
 \[
  M_{-m + 1} \cong M_{-m + 2} \cong \cdots \cong M_{-1} \cong M_0 = 0.
 \]
This contradicts our hypothesis. So our Claim-4 is correct.
By Theorem \ref{rig-tame-dim-0}  we get $M_j \neq 0$ for $j \geq 0$.

As $m \geq 3$ we also have to prove that if $c \neq r$ and $-m < c < 0$ then $M_c \neq 0$. \\
Suppose if possible $M_c = 0$. We have to consider two cases:\\
\textit{Case-1:} $c < r$. \\
By exact sequence \ref{rigid-induct-8} we get  $H_1(X_m, M)_{c+1} = 0$. 
 We recall that  \\ $H_1(X_m, M)$ is generalized Eulerian $A_{m-1}(K)$-module. 
 We have  \\ $H_1(X_m, M)_{c+1} = 0$ and $-m +1   < c + 1 < 0$ (as $c < r < 0$). 
  So by induction hypothesis $H_1(X_m, M)_{j} = 0$
 for $-m+1 < j < 0$.
 
 By exact sequence  \ref{rigid-induct-8} we also  get  $H_0(X_m, M)_{c} = 0$.  We recall that 
 \\ $H_0(X_m, M)$ is generalized Eulerian $A_{m-1}(K)$-module. 
 We have to consider two sub-cases:\\
 \textit{Sub-case 1.1:}  $-m+1 < c < 0$. \\
 By induction hypothesis we have $H_0(X_m, M)_j = 0 $ for $-m+1 < j < 0$.
 Thus by \ref{rigid-induct-8} we have
 \[
  M_{-1} \cong M_{-2} \cong \cdots \cong M_{-m + 2} \cong M_{-m + 1}
 \]
This implies $M_r \cong M_c = 0$, a contradiction.

\textit{Sub-case 1.2:}  $c = -m+1 $. \\
So we have $H_0(X_m, M)_{-m +1} = 0$. So again by induction hypothesis  and Theorem \ref{rig-tame-dim-0}, we get 
$H_0(X_m, M)_j = 0 $ for $j < 0$. In particular $H_0(X_m, M)_j = 0 $ for $-m+1 < j < 0$. So again  by an argument similar to
Sub-case-1.1, we get $M_r \cong M_c = 0$, a contradiction.

\textit{Case-2:} $c > r$. \\
 By exact sequence \ref{rigid-induct-7} we get  $H_1(\partial_m, M)_{c-1} = 0$. 
 We recall that  \\ $H_1(\partial_m, M)(-1)$ is a generalized Eulerian $A_{m-1}(K)$-module. 
 We have  \\ $H_1(\partial_m, M)(-1)_{c} = 0$. As $c > r$ we also have $-m + 1 < c < 0$.
  So by induction hypothesis $H_1(\partial_m, M)(-1)_{j} = 0$
 for $-m+1 < j < 0$. Thus $H_1(\partial_m, M)_j = 0$ for $ -m < j < -1$.
 
 By exact sequence  \ref{rigid-induct-7} we also  get  $H_0(\partial_m, M)_{c} = 0$.  We recall that 
 \\ $H_0(\partial_m, M)(-1)$ is  a generalized Eulerian $A_{m-1}(K)$-module. 
 We have to consider two sub-cases:\\
 \textit{Sub-case 2.1:}  $c \neq -1$. \\
 Then $-m + 2 < c+ 1 < 0$ and $H_0(\partial_m, M)(-1)_{c+1} = 0$. So $H_0(\partial_m, M)(-1)_{j} = 0$
 for $-m+1 < j < 0$. Thus $H_0(\partial_m, M)_j = 0$ for $ -m < j < -1$.
So by \ref{rigid-induct-7} we get
\[
M_{-1} \cong M_{-2} \cong \cdots M_{-m+2} \cong M_{-m+1}.
\]
This implies $M_r \cong M_c = 0$, a contradiction.

\textit{Sub-case 2.2:}  $c = -1 $. \\
So we have $H_0(\partial_m, M)(-1)_0 = 0$. So again by induction hypothesis  and Theorem \ref{rig-tame-dim-0}, we get 
$H_0(\partial_m, M)(-1)_j = 0 $ for $j \geq - m + 2$. In particular $H_0(\partial_m, M)(-1)_j = 0 $ for $-m+1 < j < 0$.
Thus $H_0(\partial_m, M)_j = 0$ for $ -m < j < -1$.
 So again  by an argument similar to
Sub-case-2.1, we get $M_r \cong M_c = 0$, a contradiction.
\end{proof}

We now state and prove a result which implies Theorem \ref{tame} and \ref{rigid}.
\begin{theorem}\label{tame-rigid-general}
Let $A$ be a regular ring containing a field of characteristic zero and let $R = A[X_1,\ldots, X_m]$ be standard graded.
Let $\FF$ be a graded Lyubeznik functor on $\ ^* Mod(R)$ and set $M = \FF(R) = \bigoplus_{n \in \ZZ}M_n$. 
Then
  \begin{enumerate}[\rm (I)]
   \item The following assertions are equivalent:
   \begin{enumerate}[\rm (a)]
    \item $M_n \neq 0$ for infinitely many  $n < 0$.
    \item There exists $r$ such that $M_n \neq 0$ for all $n \leq r$.
    \item $M_n \neq 0$ for \emph{all} $n \leq -m$.
    \item
    $M_s \neq 0$ for \emph{some} $s \leq -m$
   \end{enumerate}
 \item The following assertions are equivalent:
 \begin{enumerate}[\rm (a)]
  \item $M_n \neq 0$ for infinitely many $n \geq 0$.
  \item There exists $s$ such that $M_n \neq 0$ for all $n \geq s$.
  \item $M_n \neq 0$ for \emph{all} $n \geq 0$.
  \item $M_t \neq 0$ for \emph{some} $t \geq 0$
 \end{enumerate}
 \item ($ m \geq 2$:) The following assertions are equivalent:
 \begin{enumerate}[\rm (a)]
    \item $M_n \neq 0$ for all  $n \in \ZZ $.
    \item There exists $r$  with $-m < r < 0$ such that $M_r \neq 0$. 
   \end{enumerate}
\end{enumerate}
\end{theorem}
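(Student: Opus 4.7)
The plan is to reduce Theorem \ref{tame-rigid-general} to the already-established results for $A_m(K)$-modules, namely Theorems \ref{rig-tame-dim-0} and \ref{rig-tame-suprise-dim-0}, using precisely the localize--complete--Koszul pipeline introduced in the proof of Theorem \ref{vanish-general}. In parts (I) and (II) the implications $(c)\Rightarrow(b)\Rightarrow(a)\Rightarrow(d)$ are immediate; likewise $(a)\Rightarrow(b)$ is trivial in part (III). So the real work is to establish $(d)\Rightarrow(c)$ in (I) and (II), and $(b)\Rightarrow(a)$ in (III).

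For $(d)\Rightarrow(c)$ in part (I), I would fix an arbitrary $n \leq -m$ and assume $M_s \neq 0$ for some $s \leq -m$. Pick a minimal prime $P$ of the $A$-module $M_s$ and set $B = \widehat{A_P}$; by Cohen's structure theorem, $B = K[[Y_1,\ldots,Y_g]]$ with $K = \kappa(P)$. By \ref{std-op}, the functor $G(-) := B\otimes_R\FF(-)$ is a graded Lyubeznik functor on $\ ^*Mod(B[X_1,\ldots,X_m])$, and by Theorem \ref{mom}, $N := G(R) = B\otimes_A M$ is a graded, holonomic, generalized Eulerian $\D_m$-module. Since $P$ is minimal in $\Supp_A M_s$, the localization $(M_s)_P$ is $PA_P$-torsion, so $N_s = B\otimes_{A_P}(M_s)_P$ is non-zero and is supported only at the maximal ideal of $B$. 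Remark \ref{key-remark} then makes $V := H_d(\bY, N)$ a graded, holonomic, generalized Eulerian $A_m(K)$-module with $V_s \neq 0$. Applying Theorem \ref{rig-tame-dim-0}(I), $(d)\Rightarrow(c)$, yields $V_n \neq 0$. But $V$ is the intersection of the kernels of the degree-zero multiplication maps $Y_1,\ldots,Y_d$ on $N$, hence a graded submodule of $N$, so $N_n \neq 0$; faithful flatness of $A_P\to B$ then gives $M_n \neq 0$, as required.

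Part (II) runs the same way, invoking Theorem \ref{rig-tame-dim-0}(II) at the final step. Part (III), $(b)\Rightarrow(a)$, also follows this pipeline: starting from $M_r \neq 0$ with $-m < r < 0$, one builds $V$ with $V_r \neq 0$, and then Theorem \ref{rig-tame-suprise-dim-0} forces $V_n \neq 0$ (hence $N_n \neq 0$ and $M_n \neq 0$) for every $n \in \ZZ$.

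The technical point requiring the most care is the chain of transports of non-vanishing: that $P$-minimality of $M_s$ really forces $N_s$ to be supported only at the maximal ideal of $B$, that the top Koszul homology $H_d(\bY, N)_s$ inherits this non-vanishing via Remark \ref{key-remark}, and that non-vanishing at degree $n$ lifts back from $V$ to $N$ and then to $M$. Each of these has already been carried out in the excerpt (for the vanishing theorem), so I do not expect a serious new obstacle; the proof is essentially a bookkeeping assembly of the pipeline with Theorems \ref{rig-tame-dim-0} and \ref{rig-tame-suprise-dim-0} substituted for Theorem \ref{Weyl}.
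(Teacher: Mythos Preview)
Your proposal is correct and follows essentially the same route as the paper: reduce the nontrivial implication in each part by choosing a minimal prime $P$ of the relevant nonzero component, passing to $B=\widehat{A_P}$, applying Theorem \ref{mom} to get a holonomic generalized Eulerian $\D_m$-module $N$, using Remark \ref{key-remark} to land in the $A_m(K)$ setting via top Koszul homology, and then invoking Theorems \ref{rig-tame-dim-0} and \ref{rig-tame-suprise-dim-0}; the lift back from $V$ to $N$ to $M$ is exactly the paper's ``$H_d(\bY,N)_n\subseteq N_n$ and $N_n=M_n\otimes_A B$'' step.
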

\begin{proof}
(I) Clearly $(c) \implies (b) \implies (a) \implies (d)$. We only have to prove
 $(d) \implies (c)$.
 Suppose if possible $M_s \neq 0$ for some $s \leq -m$. Let $P$ be a minimal prime of $M_s$ and let $B = \widehat{A_P}$. Also 
 set $S = B[X_1,\ldots, X_m]$.
 We note that by \ref{std-op} the functor $G(-) = B \otimes_R \FF(-)$ is a graded Lyubeznik functor on $ \ ^* Mod(S)$.
 
 By Cohen-structure theorem $B = K[[Y_1,\ldots, Y_g]]$ where $K = \kappa(P)$ the residue field of $A_P$  and $g = \height_A P$.
 Let $\Lambda $ be the ring of $K$-linear differential operators on $B$ and set $\D_m = A_m(\Lambda)$ the $m^{th}$-Weyl algebra
 over $\Lambda$. Then by  Theorem \ref{mom} we get that $N = G(R)$ is a graded holonmic, generalized Eulerian
 $\D_m$-module.  Furthermore by \ref{min-loc} we get that 
 $N_{s} = (M_{s})_P \neq 0$. 
 
 As $P$ is the minimal prime of $M_{s}$ we get that $N_s$ is supported  ONLY at the maximal ideal of $B$.
 By \ref{key-remark} we get that
 $V = H_d(\bY, N)$ is a graded holonomic, generalized Eulerian $A_m(K)$-module with $V_{s} \neq 0$. 
 By Theorem \ref{rig-tame-dim-0} we get  $V_n \neq 0$ for all $n \leq -m$.
 We note that $H_d(\bY, N)_n \subseteq N_n$ for all $n \in \ZZ$. So $N_n \neq 0$ for all 
 $n \leq -m$. As $N_n = M_n \otimes_A B$ it follows that $M_n \neq 0$ for all 
 $n \leq -m$.
 
 (II) and (III) The proof of these assertions are similar to (I). We simply localize at a minimal prime $P$ of $M_i$  where $i$ is appropriately  chosen.
 We then localize and complete $A$ at $P$. Then we take appropriate Koszul homology to reduce the case when $A = K$. The result then follows by Theorem \ref{rig-tame-dim-0} and Theorem \ref{rig-tame-suprise-dim-0}.
\end{proof}

\section{Examples and Proof of Theorem \ref{type2}}
In this section we give examples illustrating Theorem \ref{tame-rigid-general}. We also prove Theorem \ref{type2}.
Throughout $A$ is a regular ring containing a field of characteristic zero 
Also let $R = A[X_1, \ldots, X_m]$ be standard graded.

\begin{example}\label{ex-1}
Let $A$ be any. Let $R_+ = (X_1,\ldots, X_m)$. Then 
\[
 H^m_{R_+}(R)_j = \begin{cases}
 0 & \text{for} \ j \geq -m + 1 \\
 \neq 0  & \text{for} \ j \leq -m.
\end{cases}
\]
\end{example}
At the other extreme we have:
\begin{example}\label{ex-2.1}
Assume $\dim A > 0$ and let $P$ be a prime ideal in $A$ of height $g$. Let 
$I = P R$. Then $H^g_{I}(R) = H^g_P(A)[X_1,\ldots, X_m]$.
In particular
\[
 H^g_I(R)_j = \begin{cases}
 \neq 0 & \text{for} \ j \geq 0\\
 = 0  & \text{for} \ j \leq -1.
\end{cases}
\]
\end{example}

A trivial example of ideal $J$ which is not of the form $QR$ for some ideal $Q$ in $A$ such that $H^i_J(R)$ is supported at non-negative integers is when radical of $J$ in $R$ equals that of $QR$. The following example is different:
\begin{example}\label{ex-2.2}
Let $d \geq 2$
Let $A = K[[Y_1,\ldots, Y_d]]$ or $A = K[Y_1,\dots, Y_d]$. Let $I = (Y_1Y_2, Y_1X_1)R$. We note that there is an exact sequence
\begin{equation}\label{la-la-2.2}
0 \rt H^1_{(Y_1)A}(A) \oplus H^1_{(Y_2)A}(A)  \rt H^1_{(Y_1Y_2)A}(A)  \rt H^1_{(Y_1, Y_2)A}(A)  \rt 0.
\end{equation}
This induces an exact sequence
\[
0 \rt H^1_{(Y_1)R}(R) \oplus H^1_{(Y_2)R}(R)  \rt H^1_{(Y_1Y_2)R}(R)  \rt H^2_{(Y_1, Y_2)R}(R)  \rt 0.
\]
We also have an exact sequence
\[
0 \rt H^1_I(R) \rt H^1_{(Y_1Y_2)R}(R) \xrightarrow{\phi} \left(H^2_{(Y_1Y_2)R}(R)\right)_{Y_1X_1},
\]
where $\phi$ is the natural localization map.
Choose a non-zero  element $u$ in degree zero in $H^1_{(Y_1Y_2)R}(R)$  with the property $Y_1u = 0$, (this is possible by \ref{la-la-2.2}). Then note 
$\phi(u) = 0$. Thus $H^1_I(R)_0 \neq 0$. Also it is clear that $H^1_I(R)$ is supported at non-negative integers. Finally note that $\sqrt{I} \neq \sqrt{QR}$ for any ideal $Q$ of $A$.
\end{example}
The following example is essentially  from \cite[15.1.8]{BS}.
\begin{example}\label{ex-3}
 Assume $m \geq 2$. Let $I = (X_1)$. Then $H^1_I(R)_n$ is an infinitely generated free $A$-module for all $n \in \ZZ$.
\end{example}
Finally we have the following:
\begin{example}\label{finnal-ex-peculair}
Assume $A$ is a domain  with a prime ideal $Q$ of height $m+1$ and an element $\xi$ such that
$(Q,\xi) = A$. An explicit example will be $A = K[Y_1,\ldots, Y_d)$ with $d \geq m+ 1$, and $Q = (Y_1,\ldots, Y_{m+1})$ and $\xi = Y_1  + 1$. We note
$H^{m+1}_Q(A) \neq 0$. So $H^{m+1}_{QR}(R) =  H^{m+1}_Q(A)\otimes_A R \neq 0$.
Recall that $H^m_{R_+}(R)_n $ is  finitely generated  free $A$-module for $n \leq -m$ and zero for $n \geq -m + 1$. Thus we have an exact sequence
\[
0 \rt H^m_{R_+}(R) \xrightarrow{\phi} \left(H^m_{R_+}(R)\right)_\xi \rt H^{m+1}_{(\xi, R_+)}(R) \rt 0,
\]
where $\phi$ is the natural localization map. It follows that
\[
 H^{m+1}_{(\xi, R_+)}(R)_j = \begin{cases}
 0 & \text{for} \ j \geq -m + 1 \\
 \text{non-zero, finitely many copies  of $A_\xi/A$} \  & \text{for} \ j \leq -m.
\end{cases}
\]
We note that the ideals $QR$ and $(\xi, R_+)$ are co-maximal in $R$.
Set $I = QR(\xi, R_+)$. Then it is well-known that
\[
H^{m+1}_I(R) \cong H^{m+1}_{QR}(R) \oplus H^{m+1}_{(\xi, R_+)}(R).
\]
Thus 
\[
 H^{m+1}_{I}(R)_j = \begin{cases}
 0 & \text{for} \  -m < j < 0\\
 \text{non-zero}  &  \ \text{otherwise} 
\end{cases}
\]

\end{example}

Before proving Theorem \ref{type2} we need the following well-known notion:
\begin{definition}
Let $f \in A[X_1,\ldots, X_m]$ (not necessarily homogeneous). \\ By $\content(f)$ we mean the ideal in $A$ generated by coefficients of $A$
\end{definition}
We now give proof of Theorem \ref{type2}. We restate it for the convenience of the reader. We will make the assumption $A$ is a domain to avoid trivial exceptions.
\begin{theorem}\label{type2-pr}
(with hypotheses as in \ref{std}). Further assume $A$ is a domain.
Suppose $J$ is a proper homogeneous ideal in $R$ such that $H^i_J(R)_n \neq 0$ for all $n \geq 0$ and $H^i_J(R)_n = 0$ for all $n < 0$. Then there exists a proper ideal $Q$ of $A$ such that $J \subseteq QR$.
\end{theorem}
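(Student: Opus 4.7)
The plan is to prove the contrapositive: assuming $\content(J) = A$, we derive a contradiction with the vanishing/non-vanishing hypothesis on $H^i_J(R)$. First I would pick a prime $P$ of $A$ that is minimal in $\Supp_A H^i_J(R)_0$ (nonempty since $H^i_J(R)_0 \neq 0$) and pass to $B := \widehat{A_P}$. By Cohen's structure theorem, $B \cong K[[Y_1, \ldots, Y_d]]$ where $K = \kappa(P)$ and $d = \height_A P$. Setting $S = B[X_1,\ldots,X_m]$ and $N := H^i_J(R) \otimes_A B = H^i_{JS}(S)$, Theorem~\ref{mom} provides that $N$ is a graded holonomic generalized Eulerian $\D_m$-module with $N_n = 0$ for $n < 0$. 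By minimality of $P$ and \ref{min-loc}, $N_0$ is supported only at the maximal ideal $\n = (Y_1,\ldots,Y_d)$ of $B$; being nonzero, by \ref{support-m} we have $N_0 \cong E_B(K)^\alpha$ for some $\alpha > 0$. Since $\content(JS) = \content(J) \cdot B = B$, the image $\bar J$ of $JS$ in $K[X_1,\ldots,X_m] = S/\n S$ is a nonzero homogeneous proper ideal.

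Next I would take Koszul homology with respect to $\bY = Y_1,\ldots,Y_d$ to descend to the $A_m(K)$-module setting. Set $V := H_d(\bY; N) = (0 :_N \n) \subseteq N$. By Corollary~\ref{full-koszul-hom} together with Remark~\ref{key-remark}, $V$ is a graded holonomic generalized Eulerian $A_m(K)$-module with $V_0 \cong K^\alpha \neq 0$; and $V$ is a graded $S$-submodule of $N$ since the $Y_j$'s commute with all of $S$. In particular $V_n = 0$ for $n < 0$. Moreover, $V$ is $JS$-torsion (as a submodule of the $JS$-torsion module $N$) and is killed by $\n$, so the $JS$-action on $V$ factors through the image $\bar J$, making $V$ a $\bar J$-torsion graded $K[X_1,\ldots,X_m]$-module.

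To derive the contradiction, I claim the polynomial ring $K[X_1,\ldots,X_m]$ embeds into $V$ as an $A_m(K)$-submodule. Pick any nonzero $v \in V_0$. Since $\partial_i$ has degree $-1$ and $V_{-1} = 0$, we have $\partial_i v = 0$ for each $i$, so the cyclic submodule $A_m(K)\cdot v \subseteq V$ is a quotient of $A_m(K)/(A_m(K)\partial_1 + \cdots + A_m(K)\partial_m)$, which is canonically isomorphic as a left $A_m(K)$-module to $K[X_1,\ldots,X_m]$. Because this polynomial ring is a simple left $A_m(K)$-module in characteristic zero, the surjection $K[X_1,\ldots,X_m] \twoheadrightarrow A_m(K)\cdot v$ is an isomorphism, giving an embedding $K[X_1,\ldots,X_m] \hookrightarrow V$ as graded $K[X_1,\ldots,X_m]$-modules. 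But then this integral domain submodule would inherit the $\bar J$-torsion property, which is absurd since $\bar J \neq 0$ and so $1$ is not annihilated by any power of $\bar J$. This contradiction shows $\content(J)$ is a proper ideal of $A$, and we may take $Q = \content(J)$. The main technical work lies in establishing the embedding $K[X_1,\ldots,X_m] \hookrightarrow V$ and verifying that the $\bar J$-torsion structure on $V$ really does pull back through the socle; the reductions in Steps 1--2 follow routinely from the paper's framework (Theorems~\ref{mom} and~\ref{full-koszul-hom}).
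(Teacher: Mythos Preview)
Your proposal is correct and follows essentially the same route as the paper: localize at a minimal prime of $H^i_J(R)_0$, complete, pass to the Koszul homology $V=H_d(\bY,N)$, and use simplicity of $T=K[X_1,\ldots,X_m]$ as an $A_m(K)$-module to embed $T$ into $V$, contradicting the torsion hypothesis. The one cosmetic difference is that the paper first shows $\content(f)\subsetneq A$ for every individual $f\in J$ and then runs a short maximal-element argument on $\{\content(f):f\in J\}$ to produce a single $Q$, whereas you assume $\content(J)=A$ directly and take $Q=\content(J)$; your packaging is slightly cleaner and avoids that combinatorial step, but the $D$-module core is identical. (A minor point: you assert $\bar J$ is \emph{proper}, which is not immediate, but it is also unnecessary---$\bar J\neq 0$ already suffices for the final contradiction, exactly as in the paper's use of a single nonzero $\bar f$.)
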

\begin{proof}
We  prove the  result by proving the following steps:

\textit{Step-1:} For every $f \in J$, not-necessarily homogeneous, $\content(f)$ is a proper ideal in $A$.

\textit{Step-2:} There exists a proper ideal $Q$ in $A$ such that $\content(f) \in Q$ for ALL $f \in J$.

We note that by Step 2 we have $J \subseteq QR$, which proves our result. 

Next we show:\\
\textit{Step-1} $\implies$ \textit{Step-2}: \\
Consider:
\[
\mathcal{C} = \{ \content(f) \colon f \in J \}.
\] 
$\mathcal{C}$ is a collection of ideals in a Noetherian ring $A$. So $\mathcal{C}$ 
has maximal elements. \\
 \textit{Claim:} $\mathcal{C}$ has a unique maximal element:\\
 Let $\content(g)$ and $\content(h)$ be two maximal elements in $\mathcal{C}$. Set
 $$p = g  +  X_1^{c+1}h \quad \text{where} \ c = \ \text{total degree of} \ g.$$
 Clearly $p \in I$ and $\content(p) \supseteq \content(g) + \content(h)$. By maximality we have
 \[
 \content(g) = \content(h) = \content(p).
 \]
 Thus our claim is proved.
 
 Let $Q$ be the unique maximal element in $\mathcal{C}$. Then clearly $\content(f) \subseteq Q$ for all 
 $f \in J$. This proves Step 2  assuming Step 1 is true:
 
 We now give proof of Step-1:\\ Suppose if possible there exists $f \in J$ such that
 $\content(f) = A$.   Here $f$ \textit{need not} be homogeneous.
 
 We now do our standard procedure:
 Let $P$ be a minimal prime of $M_0$ and let $B = \widehat{A_P}$. Also 
 set $S = B[X_1,\ldots, X_m]$.
  Set $N = H^i_{JS}(S) \cong H^i_J(R)\otimes_A B$.
 
 By Cohen-structure theorem $B = K[[Y_1,\ldots, Y_g]]$ where $K = \kappa(P)$ the residue field of $A_P$  and $g = \height_A P$.
 Let $\Lambda $ be the ring of $K$-linear differential operators on $B$ and set $\D_m = A_m(\Lambda)$ the $m^{th}$-Weyl algebra
 over $\Lambda$. Then by  Theorem \ref{mom} we get that $N$ is a graded holonmic, generalized Eulerian
 $\D_m$-module.  Furthermore by \ref{min-loc} we get that 
 $N_{0} = (M_{0})_P \neq 0$.  Also note that $N_j = 0$ for $j < 0$.
 
 As $P$ is the minimal prime of $M_{0}$ we get that $N_0$ is supported  ONLY at the maximal ideal of $B$.
 By \ref{key-remark} we get that
 $V = H_d(\bY, N)$ is a graded holonomic, generalized Eulerian $A_m(K)$-module with $V_{0} \neq 0$. 
 We note that $H_d(\bY, N)_n \subseteq N_n$ for all $n \in \ZZ$. So $V_n = 0$ for $n < 0$.
 
 Let $f^* $ be the image of $f$ in $B$. Notice $\content(f^*) = B$. Let $\ov{f}$ is the image of $f^*$ in $B/(\bY)[X_1,\ldots, X_m] = K[X_1,\ldots, X_m] = T$. We note that $\ov{f} \neq 0$. Also as $N$ is $(f^*)$-torsion we get that $V$ is $(\ov{f})$-torsion.
 
Let $\D = A_m(K)$ the $m^{th}$-Weyl algebra over $K$.
 Choose $u \neq 0 $ with $ u \in V_0$. Consider the $\D$-linear map $\psi \colon \D \rt V$ which maps $1$ to $u$. Clearly $\psi(\D\partial) = 0$. Thus $\psi$-factors 
 through a $\D$-linear map $\ov{\psi} \colon T \rt V$ which is non-zero. As $T$ is a simple $\D$-module we get that
  $T$ is a $\D$-submodule of $V$. But $V$ is $(\ov{f})$-torsion. Therefore $T$ is $(\ov{f})$-torsion. This is a contradiction.
 \end{proof}

We end this section with two questions.
\begin{question}\label{local-ex4}
Does there exists a  regular local ring $A$  and an ideal $L$  in $R = A[X_1,\ldots, X_m]$ such that 
$H^i_L(R)_n \neq 0$ for all $n \leq -m$, $H^i_L(R)_n \neq 0$ for all $n \geq 0$ and $H^i_L(R) = 0$ for all $n$ with $-m < n < 0$.
\end{question} 
 In Example \ref{finnal-ex-peculair} we had to assume $\dim A \geq m + 1$. So our final question is
 \begin{question}\label{global-ex4}
Does there exists a  regular  ring $A$ of dimension $\leq m$ and an ideal $J$  in $R = A[X_1,\ldots, X_m]$ such that 
$H^i_J(R)_n \neq 0$ for all $n \leq -m$, $H^i_J(R)_n \neq 0$ for all $n \geq 0$ and $H^i_J(R) = 0$ for all $n$ with $-m < n < 0$.
\end{question} 
\begin{remark}
I think that both Questions \ref{local-ex4} and \ref{global-ex4} have a negative answer. However I have no idea how to prove it.
\end{remark}

\section{Infinite generation}
In this section we prove Theorem \ref{inf-gen}. We will make the assumption $A$ is a domain to avoid trivial exceptions. For the convenience of reader we restate Theorem \ref{inf-gen}
\begin{theorem}\label{inf-gen-proof}(with hypotheses as in \ref{std}). Further assume $A$ is a domain. Assume $I \cap A \neq 0$. If $H^i_I(R)_c \neq 0$ then
$H^i_I(R)_c$ is NOT finitely generated as an $A$-module.
\end{theorem}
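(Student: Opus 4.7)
\textbf{Proof plan for Theorem \ref{inf-gen-proof}.}

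The strategy is to assume, for contradiction, that $M_c := H^i_I(R)_c$ is a nonzero finitely generated $A$-module, and then apply the standard reduction of the paper (localize at a minimal prime of $M_c$, complete, invoke Theorem \ref{mom}) so as to transport the hypothesis into the holonomic/generalized Eulerian setting over a regular complete local base where the structure theorem \ref{support-m} is available. The hypothesis $I \cap A \neq 0$ will be used exactly to guarantee that the completion has positive dimension, so that the injective hull used by \ref{support-m} is of infinite length, producing the contradiction.

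First I would exploit $I \cap A \neq 0$ to obtain uniform annihilation. Pick $0 \neq a \in I \cap A$. Because $M = H^i_I(R)$ is $I$-torsion, each element of $M_c$ is killed by some power of $a$; finite generation then forces $a^N M_c = 0$ for some $N \geq 1$, so $\ann_A M_c \neq 0$. Let $P$ be a minimal prime of $\ann_A M_c$; then $P$ is a minimal element of $\Supp_A M_c$, so $(M_c)_P \neq 0$. Since $A$ is a regular domain and $0 \neq a \in P$, we have $g := \height P \geq 1$.

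Next I would execute the standard reduction. Set $B = \widehat{A_P}$ and $K = \kappa(P)$. By Cohen's structure theorem $B \cong K[[Y_1,\dots,Y_g]]$ with maximal ideal $\n = (Y_1,\dots,Y_g)$. Put $S = B[X_1,\dots,X_m]$. By \ref{std-op}, applying the graded Lyubeznik functor $H^i_I(-)$ and then $(-)\otimes_A B$ corresponds to a graded Lyubeznik functor $G$ on $\ ^*Mod(S)$ with $G(S) = N := M \otimes_A B$. By Theorem \ref{mom}, $N$ is a graded, holonomic, generalized Eulerian $\D_m$-module over $\D_m = A_m(D_K(B))$. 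Using \ref{min-loc} (applicable because $P$ is a minimal prime of $M_c$), we identify
\[
N_c \;=\; M_c \otimes_A B \;=\; (M_c)_P \otimes_{A_P} B \;=\; (M_c)_P \;\neq\; 0,
\]
and this module is finitely generated over $B$ with $\Supp_B N_c = \{\n\}$ (because $P$ is minimal in $\Supp_A M_c$).

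Finally I would extract the contradiction from \ref{support-m}. Since $N_c$ is a graded component of the graded $\D_m$-module $N$ and is supported only at $\n$, \ref{support-m} gives $N_c \cong E^\alpha$ with $E = E_B(K)$ the injective hull of $K$ over $B$, and some cardinal $\alpha$; as $N_c \neq 0$, we have $\alpha \geq 1$. On the other hand, $N_c$ is finitely generated over $B$ with $\n$-primary support, hence is annihilated by some power of $\n$ and therefore has finite length. But $g \geq 1$ forces $E$ to have infinite length over $B$ (e.g.\ $E \cong H^g_\n(B)$), so $E^\alpha$ has finite length only if $\alpha = 0$. This contradicts $N_c \neq 0$, completing the proof.

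The main (and only real) obstacle is confirming that the identification $N_c \cong (M_c)_P$ via \ref{min-loc} legitimately lands us in a setting where \ref{support-m} applies with $g \geq 1$; the hypothesis $I \cap A \neq 0$ is precisely what makes $P \neq 0$ and hence $g \geq 1$, which is the essential point where the injective hull fails to be finitely generated.
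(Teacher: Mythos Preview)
Your proof is correct, but it follows a genuinely different route from the paper's own argument.

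Both proofs begin by assuming $M_c$ is nonzero and finitely generated, and pass to a complete regular local base via Cohen's theorem to get $N_c$ as a $\Lambda$-module (where $\Lambda = D_K(B)$). The divergence is in \emph{which} prime you localize at and \emph{how} the contradiction is extracted. You localize at a \emph{minimal} prime $P$ of $\Supp_A M_c$, which buys you the hypothesis ``$N_c$ is supported only at $\n$''; then you invoke \ref{support-m} to write $N_c \cong E_B(K)^\alpha$ and finish with a length count (finite length versus $E_B(K)$ of infinite length since $g \geq 1$). The paper instead localizes at a \emph{maximal} ideal $\m \supseteq Q = I \cap A$, so it cannot appeal to \ref{support-m}; rather, it uses a direct differential-operator trick: from $J^n V = 0$ one picks $p \in \n$ of minimal $\n$-order annihilating $V$, and the identity $\delta_i p = p\delta_i + \delta_i(p)$ in $\Lambda$ forces $\delta_i(p)$ to also annihilate $V$, contradicting minimality unless $p$ is a unit.

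Your argument is more uniform with the rest of the paper (it reuses the exact reduction and \ref{support-m} machinery from the vanishing/tameness sections), while the paper's argument is a self-contained classical $D$-module fact (``a nonzero $\Lambda$-module has trivial $B$-annihilator'') that works without any support restriction and without invoking \ref{support-m}.
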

\begin{proof}
Set $Q = I \cap A$. Suppose if possible $ L = H^i_I(R)_c$ is a non-zero finitely generated $A$-module.

 Let $\m$ be a maximal prime  ideal in $A$ belonging to the support of $L$. Notice as $L$ is $Q$-torsion we have $\m \supseteq Q$. Let $B = \widehat{A_\m}$ the completion of $A$ \wrt \ $\m$. We note that the image of $Q$ in $B$ is non-zero.
 Set $J = QB$.
 
 Set $S = B[X_1,\ldots, X_m]$. We note that $H^i_I(R)\otimes_A B \cong H^i_{IS}(S)$. Set $V = L\otimes_A B = H^i_{IS}(S)_c$. Note $V$ is a finitely generated $B$-module.

 By Cohen-structure theorem $B = K[[Y_1,\ldots, Y_g]]$ where $K = \kappa(\m)$ the residue field of $A_\m$  and $g = \height_A \m$.
 Let $\Lambda $ be the ring of $K$-linear differential operators on $B$ and set $\D_m = A_m(\Lambda)$, the $m^{th}$-Weyl algebra
 over $\Lambda$. Then by  Theorem \ref{mom} we get that $N = H^i_{IB}(S) $ is a graded holonmic, generalized Eulerian
 $\D_m$-module.  In particular $V = N_c$ is a $\Lambda$-module.
 
 Let $V$ be generated as a $R$-module by $v_1,\cdots, v_l$. Each $v_j$ is killed by a power of $J$. It follows that there exists $n$ such that $J^n V = 0$. 
 Let $\n$ be the maximal ideal of $B$. Choose $p \in \n$ of smallest $\n$-order $s$ such that $p V = 0$. We note that for all $i = 1, \ldots, d$,
 $$\delta_i p = p \delta_i  + \delta_i(p),$$
 holds in $\Lambda$.  Notice $p \delta_i V = 0$ since $\delta_i V \subseteq V$ (as $V$ is a $\Lambda$-module).  So $\delta_i(p) V = 0$.
 
 We note that if $s \geq 1$ then some $\partial_i(p)$ will have $\n$-order less than $s$. It follows $s = 0$. Thus $p$ is a unit. So $V = 0$, a contradiction as we were assuming $V$ to be non-zero.
 Thus our assumption is incorrect. Therefore $ H^i_I(R)_c$ is NOT finitely generated as an $A$-module.
\end{proof}

\section{Bass numbers}
 \s \label{set-bass} \textit{Setup:} Let $A$ be a regular ring containing a field of characteristic zero. Let $R = A[X_1,\ldots, X_m]$ be standard graded. Let $\FF$ be a graded Lyubeznik functor on $\ ^* Mod(R)$ and set $M = \FF(R) = \bigoplus_{n \in \ZZ}M_n$. 

By example \ref{ex-3} it is possible that $\mu_i(P, M_n)$  the $i^{th}$-Bass number of $M_n$ \wrt \ $P$ can be infinite for some prime ideal $P$ of $A$. 
Surprisingly  we have the following dichotomy:
\begin{theorem}
\label{bass-basic-proof}(with hypotheses as in \ref{set-bass}).  Let $P$ be a prime ideal in $A$. Fix $j \geq 0$. EXACTLY one of the following hold:
\begin{enumerate}[\rm(i)]
\item
$\mu_j(P, M_n)$ is infinite for all $n \in \ZZ$.
\item
$\mu_j(P, M_n)$ is finite for all $n \in \ZZ$. In this case EXACTLY one of the following holds:
\begin{enumerate}[\rm (a)]
\item
$\mu_j(P, M_n) = 0$ for all $n \in \ZZ$.
\item
$\mu_j(P, M_n) \neq 0$ for all $n \in \ZZ$.
\item
$\mu_j(P, M_n) \neq 0$ for all $n  \geq 0$ and $\mu_j(P, M_n) = 0$ for all 
$n < 0$.
\item
$\mu_j(P, M_n) \neq 0$ for all $n  \leq -m$ and $\mu_j(P, M_n) = 0$ for all 
$n > -m$.
\item
$\mu_j(P, M_n) \neq 0$ for all $n  \leq -m$, $\mu_j(P, M_n) \neq 0$ for all $n  \geq 0$ and $\mu_j(P, M_n) = 0$ for all $n$ with $-m < n< 0$.
\end{enumerate}
\end{enumerate}
\end{theorem}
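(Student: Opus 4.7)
The plan is to follow the localization--completion--Koszul reduction that powers the other theorems in this paper. Localize $M$ at $P$ and complete: set $B = \widehat{A_P}$, $K = \kappa(P)$, $d = \height_A P$, so that $B \cong K[[Y_1,\ldots,Y_d]]$ by Cohen structure. By \ref{std-op}(b), $\FF \otimes_A B$ is a graded Lyubeznik functor on $\ ^* Mod(S)$ with $S = B[X_1,\ldots,X_m]$, and $N := M \otimes_A B$ is its value at $S$; by Theorem \ref{mom}, $N$ is a graded holonomic generalized Eulerian $\D_m$-module. Faithful flatness of $A_P \to B$ (and the fact that $K \otimes_{A_P} B = K$) gives $\mu_j(P, M_n) = \dim_K \Ext^j_B(K, N_n)$. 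Since the Koszul complex on $\bY = Y_1,\ldots,Y_d$ is a free resolution of $K$ over $B$ and is self-dual up to a degree shift, we obtain $\mu_j(P,M_n) = \dim_K H_{d-j}(\bY; N_n)$. Because $\deg Y_i = 0$, the Koszul complex is degree-preserving, so $V := H_{d-j}(\bY; N)$ is a graded module with $V_n = H_{d-j}(\bY; N_n)$. By Corollary \ref{full-koszul-hom}, $V$ is a graded holonomic generalized Eulerian $A_m(K)$-module, and the theorem is reduced to the identity $\mu_j(P,M_n) = \dim_K V_n$ together with a statement about such $V$.

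The core claim is the following dichotomy, which I would prove by induction on $m$: for a graded holonomic generalized Eulerian $A_m(K)$-module $V$, either $\dim_K V_n = \infty$ for every $n \in \ZZ$, or $\dim_K V_n < \infty$ for every $n \in \ZZ$. Equivalently, if $\dim_K V_c < \infty$ for some $c$, then $\dim_K V_n < \infty$ for every $n$. When $m = 1$, \ref{sushil}(3a) says that $H_l(\partial_1;V)$ for $l=0,1$ is concentrated in degree $-1$ and is finite-dimensional over $K$ (being the de Rham cohomology of a holonomic $A_1(K)$-module); the Koszul exact sequence then yields $V_n \cong V_0$ for $n \geq 0$ and $V_n \cong V_{-1}$ for $n \leq -1$ as $K$-vector spaces, while the sequence at $n = -1$ forces $\dim_K V_0$ and $\dim_K V_{-1}$ to be simultaneously finite or simultaneously infinite. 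When $m \geq 2$, by \ref{sushil}(2) the modules $H_l(X_m;V)$ for $l=0,1$ are graded holonomic generalized Eulerian $A_{m-1}(K)$-modules. Since $H_0(X_m;V)_c$ is a quotient of $V_c$ and $H_1(X_m;V)_{c+1}$ is a submodule of $V_c$, each has a finite-dimensional graded piece; the inductive hypothesis then promotes this to finite-dimensionality in every degree for both $H_0(X_m;V)$ and $H_1(X_m;V)$. The Koszul exact sequence
\[
0 \to H_1(X_m;V)_n \to V_{n-1} \to V_n \to H_0(X_m;V)_n \to 0
\]
then propagates finite-dimensionality from $V_c$ to all $V_n$, moving freely in either direction along the grading.

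Within case (ii), I would pinpoint which of (a)--(e) occurs. Because $V_n = 0$ iff $\mu_j(P,M_n) = 0$, Theorem \ref{rig-tame-dim-0} applied to $V$ shows that non-vanishing of $V_n$ for some $n \leq -m$ (respectively some $n \geq 0$) forces non-vanishing for all such $n$, and Theorem \ref{rig-tame-suprise-dim-0} (when $m \geq 2$) shows that a non-vanishing degree in the strip $-m < n < 0$ forces non-vanishing in every degree. Hence the non-vanishing set of $V$ is necessarily one of $\emptyset$, $\{n \leq -m\}$, $\{n \geq 0\}$, $\{n \leq -m\} \cup \{n \geq 0\}$, or $\ZZ$, matching patterns (a)--(e) exactly, and these five sets are pairwise distinct so exclusivity is automatic. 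The main obstacle is the inductive dichotomy of the second paragraph: it hinges on recognizing the Koszul modules $H_l(X_m;V)$ as holonomic generalized Eulerian $A_{m-1}(K)$-modules to which the induction hypothesis can be applied, and on the careful bookkeeping needed to propagate finite-dimensionality both upward and downward in the grading from a single starting degree $c$.
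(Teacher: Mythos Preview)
Your argument is correct, and the dichotomy proof in your second paragraph is essentially the paper's Proposition~\ref{bass-dim0} (the paper runs the induction via $\partial_m$ rather than $X_m$, but the mechanism is identical). Where you genuinely diverge is in the reduction step. The paper does not work with $N = M\otimes_A B$ directly; instead it first replaces $\FF$ by the composite Lyubeznik functor $H^j_{PR}\circ\FF$, so that the module carried to $B$ is $L$ with $L_n = (H^j_P(M_n))_P$. Each $L_n$ is then supported only at the maximal ideal of $B$, hence equals $E_B(K)^{\alpha_n}$, and Lyubeznik's lemma (Lemma~\ref{lyu-lemma}, verified via Proposition~\ref{lyu-lemma-hypoth}) identifies $\alpha_n = \mu_j(P,M_n)$. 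The paper then always takes the \emph{top} Koszul homology $H_d(\bY;L)$, which on $E_B(K)^{\alpha_n}$ returns $K^{\alpha_n}$ by \ref{inj-koszul}. In short, the paper absorbs the index $j$ into a local cohomology functor and then extracts $\alpha_n$ with $H_d$; you instead keep $N$ and absorb $j$ into the Koszul degree via self-duality, landing on $H_{d-j}(\bY;N)$.

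Your route is more direct: it bypasses Lemma~\ref{lyu-lemma} and the separate verification in Proposition~\ref{lyu-lemma-hypoth} that $(H^j_P(M_n))_P$ is injective, replacing them with the standard flat base-change for $\Ext$ and Koszul self-duality over the regular local ring $B$. The paper's route, while longer here, has the side benefit that Proposition~\ref{lyu-lemma-hypoth} is reused later (for instance in Lemma~\ref{injdim-dim}), so its extra machinery earns its keep elsewhere. Both reductions deliver a graded holonomic generalized Eulerian $A_m(K)$-module $V$ with $\dim_K V_n = \mu_j(P,M_n)$, after which the classification into patterns (a)--(e) via Theorems~\ref{rig-tame-dim-0} and~\ref{rig-tame-suprise-dim-0} proceeds identically.
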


We will need the following Lemma from \cite[1.4]{Lyu-1}.
\begin{lemma}\label{lyu-lemma}
Let $B$ be a Noetherian ring and let $N$ be a $B$-module ($N$ need not be finitely generated).
Let $P$ be a prime ideal in $B$. If $(H^j_P(N))_P$ is injective for all $j \geq 0$ then
$\mu_j(P,N) = \mu_0(P,H^j_P(N))$ for all $j \geq 0$.
\end{lemma}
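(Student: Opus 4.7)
The plan is to derive the identity from a Grothendieck (composition-of-functors) spectral sequence, after localizing at $P$ and using flat base change. First I would replace $B$ by $B_P$ and $N$ by $N_P$; by flat base change, $(H^j_P(N))_P = H^j_{PB_P}(N_P)$, so the hypothesis becomes that $H^j_{PB_P}(N_P)$ is an injective $B_P$-module for every $j\geq 0$, and the conclusion becomes
\[
\dim_{k(P)} \Ext^j_{B_P}(k(P), N_P) \;=\; \dim_{k(P)} \Hom_{B_P}\!\bigl(k(P),\, H^j_{PB_P}(N_P)\bigr),
\]
for every $j\geq 0$. This reduces the problem to the local case $(B_P, PB_P)$ with residue field $k(P)$.

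Next I would observe the functorial factorization $\Hom_{B_P}(k(P), -) = \Hom_{B_P}(k(P), \Gamma_{PB_P}(-))$, which holds because any $B_P$-linear map from $k(P)$ has image killed by $PB_P$, hence lies in the $PB_P$-torsion submodule. Since $\Gamma_{PB_P}$ sends injective $B_P$-modules to injective $B_P$-modules (a standard fact: the $P$-torsion part of an injective is injective), the Grothendieck spectral sequence for the composition is available and yields
\[
E_2^{p,q} \;=\; \Ext^p_{B_P}\!\bigl(k(P),\, H^q_{PB_P}(N_P)\bigr) \;\Longrightarrow\; \Ext^{p+q}_{B_P}(k(P),\, N_P).
\]

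Under the standing hypothesis, $H^q_{PB_P}(N_P)$ is an injective $B_P$-module for each $q$, so $E_2^{p,q} = 0$ whenever $p\geq 1$. The spectral sequence therefore degenerates on the $q$-axis and produces canonical isomorphisms
\[
\Ext^j_{B_P}\!\bigl(k(P),\, N_P\bigr) \;\cong\; \Hom_{B_P}\!\bigl(k(P),\, H^j_{PB_P}(N_P)\bigr)
\]
for all $j\geq 0$. Taking $k(P)$-dimensions on both sides and using flat base change to rewrite the right-hand side as $\mu_0(P, H^j_P(N))$ gives the desired equality $\mu_j(P,N) = \mu_0(P, H^j_P(N))$.

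The main technical point — really the only non-formal ingredient — is verifying that $\Gamma_{PB_P}$ preserves injectives so that the Grothendieck spectral sequence is legitimate; everything else is the formal degeneration of an $E_2$ spectral sequence supported on a single row. No finiteness hypothesis on $N$ is used, which is essential for the intended application where $N = M_n$ is a graded component of a graded local cohomology module and need not be finitely generated over $A$.
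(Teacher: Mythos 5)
Your argument is correct, but note that the paper does not supply a proof of this lemma at all; it is quoted directly from Lyubeznik's paper (cited there as Lemma 1.4), and the present paper relies on that citation. So there is no internal proof to compare against.

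As for the merits of your proof: the reduction to the local case via flat base change, the factorization $\Hom_{B_P}(k(P),-)=\Hom_{B_P}(k(P),\Gamma_{PB_P}(-))$, the observation that $\Gamma_{PB_P}$ preserves injectives (so the Grothendieck spectral sequence is legitimate), and the degeneration to the $p=0$ column under the injectivity hypothesis are all valid, and together they do establish $\mu_j(P,N)=\mu_0(P,H^j_P(N))$. Lyubeznik's original argument is of a slightly different flavor: he applies $\Gamma_{PB_P}$ to a minimal injective resolution $E^\bullet$ of $N_P$, observes that $\Gamma_{PB_P}(E^\bullet)$ is a complex of $P$-torsion injectives with injective cohomology, and uses the resulting splitting of that complex together with minimality (which forces the split-exact summands to vanish) to conclude $\Gamma_{PB_P}(E^j)\cong H^j_{PB_P}(N_P)$ and hence the equality of Bass numbers. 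The two routes are essentially equivalent in content; the spectral-sequence phrasing you chose is more formulaic and avoids the bookkeeping about splittings of complexes of injectives, at the cost of invoking the Grothendieck spectral sequence, while Lyubeznik's is more hands-on. Either is a complete proof, and neither requires any finiteness hypothesis on $N$, which is the crucial feature for the paper's application.
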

The following result shows that the hypothesis of Lemma \ref{lyu-lemma} is satisfied in our case:
\begin{proposition}\label{lyu-lemma-hypoth}
(with hypotheses as in \ref{set-bass}).  Let $P$ be a prime ideal in $A$.  Let $N = M_c$ and let $P$ be a prime ideal in $A$. Then $(H^j_P(N))_P$ is injective for all $j \geq 0$
\end{proposition}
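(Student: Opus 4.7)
The plan is to reduce to the complete local case via a flat base change, then exploit the fact that a graded $\D_m$-module component which is supported only at the maximal ideal of the coefficient ring must be a direct sum of copies of the injective hull of the residue field, and hence injective. I outline the steps below.

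First, I would pass to $B = \widehat{A_P}$ and $S = B[X_1,\ldots,X_m]$. By \ref{std-op}(a) and (b), there is a graded Lyubeznik functor $G$ on $\ ^*Mod(S)$ with $G(S) = \FF(R) \otimes_A B$, so $G(S)_c = N \otimes_A B$. By Cohen structure $B = K[[Y_1,\ldots,Y_g]]$ with $K = \kappa(P)$ and $g = \height P$, so by Theorem \ref{mom} the module $G(S)$ is a graded holonomic, generalized Eulerian $\D_m$-module in the sense of \ref{setup-graded-holonomic}. Set $\n = PB = (Y_1,\ldots,Y_g)B$, the unique maximal ideal of $B$.

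Next, since localization is exact and $B$ is flat over $A_P$, one has
\[
H^j_P(N)_P \otimes_{A_P} B \;=\; H^j_{PA_P}(N_P) \otimes_{A_P} B \;=\; H^j_{\n}\bigl(N\otimes_A B\bigr) \;=\; H^j_{\n}\bigl(G(S)_c\bigr).
\]
The ideal $\n S$ is homogeneous in $S$ (generated in degree $0$), so $H^j_{\n S}(G(S))$ is a graded $\D_m$-module; computing via the \v{C}ech complex on $Y_1,\ldots,Y_g$ (whose terms are in degree $0$) shows
\[
H^j_{\n}\bigl(G(S)_c\bigr) \;=\; H^j_{\n S}\bigl(G(S)\bigr)_c.
\]
Because $H^j_{\n S}(G(S))$ is $\n S$-torsion and each element of its $c$-th component is killed by some power of $\n S$, hence by a power of $\n$, the $B$-module $H^j_{\n S}(G(S))_c$ is supported only at $\n$. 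By \ref{support-m} we conclude
\[
H^j_{\n S}\bigl(G(S)\bigr)_c \;\cong\; E_B(K)^{\alpha}
\]
for some (possibly infinite) cardinal $\alpha$, which is injective over $B$.

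Finally, I would descend injectivity from $B$ back to $A_P$. Since the module $H^j_{PA_P}(N_P)$ is $PA_P$-torsion, every element has annihilator containing a power of $PA_P$; thus the $A_P$-module and $B$-module structures agree on it, and an $A_P$-module supported at $PA_P$ is injective over $A_P$ if and only if its completion (equivalently, its base change to $B$) is injective over $B$ — both categories being equivalent to products of copies of the respective injective hulls of the residue field, which are themselves identified under the base change. Hence $H^j_P(N)_P$ is injective over $A_P$, proving the proposition. The main obstacle is really the identification in Step 2, namely that taking $c$-th graded component commutes with applying $H^j_{\n S}$; once this is in hand, the crucial \ref{support-m} does all the work. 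Everything else is bookkeeping about flat base change and the equivalence of injectivity of torsion modules under completion.
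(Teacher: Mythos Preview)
Your proposal is correct and follows essentially the same approach as the paper. The only organizational difference is that the paper composes the Lyubeznik functors first---taking $H^j_{PR}\circ\FF$ as a single graded Lyubeznik functor on $\ ^*Mod(R)$, noting $(H^j_{PR}\circ\FF)(R)_c = H^j_P(N)$, and only then base-changing to $B=\widehat{A_P}$---whereas you base-change $\FF$ first and apply $H^j_{\n S}$ afterwards; this lets the paper skip your explicit check that local cohomology commutes with taking the $c$-th component, and lets it invoke \ref{min-loc} together with the identification $E_B(K)\cong E_{A_P}(\kappa(P))\cong E_A(A/P)$ as $A$-modules to replace your final descent paragraph with a single line.
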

\begin{proof}
Fix $j \geq 0$.
We note that $H^j_{PR}\circ\FF$ is a graded Lyubeznik functor on $\ ^*Mod(R)$. Also notice $H^j_{P}(N) = (H^j_{PR}(\FF(R)))_c$. Finally note that either 
$H^j_P(N)_P = 0$ or $P$ is a minimal prime of $H^j_P(N)$. 

We have nothing to show if $H^j_P(N)_P = 0$. So assume  $H^j_P(N) \neq 0$. 
Let $B = \widehat{A_P}$. Also 
 set $S = B[X_1,\ldots, X_m]$.
 We note that by \ref{std-op} the functor $G(-) = B \otimes_A H^j_{PR}\circ\FF(-)$ is a graded Lyubeznik functor on $ \ ^* Mod(S)$.
 
 By Cohen-structure theorem $B = K[[Y_1,\ldots, Y_g]]$ where $K = \kappa(P)$ the residue field of $A_P$  and $g = \height_A P$.
 Let $\Lambda $ be the ring of $K$-linear differential operators on $B$ and set $\D_m = A_m(\Lambda)$ the $m^{th}$-Weyl algebra
 over $\Lambda$. Then by  Theorem \ref{mom} we get that $L = G(R)$ is a graded holonomic, generalized Eulerian
 $\D_m$-module. Notice  by \ref{min-loc} we get that 
 $L_c = N_P \neq 0$. 
 
 As $P$ is the minimal prime of $M_c$ we get that $L_c$ is supported  ONLY at the maximal ideal of $B$. By \ref{support-m} we get that $L_c = E_B(K)^\alpha$ where 
 $E_B(K)$ is the injective hull of $K$ as a $B$-module (and $\alpha$ some ordinal possibly infinite). 
 But we have
 \[
 E_B(K) \cong E_{A_P}(\kappa(P)) \cong E_A(A/P) \quad \text{as $A$-modules.}
 \]
Thus $(H^j_P(N))_P$  is an injective $A$-module.
\end{proof}
The following result is an essential ingredient to prove Theorem \ref{bass-basic-proof}.
\begin{proposition}\label{bass-dim0}
Let $M = \bigoplus_{n \in \ZZ}M_n$ be a graded holonomic  $ A_m(K)$-module. If \\ $\dim_K M_c < \infty$ for some $c \in \ZZ$ then $\dim_K M_n < \infty$ for all $n \in \ZZ$.
\end{proposition}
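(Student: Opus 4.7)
The plan is to induct on $m$. The base case $m=0$ is vacuous: $A_0(K) = K$, a holonomic $K$-module is a finite-dimensional $K$-vector space, and hence every graded component is finite-dimensional.

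For the inductive step $m \ge 1$, assume the result for $m-1$ and let $M$ be a graded holonomic $A_m(K)$-module with $\dim_K M_c < \infty$. Since $\partial_m$ commutes with $X_i$ and $\partial_i$ for $i < m$, multiplication by $\partial_m$ is $A_{m-1}(K)$-linear, so the Koszul homology modules $N_0 := H_0(\partial_m; M)$ and $N_1 := H_1(\partial_m; M)$ are naturally graded $A_{m-1}(K)$-modules. By the classical Koszul reduction for holonomic modules (\cite[3.4.2 and 3.4.4]{Bjork}, as invoked in the proof of Theorem \ref{Koszul-local}, but which requires only holonomicity of $M$ and no Eulerian structure), both $N_0$ and $N_1$ are holonomic over $A_{m-1}(K)$. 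Viewing $\partial_m$ as the degree-zero map $M(1) \to M$, for each $j \in \ZZ$ we obtain an exact sequence of $K$-vector spaces
\[
0 \longrightarrow (N_1)_j \longrightarrow M_{j+1} \xrightarrow{\partial_m} M_j \longrightarrow (N_0)_j \longrightarrow 0.
\]

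Specializing this sequence to $j = c$ shows $(N_0)_c$ is a quotient of $M_c$ and hence $K$-finite, while specializing to $j = c-1$ shows $(N_1)_{c-1}$ embeds in $M_c$ and hence is $K$-finite. The inductive hypothesis applied to the graded holonomic $A_{m-1}(K)$-modules $N_0$ and $N_1$ then yields $\dim_K (N_l)_j < \infty$ for every $j \in \ZZ$ and $l = 0, 1$. The exact sequence now gives
\[
\dim_K M_{j+1} \le \dim_K (N_1)_j + \dim_K M_j, \qquad \dim_K M_j \le \dim_K M_{j+1} + \dim_K (N_0)_j,
\]
so $K$-finiteness of $M_c$ propagates in both directions along $j$; inducting on $|n - c|$ gives $\dim_K M_n < \infty$ for every $n \in \ZZ$.

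The only delicate step is the assertion that $N_0$ and $N_1$ are holonomic over the smaller Weyl algebra $A_{m-1}(K)$, rather than merely over $A_m(K)$. This is Bjork's classical Koszul reduction for holonomic modules under killing by a single partial, and crucially it depends only on holonomicity of $M$ and not on any Eulerian hypothesis. This is what makes the same Koszul strategy used throughout \S\ref{Koszul-M} available here under the weaker hypothesis of Proposition \ref{bass-dim0}.
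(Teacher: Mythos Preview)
Your proof is correct and follows essentially the same approach as the paper: induct on $m$, use the exact Koszul sequence for $\partial_m$ to reduce to $A_{m-1}(K)$, observe that $(N_0)_c$ and $(N_1)_{c-1}$ inherit finite $K$-dimension from $M_c$, apply the inductive hypothesis to make all components of $N_0,N_1$ finite-dimensional, and then propagate along $j$. The only cosmetic difference is that you anchor the induction at $m=0$ (where holonomic means finite-dimensional over $K$), whereas the paper starts at $m=1$ and uses directly that $H_l(\partial_1,M)$ is a finite-dimensional $K$-space; these amount to the same thing.

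One small remark on your citation: the reference \cite[3.4.2, 3.4.4]{Bjork} is invoked in the paper for Koszul homology with respect to a power-series variable $Y_d$, not with respect to $\partial_m$. The fact that $H_l(\partial_m;M)$ is holonomic over $A_{m-1}(K)$ when $M$ is holonomic over $A_m(K)$ is of course standard (e.g., by the Fourier transform on $A_m(K)$ interchanging $X_m$ and $\partial_m$, or directly from Bernstein's theory in Chapter~1 of \cite{Bjork}), and the paper simply asserts it without reference; but the specific passage you cite is not quite the right one for this statement.
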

\begin{proof}
We prove it by induction on $m$.
We first assume $m = 1$. We have an exact sequence
\begin{equation}\label{bass-eqn}
0 \rt H_1(\partial_1, M)_j \rt M_{j+1} \rt M_j \rt H_0(\partial_1, M)_j \rt 0. 
\end{equation}
As $M$ is holonomic we have that $\dim_K H_l(\partial_1, M) < \infty $ for $l = 0,1$. In particular $\dim_K H_l(\partial_1, M)_j < \infty $ for all $j \in \ZZ$ and $l = 0,1$.
By \ref{bass-eqn} we get that $\dim_K M_{c+1} < \infty$. Iterating we get $\dim_K M_j < \infty$ for all $j \geq c$. Again by \ref{bass-eqn} we get that $\dim_K M_{c-1} < \infty$. Iterating we get $\dim_K M_j < \infty$ for all $j \leq c$. The result follows.

We now assume that $m \geq 2$ and the result is known for $m-1$. We have an exact sequence
\begin{equation}\label{bass-eqn-2}
0 \rt H_1(\partial_m, M)_j \rt M_{j+1} \rt M_j \rt H_0(\partial_m, M)_j \rt 0. 
\end{equation}
As $M$ is holonomic we have that $ H_l(\partial_m, M) $ is holonomic $A_{m-1}(K)$-module for $l = 0,1$. By \ref{bass-eqn-2} we get that $\dim_K H_1(\partial_m, M)_{c-1} < \infty $ and $\dim_K H_0(\partial_m, M)_{c} < \infty $. By induction hypothesis we get that  $ \dim_K H_l(\partial_m, M)_j < \infty $ for all $j \in \ZZ$ and $l = 0,1$. By \ref{bass-eqn-2} we get that $\dim_K M_{c+1} < \infty$. Iterating we get $\dim_K M_j < \infty$ for all $j \geq c$. Again by \ref{bass-eqn-2} we get that $\dim_K M_{c-1} < \infty$. Iterating we get $\dim_K M_j < \infty$ for all $j \leq c$. The result follows.
\end{proof}

We now give
\begin{proof}[Proof of Theorem \ref{bass-basic-proof}]
Let $P$ be a prime ideal in $A$. Fix $j \geq 0$. Suppose if possible $\mu_j(P, M_c) < \infty$ for some $c \in \ZZ$. We show that  $\mu_j(P, M_n) < \infty$ for all $n \in \ZZ$.

By Lemma \ref{lyu-lemma-hypoth} and Proposition \ref{lyu-lemma} we get 
that $\mu_j(P, M_n) = \mu_0(P, H^j_P(M_n))$ for all $n \in \ZZ$.
We note that $(H^j_{PR}\circ \FF)(R)_n = H^j_P(M_n)$ for all $n \in \ZZ$. Furthermore $H^j_{PR}\circ\FF$ is a graded Lyubeznik functor on $\ ^*Mod(R)$.

Let $B = \widehat{A_P}$. Also 
 set $S = B[X_1,\ldots, X_m]$.
 We note that by \ref{std-op} the functor $G(-) = B \otimes_A H^j_{PR}\circ\FF(-)$ is a graded Lyubeznik functor on $ \ ^* Mod(S)$.
 
 By Cohen-structure theorem $B = K[[Y_1,\ldots, Y_g]]$ where $K = \kappa(P)$ the residue field of $A_P$  and $g = \height_A P$.
 Let $\Lambda $ be the ring of $K$-linear differential operators on $B$ and set $\D_m = A_m(\Lambda)$ the $m^{th}$-Weyl algebra
 over $\Lambda$. Then by  Theorem \ref{mom} we get that $L = G(R)$ is a graded holonmic, generalized Eulerian
 $\D_m$-module. Notice  by \ref{min-loc} we get that 
 $L_n = (H^j_P(M_n))_P $ for all $n \in \ZZ$.
 
 Note that either $(H^j_P(M_n))_P  = 0$ OR
  $P$ is the minimal prime of $(H^j_P(M_n))$. Thus we get that $L_n$ is supported  ONLY at the maximal ideal of $B$ for all $n \in \ZZ$. By \ref{support-m} we get that $L_n = E_B(K)^{\alpha_n}$ where 
 $E_B(K)$ is the injective hull of $K$ as a $B$-module (and $\alpha$ some ordinal possibly infinite). We note that
 \begin{enumerate}
 \item
 $\alpha_n = \mu_j(P, M_n)$.
 \item
 $\alpha_c < \infty$.
 \end{enumerate}
    By \ref{key-remark} we get that
$V = H_d(\bY, L)$ is a graded holonomic, generalized Eulerian $A_m(K)$-module with $V_n = H_d(\bY, L)_n = K^{\alpha_n}$ for all $n \in \ZZ$.

Now $\dim_K V_c = \alpha_c < \infty$. By Proposition \ref{bass-dim0} we get $\dim_K V_n < \infty $ for all $n \in \ZZ$. It follows that $\mu_j(P, M_n) = \dim_K V_n < \infty$ for all $n \in \ZZ$.  Finally we note that the assertions (a), (b), (c), (d) and (e) follow from Theorems \ref{Weyl} \ref{rig-tame-dim-0} and  \ref{rig-tame-suprise-dim-0}.
 \end{proof}
 
 \section{Proof of Theorem \ref{bass-m-one}}
In this section we give a proof of Theorem \ref{bass-m-one}. The following result is crucial.
\begin{lemma}\label{lemma-bass-m-one}
Let $M = \bigoplus_{n \in \ZZ } M_n$ be a graded holonomic, generalized Eulerian $A_1(K)$-module. Then $\dim_K M_n < \infty$ for all $n \in \ZZ$.
\end{lemma}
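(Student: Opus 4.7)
The plan is to reduce the statement to the finite-dimensionality of $M_0$ and $M_{-1}$, and then prove these two cases via a Noetherianness argument driven by the finite length of holonomic $A_1(K)$-modules.

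By Lemma \ref{sushil}(3)(a), both $H_0(\partial_1;M)$ and $H_1(\partial_1;M)$ are concentrated in degree $-1$. The Koszul exact sequence
\[
0 \to H_1(\partial_1, M)_j \to M_{j+1} \xrightarrow{\partial_1} M_j \to H_0(\partial_1, M)_j \to 0
\]
then forces $\partial_1 \colon M_{j+1} \to M_j$ to be an isomorphism for every $j \neq -1$. Iterating, one concludes $M_n \cong M_0$ for all $n \geq 0$ and $M_n \cong M_{-1}$ for all $n \leq -1$, so it remains to show $\dim_K M_0 < \infty$ and $\dim_K M_{-1} < \infty$.

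For $M_0$, the degree-zero component of $A_1(K)$ is exactly $K[\E]$ (since each $X_1^j\partial_1^j$ is a polynomial in $\E = X_1\partial_1$), so $M_0$ is naturally a $K[\E]$-module, and the generalized Eulerian hypothesis forces $\E$ to act locally nilpotently on $M_0$. If I can show that $M_0$ is Noetherian over $K[\E]$, then writing $M_0 = K[\E] v_1 + \cdots + K[\E] v_k$ with $\E^{a_i} v_i = 0$ immediately gives $\dim_K M_0 \leq a_1 + \cdots + a_k < \infty$. For Noetherianness: any ascending chain of $K[\E]$-submodules $W_1 \subseteq W_2 \subseteq \cdots$ in $M_0$ generates an ascending chain of graded $A_1(K)$-submodules $A_1(K)W_1 \subseteq A_1(K)W_2 \subseteq \cdots$ of $M$; because $M$ is holonomic and therefore of finite length over $A_1(K)$, this latter chain stabilizes. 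But the identity $(A_1(K)W_n) \cap M_0 = K[\E] W_n = W_n$ (using that $W_n$ is $K[\E]$-stable and that $A_1(K)W_n$ is graded with $\deg \E = 0$) lets us recover stabilization of the original chain. The argument for $M_{-1}$ is symmetric, now with the locally nilpotent operator $\partial_1 X_1 = \E + 1$ and the observation $K[\E] = K[\E+1]$.

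The main obstacle is the passage between the $A_1(K)$-submodule lattice of $M$ and the $K[\E]$-submodule lattice of the graded component $M_0$; the key identification $(A_1(K)W) \cap M_0 = W$ for graded $K[\E]$-stable $W \subseteq M_0$ is what allows finite length of $M$ as an $A_1(K)$-module to transfer to Noetherianness of $M_0$ as a $K[\E]$-module, and it crucially relies on the computation $A_1(K)_0 = K[\E]$.
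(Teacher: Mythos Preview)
Your proof is correct and follows essentially the same route as the paper: identify $A_1(K)_0 = K[\E]$, use holonomicity to show $M_0$ is Noetherian over $K[\E]$ via the intersection identity $(A_1(K)W)\cap M_0 = W$, and then invoke the generalized Eulerian condition to conclude finite $K$-dimension. The only cosmetic difference is in the reduction step: the paper cites Proposition~\ref{bass-dim0} to reduce everything to $\dim_K M_0 < \infty$, whereas you argue directly that $M_n \cong M_0$ for $n\geq 0$ and $M_n \cong M_{-1}$ for $n\leq -1$ and then treat $M_{-1}$ by the symmetric argument with $\E+1$.
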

 \begin{proof}
By Proposition \ref{bass-dim0} it suffices to show $\dim_K M_0 < \infty$. Let 
$D = A_1(K) = K<X, \partial>/(\partial X - X \partial -1)$. An element $d$ of $D$ is uniquely expressed as $\sum \alpha_{ij} X^i\partial^j$ with $\alpha_{ij} \in K$ and $\alpha_{ij} = 0$ for all but finitely many $i,j$. Thus  if $d \in D_0$ the degree zero component of $D$, then $d$ is a finite  linear sum of 
$\{ X^i\partial^i ; i \geq 0 \}$. 

Let $\E = X\partial$ be the Eulerian operator. Then it is well-known that
$X^i\partial^i$ can be expressed as a polynomial in $\E$. In fact by 
\cite[Lemma 1.3.1]{SST}
for $j \geq 2$ we have
\[
X^j\partial^j = \E(\E -1)\ldots (\E - j + 1).
\]
It follows that $D_0 = K[\E]$.

Let $V$ be a $D_0$-submodule of $M_0$. It is easy to prove that $(D V) \cap M_0  = V$. \\
\textit{Claim:} $M_0$ is a finitely generated $D_0$-module.\\
Let $$V_1 \subseteq V_2 \subseteq  \cdots \subseteq V_l \subseteq V_{l+1} \subseteq  \cdots, $$
be an ascending chain of $D_0$-submodules of $M_0$.
Then we have an ascending chain of $D$-submodules of $M$;
$$DV_1 \subseteq DV_2 \subseteq  \cdots \subseteq DV_l \subseteq DV_{l+1} \subseteq  \cdots. $$
As $M$ is a Noetherian $D$-module there exists $r$ such that $DV_j = DV_r$ for all $j \geq r$. It follows that
\[
V_j = (DV_j)\cap M_0 = (DV_r)\cap M_0 = V_r; \quad \text{for all} \ j \geq r.
\]
Thus $M_0$ is a Noetherian $D_0$-module. In particular it is finitely generated as a $D_0$-module.

Let $M_0$ be generated as a $D_0$-module by $\{ u_1,\ldots, u_s \}$. Now $M$ is a
generalized  Eulerian $D$-module. In particular there exists $r_i$ such that
\[
(\E)^{r_i}u_i = (\E - |u_i|)^{r_i}u_i  = 0 \quad \text{for all} \ i.
\]
Let $r = \max \{ r_i \}$. Let $v \in M_0$. Then
\[
u = \alpha_1 u_1 + \cdots + \alpha_s u_s \quad \text{where} \ \alpha_i \in K[\E].
\]
It follows that $\E^r u = 0$. Thus $M_0$ is a finitely generated $k[\E]/(\E^r)$-module. In particular $\dim_K M_0 < \infty$.
\end{proof}
We now give
\begin{proof}[Proof of Theorem \ref{bass-m-one}]
We do the same construction as in the proof of Theorem \ref{bass-basic-proof}.
 By Lemma \ref{lemma-bass-m-one}   we get $ \alpha_n = \dim_K V_n < \infty $ for all $n \in \ZZ$. It follows that $\mu_j(P, M_n) < \infty$ for all $n \in \ZZ$.
\end{proof}
\section{Growth of Bass numbers}
In this section we prove Theorems \ref{bass-growth} and Theorem \ref{bass-growth-max}. To prove Theorem \ref{bass-growth} we need to prove the following result:
\begin{theorem}
\label{GB-hol} Let $M = \bigoplus_{n \in \ZZ}M_n$ be a graded holonomic generalized Eulerian $A_m(K)$-module with $\dim_K M_n < \infty$ for all $n \in \ZZ$. Then there exists polynomials $P_M(z), Q_M(z)\in \mathbb{Q}[z]$  of degree $\leq m -1$ such that 
\begin{align*}
P_M(n) &= \dim_K M_n  \ \text{for all} \  n \ll 0, and \\
Q_M(n) &= \dim_K M_n  \ \text{for all} \  n \gg 0.
\end{align*}
\end{theorem}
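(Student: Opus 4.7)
My plan is to prove Theorem \ref{GB-hol} by induction on $m$, using the Koszul exact sequences in $\partial_m$ supplied by Lemma \ref{sushil} and then converting polynomial behaviour of the first difference $d(n+1)-d(n)$ (where $d(n) := \dim_K M_n$) into polynomial behaviour of $d$ itself via a standard finite-difference argument.

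For the base case $m = 1$, Lemma \ref{sushil}(3)(a) shows that $H_l(\partial_1, M)$ is concentrated in degree $-1$ for $l = 0, 1$, so the Koszul exact sequence
\[
0 \to H_1(\partial_1, M)_n \to M_{n+1} \to M_n \to H_0(\partial_1, M)_n \to 0
\]
forces $d(n+1) = d(n)$ for every $n \neq -1$. Hence $d$ is eventually constant for $n \ll 0$ and for $n \gg 0$, and a constant is a polynomial of degree $\leq 0 = m - 1$.

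For the inductive step, assume the result for $m - 1$. By Lemma \ref{sushil}(1), each $N_l := H_l(\partial_m, M)(-1)$ is a generalized Eulerian $A_{m-1}(K)$-module; moreover $H_l(\partial_m, M)$ is holonomic over $A_{m-1}(K)$ by the classical stability of holonomicity under partial Koszul homology over Weyl algebras \cite[3.4.2, 3.4.4]{Bjork}. From the exact sequence
\[
0 \to H_1(\partial_m, M)_n \to M_{n+1} \to M_n \to H_0(\partial_m, M)_n \to 0
\]
and the hypothesis that every $M_j$ is finite-dimensional, each component of $H_l(\partial_m, M)$ is finite-dimensional, so the inductive hypothesis applies to $N_0$ and $N_1$. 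This yields polynomials $p_l, q_l \in \mathbb{Q}[z]$ of degree $\leq m - 2$ with $\dim_K H_l(\partial_m, M)_n = p_l(n+1)$ for $n \ll 0$ and $= q_l(n+1)$ for $n \gg 0$. The same exact sequence then gives
\[
d(n+1) - d(n) = \dim_K H_1(\partial_m, M)_n - \dim_K H_0(\partial_m, M)_n,
\]
which agrees with a polynomial $r(n)$ of degree $\leq m - 2$ for $n \ll 0$ and with a polynomial $s(n)$ of degree $\leq m - 2$ for $n \gg 0$. Using the binomial basis $\binom{z}{k}$ of $\mathbb{Q}[z]$ one builds a polynomial $P_M(z)$ of degree $\leq m - 1$ with $P_M(z + 1) - P_M(z) = r(z)$; fixing its constant term so that $P_M$ agrees with $d$ at one sufficiently negative integer propagates the identity $P_M(n) = d(n)$ to every $n \ll 0$. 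Repeating with $s$ produces $Q_M$, completing the induction.

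The only real obstacle is the bookkeeping in the finite-difference step and in tracking the degree shift $(-1)$ in $N_l$ through the induction consistently; the conceptual input is entirely Lemma \ref{sushil} together with the standard fact that Koszul homology of a holonomic $A_m(K)$-module with respect to a single partial derivation is a holonomic $A_{m-1}(K)$-module.
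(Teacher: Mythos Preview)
Your proof is correct and follows essentially the same inductive scheme as the paper: induct on $m$, use the Koszul exact sequence to express the first difference $d(n+1)-d(n)$ in terms of the homology modules, apply the inductive hypothesis to those, and conclude that $d$ is of polynomial type of degree $\leq m-1$.

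The one noteworthy difference is that the paper treats the two asymptotic directions with two different Koszul complexes: it uses the $\partial_m$-sequence (and Lemma~\ref{sushil}(1),(3)(a)) to control $d(n)$ for $n \ll 0$, and the $X_m$-sequence (and Lemma~\ref{sushil}(2),(3)(b)) to control $d(n)$ for $n \gg 0$. You observe, correctly, that the $\partial_m$-sequence alone already handles both directions, since the inductive hypothesis gives polynomial behaviour of $\dim_K H_l(\partial_m,M)_n$ for $n \gg 0$ as well as for $n \ll 0$; in the base case this shows up as $d(n+1)=d(n)$ for every $n \neq -1$, so $d$ is constant on each of $(-\infty,-1]$ and $[0,\infty)$ without ever invoking $X_1$. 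Your route is therefore slightly more economical; the paper's symmetric treatment is not needed for this theorem, though it mirrors the way the $X$- and $\partial$-sequences are used elsewhere (e.g.\ in Theorems~\ref{rig-tame-dim-0} and \ref{rig-tame-suprise-dim-0}). Your explicit citation of \cite[3.4.2, 3.4.4]{Bjork} for the holonomicity of $H_l(\partial_m,M)$ over $A_{m-1}(K)$ is also a point the paper leaves implicit.
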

\begin{remark}
By our convention degree of the zero polynomial is $-\infty$.
\end{remark}
\begin{proof}[Proof of Theorem \ref{GB-hol}]
We prove the result by induction on $m$. We first assume $m = 1$.

We have an exact sequence
 \begin{equation}
  \label{bass-induct-1}
  0 \rt H_1(\partial_1, M)_j \rt M_{j+1} \rt M_j \rt H_0(\partial_1, M)_j \rt 0.
 \end{equation}
By \ref{sushil} we get that  for $l = 0,1$; $H_l(\partial_1, M)$ is concentrated in degree $-1$.
So by \ref{bass-induct-1} it follows that 
\[
 M_j \cong M_{-1} \quad \text{for all} \ j \leq -1.
\]
Set $P_M(z) = \dim_K M_{-1}$.

 We  also have an exact sequence
 \begin{equation}
  \label{bass-induct-3}
  0 \rt H_1(X_1, M)_j \rt M_{j-1} \rt M_j \rt H_0(X_1, M)_j \rt 0.
 \end{equation}
By \ref{sushil} we get that for $l = 0, 1$; $H_l(X_1, M)$ is concentrated in degree $0$.
So by \ref{bass-induct-3} it follows that 
\[
 M_j \cong M_{0} \quad \text{for all} \ j \geq 0.
\]
Set $Q_M(z) = \dim_K M_{0}$.
Thus we have the result for $m = 1$.

Assume $m \geq 2$ and the result is known for $m -1$.
We have an exact sequence
 \begin{equation}
  \label{bass-induct-2}
  0 \rt H_1(\partial_m, M)_j \rt M_{j+1} \rt M_j \rt H_0(\partial_m, M)_j \rt 0.
 \end{equation}

By \ref{sushil} $H_l(\partial_m; M)(-1)$ is generalized Eulerian $A_{m-1}(K)$-module for $l = 0, 1$. Furthermore  $ \dim_K H_l(\partial_m; M)_j < \infty$  for all $j \in \ZZ$ and $l = 0, 1$.  So by induction hypothesis it follows that there exists polynomials $f(z), g(z)$ of degree $\leq m-2$ such that  for all $n \ll 0$, 
\[
f(n) = \dim_K H_1(\partial_m; M)_n  \   \text{and} \ g(n) = \dim_K H_0(\partial_m; M)_n. 
\]
By \ref{bass-induct-2} we have 
\[
\dim_K M_{n+1} - \dim_K M_n = f(n) - g(n) \quad \text{for all } \ n \ll 0.
\]
It follows that the function $n \rt \dim_K M_n$ (with $n < 0$) is of polynomial type of degree $\leq m -1$.

We also have an exact sequence
 \begin{equation}
  \label{bass-induct-4}
  0 \rt H_1(X_m, M)_j \rt M_{j-1} \rt M_j \rt H_0(X_m, M)_j \rt 0.
 \end{equation}

By \ref{sushil} $H_l(X_m; M)$ is generalized Eulerian $A_{m-1}(K)$-module for $l = 0,1$.  Furthermore  $ \dim_K H_l(X_m; M)_j < \infty$  for all $j \in \ZZ$ and $l = 0, 1$. 
So by induction hypothesis it follows that there exists polynomials $h(z), t(z)$ of degree $\leq m-2$ such that  for all $n \gg 0$, 
\[
h(n) = \dim_K H_1(X_m; M)_n  \   \text{and} \ t(n) = \dim_K H_0(X_m; M)_n. 
\]
By \ref{bass-induct-4} we have 
\[
\dim_K M_{n} - \dim_K M_{n-1} = t(n) - h(n) \quad \text{for all } \ n \gg 0.
\]
It follows that the function $n \rt \dim_K M_n$ (with $n > 0$) is of polynomial type of degree $\leq m -1$.
\end{proof}

We now give
\begin{proof}[Proof of Theorem \ref{bass-growth}]
By Lemma \ref{lyu-lemma-hypoth} and Proposition \ref{lyu-lemma} we get 
that \\ $\mu_j(P, M_n) = \mu_0(P, H^j_P(M_n))$ for all $n \in \ZZ$.
We note that $(H^j_{PR}\circ \FF)(R)_n = H^j_P(M_n)$ for all $n \in \ZZ$. We note that $H^j_{PR}\circ\FF$ is a graded Lyubeznik functor on $\ ^*Mod(R)$.

Let $B = \widehat{A_P}$. Also 
 set $S = B[X_1,\ldots, X_m]$.
 We note that by \ref{std-op} the functor $G(-) = B \otimes_A H^j_{PR}\circ\FF(-)$ is a graded Lyubeznik functor on $ \ ^* Mod(S)$.
 
 By Cohen-structure theorem $B = K[[Y_1,\ldots, Y_g]]$ where $K = \kappa(P)$ the residue field of $A_P$  and $g = \height_A P$.
 Let $\Lambda $ be the ring of $K$-linear differential operators on $B$ and set $\D = A_m(\Lambda)$ the $m^{th}$-Weyl algebra
 over $\Lambda$. Then by Theorem \ref{gen-eul-Lyu} and Theorem \ref{mom} we get that $L = G(R)$ is a graded holonomic, generalized Eulerian
 $\D$-module. Notice  by \ref{min-loc} we get that 
 $L_n = (H^j_P(M_n))_P $ for all $n \in \ZZ$.
 
 Note that either $(H^j_P(M_n))_P  = 0$ OR
  $P$ is the minimal prime of $(H^j_P(M_n))$. Thus we get that $L_n$ is supported  ONLY at the maximal ideal of $B$ for all $n \in \ZZ$. By \ref{support-m} we get that $L_n = E_B(K)^{\alpha_n}$ where 
 $E_B(K)$ is the injective hull of $K$ as a $B$-module (and $\alpha_n$ some ordinal possibly infinite).  By \ref{key-remark} we get that
$V = H_d(\bY, L)$ is a graded holonomic generalized Eulerian $A_m(K)$-module with $V_n = H_d(\bY, L)_n = K^{\alpha_n}$ for all $n \in \ZZ$.
We note that
  that $ \alpha_n = \mu_j(P, M_n)  < \infty$ for all $n \in \ZZ$.
  
  The result now follows from Theorem \ref{GB-hol}.
\end{proof}
\s \label{hilb} Before proving Theorem \ref{bass-growth-max} we need the following preliminaries. 
\begin{enumerate}
\item
Let $T = K[X_1,\ldots, X_m] = \bigoplus_{n \geq 0} T_n$. Then it is well-known that
\[
\dim_K T_n = \binom{n + m -1}{m-1} \quad \text{for all} \ n \geq 0.
\]
\item
Let $\n = (X_1,\ldots, X_m)$. Then $H^m_\n(T) = E(m)$ where $E$ is the $*$-injective hull of $\n$ as a $R$-module. Using for instance the Grothendieck-Serre formula \cite[4.4.3]{BH} we get
\[
\dim_K H^m_\n(T)_n = (-1)^{m-1} \binom{n + m -1}{m-1} \quad \text{for all} \ n \leq -m.
\]
\end{enumerate}

The following result is an essential ingredient in proving Theorem \ref{bass-growth-max}.
\begin{proposition} \label{msudan}
Let $M = \bigoplus_{n \in \ZZ}M_n$ be a graded holonomic generalized Eulerian $A_m(K)$-module. Suppose $M_c = 0$ for some $c$ but $M \neq 0$.
Then one of the following holds:
\begin{enumerate}[\rm(1)]
\item
$\dim_K M_0 $ is finite and non-zero, 
$$\dim_K M_n = \dim_K M_0 \cdot \binom{n + m -1}{m-1} \quad \text{for all $n \geq 0$}$$
 and $M_n = 0$ for all $n < 0$.
\item
$\dim_K M_{-m} $ is finite and non-zero, 
$$\dim_K M_n = \dim_K M_{-m} \cdot (-1)^{m-1}\binom{n + m -1}{m-1} \quad \text{ for all $n \leq -m$} $$ and $M_n = 0$ for all $n > -m$.
\end{enumerate}  
\end{proposition}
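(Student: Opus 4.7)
The strategy naturally splits into two stages: use the rigidity theorem to decompose $M$ into "positive-degree" and "negative-degree" summands, then handle each by induction on $m$ via Koszul exact sequences. The decomposition uses the contrapositive of Theorem \ref{rig-tame-suprise-dim-0}: since some $M_c = 0$ and $m \geq 2$, we have $M_r = 0$ for every $r$ with $-m < r < 0$. (The $m = 1$ case is vacuous.) This gap lets one split $M = M^{+} \oplus M^{-}$ as graded $A_m(K)$-modules, where $M^{+} := \bigoplus_{n \geq 0} M_n$ and $M^{-} := \bigoplus_{n \leq -m} M_n$: indeed $\partial_i(M_0) \subseteq M_{-1} = 0$ and $X_i(M_{-m}) \subseteq M_{-m+1} = 0$, so each summand is closed under the Weyl algebra action. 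By Property \ref{property1} both remain generalized Eulerian, and both are holonomic as submodules. It therefore suffices to prove (1) when $M = M^{+}$ and (2) when $M = M^{-}$.

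For (1), I would induct on $m$. The base case $m = 1$ follows from Lemma \ref{sushil}(3)(b), which says $H_l(X_1, M)$ is concentrated in degree $0$; the Koszul sequence $0 \to H_1(X_1, M)_n \to M_{n-1} \to M_n \to H_0(X_1, M)_n \to 0$ then forces $M_n \cong M_{n-1}$ for $n \geq 1$, matching $\binom{n}{0} = 1$. For $m \geq 2$, apply the Koszul sequence with $X_m$:
\[
0 \to H_1(X_m, M)_n \to M_{n-1} \xrightarrow{X_m} M_n \to H_0(X_m, M)_n \to 0.
\]
By Lemma \ref{sushil}(2), each $H_l(X_m, M)$ is a generalized Eulerian holonomic $A_{m-1}(K)$-module. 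Since $M_{-1} = 0$, we have $H_1(X_m, M)_0 = 0$; a generalized Eulerian holonomic $A_{m-1}(K)$-module supported in degrees $\geq 1$ with vanishing degree-zero piece satisfies neither case of the induction hypothesis, so $H_1(X_m, M) = 0$. Thus $X_m$ is injective on $M$, and the induction hypothesis applied to $H_0(X_m, M)$ (whose degree-zero piece is $M_0$) yields $\dim_K H_0(X_m, M)_n = \dim_K M_0 \cdot \binom{n+m-2}{m-2}$. The resulting recurrence telescopes by the hockey-stick identity $\sum_{k=0}^n \binom{k+m-2}{m-2} = \binom{n+m-1}{m-1}$ to give the desired formula.

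The argument for (2) is symmetric, using Koszul with $\partial_m$ and Lemma \ref{sushil}(1), which produces the \emph{shifted} module $H_l(\partial_m, M)(-1)$ as generalized Eulerian over $A_{m-1}(K)$. I expect this shift, together with the sign $(-1)^{m-1}$, to be the main bookkeeping obstacle. The argument runs along the same lines: $\partial_m$ is forced to be injective by applying the induction hypothesis to $H_1(\partial_m, M)(-1)$, and the induction hypothesis (case (2) at level $m-1$) applied to $H_0(\partial_m, M)(-1)$ gives $\dim_K H_0(\partial_m, M)_n = \dim_K M_{-m}\cdot (-1)^{m-2}\binom{n+m-1}{m-2}$ for $n \leq -m$. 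Telescoping via the identity $\sum_{k=n}^{-m-1}\binom{k+m-1}{m-2} = (-1)^{m-1} - \binom{n+m-1}{m-1}$ produces the signed formula $\dim_K M_n = \dim_K M_{-m}\cdot (-1)^{m-1}\binom{n+m-1}{m-1}$. The sign $(-1)^{m-1}$ ultimately reflects that the universal model for (2) is the local cohomology module $H^m_{(X_1, \ldots, X_m)}(K[X_1, \ldots, X_m])$, whose Hilbert function carries exactly this sign.
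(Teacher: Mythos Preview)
Your argument is essentially correct and takes a genuinely different route from the paper. The paper does not induct on $m$ at all: after invoking rigidity to place $M$ in one of the two cases, it constructs an explicit isomorphism with a known module. In case (1), a $K$-basis $u_1,\ldots,u_p$ of $M_0$ gives a $D$-linear map $D^p \to M$ (where $D = A_m(K)$) which factors through $T^p = K[X_1,\ldots,X_m]^p$ since $\partial_i M_0 \subseteq M_{-1} = 0$; the kernel and cokernel are generalized Eulerian holonomic and vanish in all degrees $\leq 0$ by construction, so Theorem~\ref{rig-tame-dim-0} forces both to be zero, yielding $M \cong T^p$. Case (2) proceeds symmetrically with $E(m)^q$ in place of $T^p$, where $E = D/D(X_1,\ldots,X_m)$. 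This is shorter than your Koszul induction and delivers the stronger conclusion that $M$ is \emph{isomorphic} to an explicit module, from which the Hilbert function is read off via~\ref{hilb}. Your approach, by contrast, stays at the level of dimension counts and binomial identities; it is more hands-on but avoids constructing the isomorphism.

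Two technical remarks. First, your decomposition $M = M^{+} \oplus M^{-}$ is valid, but neither you nor the paper actually excludes both summands being nonzero simultaneously (consider $M = T \oplus E(m)$), so the literal ``one of'' in the statement is not fully established by either argument; this is harmless for the application to Theorem~\ref{bass-growth-max}. Second, your step showing $H_1(X_m, M) = 0$ invokes the induction hypothesis, but it is slightly cleaner (and avoids any circularity through the preceding point) to appeal directly to Theorem~\ref{rig-tame-dim-0}(II): a generalized Eulerian holonomic $A_{m-1}(K)$-module $N$ supported in degrees $\geq 1$ has $N_0 = 0$, hence $N_n = 0$ for all $n \geq 0$, hence $N = 0$.
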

\begin{proof}
We first note by  Proposition \ref{bass-dim0} we get that $\dim_K M_n < \infty$ for all $n \in \ZZ$. By Theorem \ref{rig-tame-dim-0} and Theorem \ref{rig-tame-suprise-dim-0} we also get that we have one of the following two cases:
\begin{enumerate}[\rm (i)]
\item
$M_n \neq 0$ for all $n \geq 0$ and $M_n = 0$ for all $n < 0$.
\item
$M_n \neq 0$ for all $n \leq -m$ and $M_n = 0$ for all $n > -m$.
\end{enumerate}
Thus all we have to show is the result regarding dimensions of graded components of $M$. For convenience set $D = A_m(K)$. Also we consider $T = K[X_1,\ldots, X_m]$ with its unique maximal homogeneous  ideal $\n = (X_1,\ldots, X_m)$. Furthermore 
let $E = D/D\n$ be the injective hull of $K = T/\n$ as a $T$-module. We note that
$E(m)$ is an Eulerian $D$-module.

(1) Let $p = \dim_K M_0$ and let $u_1,\ldots, u_p$ be a $K$-basis of $M_0$. We consider the map $\psi \colon D^p \rt M$ which maps $e_i$ to $u_i$. Notice that
$\psi ((D\partial)^p) = 0$. So $\psi$ factors through a map $\ov{\psi} \colon T^p \rt M$. Thus we have an exact sequence of generalized Eulerian holonomic $D$-modules
\[
0 \rt K \rt T^p \xrightarrow{\ov{\psi}} M \rt C \rt 0 \quad \text{where} \ K = \ker \ov{\psi}, \ \text{and} \ C = \coker \ov{\psi}.
\]
We note that by construction, $K_j = C_j = 0$ for $j < 1$. By Theorem \ref{rig-tame-dim-0} it follows that $K = C = 0$. Thus $M = T^p$. The result now follows from \ref{hilb}(1).

(2) Let $q = \dim_K M_{-m}$ and let $v_1,\ldots, v_q$ be a $K$-basis of $M_{-m}$. We consider the map $\phi \colon D^q \rt M$ which maps $e_i$ to $v_i$. Notice that
$\phi ((D\n)^q) = 0$. So $\phi$ factors through a map $\ov{\phi} \colon E(m)^q \rt M$. Thus we have an exact sequence of generalized Eulerian holonomic $D$-modules
\[
0 \rt K^\prime \rt E(m)^q \xrightarrow{\ov{\phi}} M \rt C^\prime \rt 0 \quad \text{where} \ K^\prime = \ker \ov{\phi}, \ \text{and} \ C^\prime = \coker \ov{\phi}.
\]
We note that by construction, $K^\prime_j = C^\prime_j = 0$ for $j > -m - 1$. By Theorem \ref{rig-tame-dim-0} it follows that $K^\prime = C^\prime = 0$. Thus $M = E(m)^q$. The result now follows from \ref{hilb}(2).
\end{proof}

We now give
\begin{proof}[Proof of Theorem \ref{bass-growth-max}]
The proof proceeds on the same lines as proof of Theorem \ref{bass-growth}. We only note that in this case we have
$V = H_d(\bY, L) \neq 0$  with $V_c = 0$ for for some $c$. 
The result now follows from Proposition \ref{msudan}.
\end{proof}

\section{Associate Primes}
In this section we prove Theorem \ref{ass}.
 To prove this theorem we need to generalize  an exercise problem from 
Matsumura's  classic text \cite[Exercise 6.7]{Mat}.
\begin{proposition}\label{M-ex}
Let $f \colon A \rt B$ be a homomorphism of Noetherian rings. Let $M$ be an $B$-module. Then 
\[
\Ass_A M  = \{ P\cap A \mid P \in \Ass_B M \}.
\]
In particular if $\Ass_B M$ is a finite set then so is $\Ass_A M$.
\end{proposition}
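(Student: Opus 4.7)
The plan is to prove both containments separately, using the Noetherianness of $B$ in an essential way. For the easy direction $\{P \cap A \mid P \in \Ass_B M\} \subseteq \Ass_A M$, I would pick $P = \ann_B(m) \in \Ass_B M$ and observe that for $a \in A$ the relation $am = 0$ in $M$ (with $a$ acting via $f$) is equivalent to $f(a) \in P$, so that $\ann_A(m) = f^{-1}(P) = P \cap A$. Since the contraction of a prime is prime, this already shows $P \cap A \in \Ass_A M$.

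For the reverse inclusion $\Ass_A M \subseteq \{P \cap A \mid P \in \Ass_B M\}$, fix $\mathfrak{p} \in \Ass_A M$ with $\mathfrak{p} = \ann_A(m)$ and set $J = \ann_B(m)$. The same contraction calculation gives $J \cap A = \mathfrak{p}$, and the cyclic $B$-submodule $Bm \subseteq M$ is $B$-isomorphic to $B/J$. Since $B$ is Noetherian, $B/J$ has only finitely many minimal primes; writing these as $Q_1/J, \ldots, Q_s/J$, each $Q_i$ lies in $\Ass_B(B/J) = \Ass_B(Bm) \subseteq \Ass_B M$, and each contraction $Q_i \cap A$ is a prime of $A$ containing $\mathfrak{p}$. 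The main obstacle, and the crux of the argument, is to show that at least one $Q_i$ contracts \emph{exactly} to $\mathfrak{p}$ rather than to a strictly larger prime.

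I would dispatch this obstacle by contradiction. Suppose $Q_i \cap A \supsetneq \mathfrak{p}$ for every $i$, and choose $a_i \in (Q_i \cap A) \setminus \mathfrak{p}$. Then $\prod_{i=1}^{s} a_i \in \bigcap_i Q_i = \sqrt{J}$, so $\left(\prod_i a_i\right)^N \in J$ for some $N \geq 1$. Intersecting with $A$ and using $J \cap A = \mathfrak{p}$ yields $\left(\prod_i a_i\right)^N \in \mathfrak{p}$, and primality of $\mathfrak{p}$ forces some $a_i \in \mathfrak{p}$, contradicting the choice of $a_i$. Hence at least one $Q_i$ satisfies $Q_i \cap A = \mathfrak{p}$, as required. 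The final ``in particular'' statement is then immediate, since $\Ass_A M$ is exhibited as the image of $\Ass_B M$ under the contraction map $P \mapsto P \cap A$, and this image is finite whenever $\Ass_B M$ is.
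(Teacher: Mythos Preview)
Your proof is correct. Both you and the paper make the same opening move for the hard inclusion: pick a witness $m$ with $\mathfrak{p} = \ann_A(m)$ and pass to the cyclic $B$-submodule $Bm \cong B/J$ with $J = \ann_B(m)$. The paper then simply invokes Matsumura's exercise (the statement for finitely generated $B$-modules) as a black box to produce $P \in \Ass_B(Bm)$ with $P \cap A = \mathfrak{p}$. You instead supply an explicit argument for the cyclic case: take the minimal primes $Q_1,\ldots,Q_s$ over $J$, note they lie in $\Ass_B(Bm) \subseteq \Ass_B M$, and use the identity $\bigcap_i Q_i = \sqrt{J}$ together with $J \cap A = \mathfrak{p}$ to force one of them to contract exactly to $\mathfrak{p}$. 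Your route is more self-contained and makes the role of Noetherianness of $B$ (finitely many minimal primes, $\sqrt{J}$ equals their intersection) completely transparent; the paper's route is shorter but presupposes the finitely generated case.
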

\begin{remark}
Matsumura's exercise is to prove the above result for finitely generated $B$-modules. We assume that the reader has done this exercise.  We also believe that Proposition \ref{M-ex} is known to the experts.  We give a proof for reader's convenience.

\end{remark}
\begin{proof}[Proof of Proposition \ref{M-ex}]
It can be easily proved that
\[
\{ P\cap A \mid P \in \Ass_B M \} \subseteq \Ass_A M.
\]
Let $\q \in \Ass_A M$. So $\q = (0 \colon t)$ for some non-zero $t \in M$. Let $N = Bt$. Note that $N$ is a finitely generated $B$-module and $\q \in \Ass_A N$. As the assertion of the Proposition is true for finitely generated $B$-modules, there exists
$P \in \Ass_B N$ with $P\cap A = \q$. As $N \subseteq M$ we also have $P \in \Ass_B M$. 
\end{proof}

\s We note that $P \in \Ass_A V$ if and only if $\mu_0(P, V) > 0$.
 We now state and prove a result which implies Theorem \ref{ass}.
 \begin{theorem}\label{ass-gen}
 (with hypotheses as in \ref{std}). 
  Further assume that either $A$ is local or a smooth affine algebra over a field $K$ of characteristic zero. Let  $\FF$ be a graded Lyubeznik functor of $ \ ^* Mod(R)$. Set $M = \FF(R) = \bigoplus_{n \in \ZZ}M_n$. Then
\begin{enumerate}[\rm (1)]
\item
$\bigcup_{n \in \ZZ} \Ass_A M_n   $ is a finite set.
\item
$\Ass_A M_n = \Ass_A M_m$ for all $n \leq -m$.
\item
$\Ass_A M_n = \Ass_A M_0$ for all $n \geq 0$.
\end{enumerate}
\end{theorem}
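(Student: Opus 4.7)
The plan is to deduce parts (2) and (3) directly from Theorem \ref{bass-basic-proof} and to reduce part (1) to the finiteness of $\Ass_R M$, which then follows from Lyubeznik's finiteness theorem, applied directly to $R$ in the smooth affine case and to a suitable completion in the local case.

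Since $P \in \Ass_A V$ iff $\mu_0(P, V) > 0$, applying Theorem \ref{bass-basic-proof} with $j = 0$ to a fixed prime $P$ of $A$ shows that the set $\{ n \in \ZZ : P \in \Ass_A M_n \}$ is one of: empty (case (ii)(a)); all of $\ZZ$ (cases (i) and (ii)(b)); $\{n \geq 0\}$ (case (ii)(c)); $\{n \leq -m\}$ (case (ii)(d)); or $\{n \leq -m\}\cup\{n \geq 0\}$ (case (ii)(e)). In every case this set is constant on $\{n \leq -m\}$ and on $\{n \geq 0\}$, which at once gives parts (2) and (3). Moreover only cases (i) and (ii)(b) admit $P \in \Ass_A M_r$ for some $-m < r < 0$, and in both of them $P \in \Ass_A M_{-m} \cap \Ass_A M_0$; consequently $\bigcup_{n \in \ZZ}\Ass_A M_n = \Ass_A M_{-m} \cup \Ass_A M_0$.

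For (1), because $M$ is a graded $R$-module with $A$ concentrated in degree zero, every associated prime of $M$ as an $A$-module is the annihilator of a homogeneous element, so $\Ass_A M = \bigcup_n \Ass_A M_n$. Proposition \ref{M-ex} applied to $A \hookrightarrow R$ gives $\Ass_A M = \{Q \cap A : Q \in \Ass_R M\}$, and it is therefore enough to show that $\Ass_R M$ is finite. When $A$ is smooth affine over $K$, $R = A[X_1,\ldots,X_m]$ is also smooth affine over $K$ and Lyubeznik's finiteness theorem \cite{Lyu-1} yields $\Ass_R \FF(R)$ finite. When $A$ is regular local with residue field $K'$, I complete $R$ at the graded maximal ideal $\mathcal{M} = (\m, R_+)$: by Cohen's structure theorem the completion $\widetilde{R}$ is isomorphic to the formal power series ring $K'[[Y_1,\ldots,Y_d, X_1,\ldots,X_m]]$, where $d = \dim A$. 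Proposition \ref{flat-L} applied to the flat map $R \rt \widetilde{R}$ produces a Lyubeznik functor $\widetilde{\FF}$ on $Mod(\widetilde{R})$ with $\widetilde{\FF}(\widetilde{R}) \cong M \otimes_R \widetilde{R}$; Lyubeznik's finiteness theorem for formal power series rings \cite{Lyu-1} then gives that $\Ass_{\widetilde{R}}\widetilde{\FF}(\widetilde{R})$ is finite, and Matsumura's flat base change formula for associated primes forces $\Ass_R M = \{Q \cap R : Q \in \Ass_{\widetilde{R}}\widetilde{\FF}(\widetilde{R})\}$, which is therefore finite.

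The principal obstacle is the local case: the paper's completion construction \ref{std-op}(b) produces only $\widehat{A}[X_1,\ldots,X_m]$, a polynomial ring over a power series ring, which does not lie in Lyubeznik's classical setting. To land in a genuine power series ring one must complete $R$ at $\mathcal{M}$ itself, and then use Proposition \ref{flat-L} (in place of \ref{std-op}(b)) to transport the Lyubeznik functor; the flat base change for associated primes is then needed to descend finiteness back to $\Spec R$.
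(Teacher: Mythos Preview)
Your argument is correct and matches the paper's approach for parts (2) and (3) (via Theorem~\ref{bass-basic-proof} with $j=0$) and for the smooth affine case of (1). The only real difference is in the local case of (1). There the paper avoids completion entirely: since $M$ is graded, its $R$-associated primes are homogeneous and hence contained in $\M$, so the map $P \mapsto P R_\M$ gives a bijection $\Ass_R M \cong \Ass_{R_\M} M_\M$; but $R_\M$ is itself a regular local ring containing a field, and Lyubeznik's finiteness theorem \cite[3.3]{Lyu-1} applies to \emph{any} such ring, not only to formal power series rings. Thus $\Ass_{R_\M}(\FF(R)_\M)$ is already known to be finite, and no descent is needed.

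Your route through the completion $\widetilde{R}$ also works, but the descent step needs a word of care. The map $R \to \widetilde{R}$ is flat but not faithfully flat, so one must first invoke the homogeneity of associated primes (exactly as above) to reduce to $R_\M \to \widetilde{R}$, which is faithfully flat; and even then what follows from $M_\M \hookrightarrow M_\M \otimes_{R_\M} \widetilde{R}$ together with Proposition~\ref{M-ex} is only the inclusion
\[
\Ass_{R_\M} M_\M \ \subseteq \ \{Q \cap R_\M : Q \in \Ass_{\widetilde{R}}(M \otimes_R \widetilde{R})\},
\]
rather than the equality you assert. The inclusion is of course all that is required for finiteness, so your conclusion stands.
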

 \begin{proof}
 We first show that under our assumptions $\Ass_R \FF(R)$ is finite.
 
 If $A$ is a smooth affine algebra over $K$ then so is $R = A[X_1,\ldots, X_m]$. In this case if $G$ is \emph{any} Lyubeznik functor on $Mod(R)$ (not necessarily graded) then $\Ass_R G(R)$ is finite, see \cite[3.7]{Lyu-1}.
 
 Now assume $A$ is local with maximal ideal $\n$. Let $\M = (\n, X_1,\ldots, X_m)$ be the maximal homogeneous ideal of $R$. As $\FF(R)$ is a graded $R$-module all its associate primes are homogeneous (see \cite[1.5.6]{BH}) and  so are contained in $\M$. Thus we have an isomorphism $\Ass_R(\FF(R)) \rt \Ass_{R_\M}(\FF(R)_\M)$. But
 $\FF(R)_{R_\M} = G(R_\M)$ for a Lyubeznik functor $G$ on $Mod(\R_\M)$, see \ref{flat-L}. However $R_\M$ is regular local. Thus the result follows from \cite[3.3]{Lyu-1}.
 
 Thus we have proved that under our assumptions $\Ass_R \FF(R)$ is finite.
 
 (1) This follows from Proposition \ref{M-ex}.
 
 For (2), (3) let 
 \[
 \bigcup_{n \in \ZZ} \Ass_A M_n  = \{ P_1, \ldots, P_l \}.
 \]
 
 (2) Let $P = P_i$ for some $i$. 
 Let $r \leq -m$. Then by Theorem  \ref{bass-basic} it follows that $\mu(P, M_r) > 0$ if and only if $\mu(P, M_{-m}) > 0$.   The result follows.
 
 (3)Let $P = P_i$ for some $i$. 
 Let $s \geq 0$. Then by Theorem  \ref{bass-basic} it follows that $\mu(P, M_s) > 0$ if and only if $\mu(P, M_{0}) > 0$.   The result follows.
 \end{proof}
 
 We now state a more general result which  essentially has the same proof as of Theorem \ref{ass-gen}. First we make the following notation: Let $\m$ be a maximal ideal of $A$. If $N$ is an $A$-module, set
 \[
 \Ass_A^\m(N) = \{ P \in \Ass_A N \mid P \subseteq \m \}.
 \]
 \begin{theorem} \label{ass-gen2}
 (with hypotheses as in \ref{std}). 
Let $\m$ be a maximal ideal in $A$  
  Let  $\FF$ be a graded Lyubeznik functor of $ \ ^* Mod(R)$. Set $M = \FF(R) = \bigoplus_{n \in \ZZ}M_n$. Then
\begin{enumerate}[\rm (1)]
\item
$\bigcup_{n \in \ZZ} \Ass_A^\m M_n   $ is a finite set.
\item
$\Ass_A^\m M_n = \Ass_A^\m M_m$ for all $n \leq -m$.
\item
$\Ass_A^\m M_n = \Ass_A^\m M_0$ for all $n \geq 0$. \qed
\end{enumerate}
 \end{theorem}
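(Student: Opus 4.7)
The plan is to reduce Theorem \ref{ass-gen2} to the already-proved local case of Theorem \ref{ass-gen} by localizing at the maximal ideal $\m$. The key elementary fact is that if $N$ is any $A$-module and $P \subseteq \m$ is a prime ideal, then $P \in \Ass_A N$ if and only if $PA_\m \in \Ass_{A_\m} N_\m$. Consequently the assignment $P \mapsto PA_\m$ yields a bijection
\[
\Ass_A^\m(N) \;\longleftrightarrow\; \Ass_{A_\m}(N_\m).
\]

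First I would set up the localized situation. The ring $A_\m$ is a regular local ring containing a field of characteristic zero. Set $R_\m = A_\m[X_1,\ldots,X_m] = R \otimes_A A_\m$, with the induced standard grading. By \ref{std-op}(a), the functor $\widehat{\FF} := \FF \otimes_A A_\m$ is a graded Lyubeznik functor on $\ ^*Mod(R_\m)$, and
\[
\widehat{\FF}(R_\m) \;=\; \FF(R) \otimes_A A_\m \;=\; \bigoplus_{n \in \ZZ} (M_n)_\m,
\]
so the $n$-th graded component of $\widehat{\FF}(R_\m)$ is exactly $(M_n)_\m$.

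Now I would apply Theorem \ref{ass-gen} to the local ring $A_\m$ and the graded Lyubeznik functor $\widehat{\FF}$. This yields
\begin{enumerate}[\rm (i)]
\item $\bigcup_{n \in \ZZ} \Ass_{A_\m} (M_n)_\m$ is finite,
\item $\Ass_{A_\m} (M_n)_\m = \Ass_{A_\m} (M_{-m})_\m$ for all $n \leq -m$,
\item $\Ass_{A_\m} (M_n)_\m = \Ass_{A_\m} (M_0)_\m$ for all $n \geq 0$.
\end{enumerate}
Transporting (i)--(iii) through the bijection $\Ass_A^\m(M_n) \leftrightarrow \Ass_{A_\m}(M_n)_\m$ immediately gives the three assertions of Theorem \ref{ass-gen2}.

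I do not expect any genuine obstacle: the entire content is already carried by Theorem \ref{ass-gen}, and the translation between $\Ass_A^\m$ and $\Ass_{A_\m}(-)_\m$ is a classical fact. The only small point to be careful about is to verify, from \ref{std-op}(a), that localization at the multiplicative set $A \setminus \m$ does indeed send graded Lyubeznik functors to graded Lyubeznik functors and commutes with the evaluation on $R$; this is exactly the content of \ref{std-op}(a) applied with $W = A \setminus \m$.
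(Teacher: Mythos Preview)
Your proposal is correct. The paper does not give an explicit proof of Theorem~\ref{ass-gen2}; it simply remarks that the result ``essentially has the same proof as of Theorem~\ref{ass-gen}'' and places a \qed\ after the statement. Your reduction---localize at $\m$, invoke \ref{std-op}(a) to see that $\FF\otimes_A A_\m$ is a graded Lyubeznik functor on $\ ^*Mod(A_\m[X_1,\ldots,X_m])$, and then apply the local case of Theorem~\ref{ass-gen} together with the standard bijection $\Ass_A^\m(N)\leftrightarrow\Ass_{A_\m}(N_\m)$---is exactly the natural way to make the paper's remark precise, and arguably cleaner than literally re-running the argument of Theorem~\ref{ass-gen}.
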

\section{Dimension of Support and injective dimension}
\s \label{set-final} \emph{Setup} For the convenience of the reader we recall our basic assumptions. $A$ will denote a regular ring containing a field of characteristic zero. Let $R = A[X_1,\ldots, X_m]$ be standard graded with $\deg A = 0$ and $\deg X_i = 1$ for all $i$. We also assume $m \geq 1$. Let $\FF(-)$ be a graded Lyubeznik functor on $ \ ^* Mod(R)$.  Set $M =  \FF(R) = \bigoplus_{n \in \ZZ}M_n$.

Let $E$ be an $A$-module. Let
$\injdim_A E$ denotes the injective dimension of $E$. Also 
$\Supp_A E = \{ P \mid  E_P \neq 0 \ \text{and $P$ is a prime in $A$}\}$ is the support of an $A$-module $E$. 
By $\dim_A E $ we mean the dimension of $\Supp_A E$ as a subspace of $\Spec(A)$.

We first show
\begin{lemma}\label{injdim-dim}
(with hypotheses as in \ref{set-final}). Let $c \in \ZZ$. Then
\[
\injdim M_c \leq \dim M_c.
\]
\end{lemma}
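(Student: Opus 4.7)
The plan is to combine the injectivity result \ref{lyu-lemma-hypoth} with Lyubeznik's Bass number formula \ref{lyu-lemma} and Grothendieck vanishing.  Recall that the injective dimension of any $A$-module $N$ is characterised by its Bass numbers:
\[
\injdim_A N = \sup\{\, j \geq 0 \mid \mu_j(P, N) \neq 0 \ \text{for some prime } P \subset A \,\}.
\]
So it suffices to prove $\mu_j(P, M_c) = 0$ whenever $j > \dim_A M_c$, for every prime $P$ of $A$.

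By Proposition \ref{lyu-lemma-hypoth}, the $A_P$-module $(H^j_P(M_c))_P$ is injective for every $j \geq 0$, so by Lemma \ref{lyu-lemma},
\[
\mu_j(P, M_c) = \mu_0(P, H^j_P(M_c)) = \dim_{k(P)} \Hom_{A_P}(k(P), (H^j_P(M_c))_P).
\]
In particular $\mu_j(P, M_c) = 0$ as soon as $(H^j_P(M_c))_P$ vanishes.  Since local cohomology commutes with localisation, $(H^j_P(M_c))_P \cong H^j_{PA_P}((M_c)_P)$, and Grothendieck's vanishing theorem \cite[6.1.2]{BS} gives $H^j_{PA_P}((M_c)_P) = 0$ for all $j > \dim_{A_P}(M_c)_P$.

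The remaining step, which is really the only thing to check by hand, is the inequality $\dim_{A_P}(M_c)_P \leq \dim_A M_c$: if $Q \in \Supp_A M_c$ with $Q \subseteq P$, then $\dim A_P/QA_P = \height(P/Q) \leq \dim A/Q \leq \dim_A M_c$; taking the supremum over such $Q$ yields the claim.  Putting it together, for $j > \dim_A M_c$ we get $(H^j_P(M_c))_P = 0$, hence $\mu_j(P, M_c) = 0$ for every prime $P$, which gives $\injdim_A M_c \leq \dim_A M_c$.

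The only potential obstacle is a foundational one: ensuring that the Bass-number characterisation of $\injdim$ and the commutation of $H^j_{P}(-)$ with localisation at $P$ both apply to the (generally non-finitely generated) $A$-module $M_c$.  Both facts hold without finite generation hypotheses, so the argument goes through as sketched.
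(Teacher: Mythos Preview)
Your argument is correct and follows the same route as the paper: invoke Proposition \ref{lyu-lemma-hypoth} together with Lemma \ref{lyu-lemma} to identify $\mu_j(P,M_c)$ with $\mu_0(P,H^j_P(M_c))$, and then kill the right-hand side via Grothendieck vanishing. The only difference is that you localise at $P$ before applying Grothendieck vanishing and then need the auxiliary inequality $\dim_{A_P}(M_c)_P \leq \dim_A M_c$, whereas the paper applies Grothendieck vanishing \cite[6.1.2]{BS} directly to $H^j_P(M_c)$ over $A$ (it already vanishes for $j > \dim_A M_c$), making the localisation step and the dimension comparison unnecessary.
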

\begin{proof}
Let $P$ be a prime ideal in $A$. Then by Proposition \ref{lyu-lemma-hypoth} and Lemma \ref{lyu-lemma} we get
\[
\mu_j(P, M_c) = \mu(P, H^j_P(M_c)).
\]
By Grothendieck vanishing theorem $H^j_P(M_c) = 0$ for all $j > \dim M_c$, see \cite[6.1.2]{BS}.  So $\mu_j(P, M_c) = 0$ for all $j > \dim M_c$. The result follows.
\end{proof}
We now state and prove a result which implies Theorem \ref{injdim-and-dim}.
\begin{theorem}\label{injdim-and-dim-gen}(with hypotheses as in \ref{set-final}). Let $M = \FF(R) = \bigoplus_{n \in \ZZ}M_n$. Then we have
\begin{enumerate}[\rm (1)]
\item
$\injdim M_c \leq \dim M_c$ for all $c \in \ZZ$.
\item
$\injdim M_n = \injdim M_{-m}$ for all $n \leq -m$.
\item
$\dim M_n = \dim M_{-m}$ for all $n \leq -m$.
\item
$\injdim M_n = \injdim M_{0}$ for all $n \geq 0$.
\item
$\dim M_n = \dim M_{0}$ for all $n \geq 0$.
 \item
 If $m \geq 2$ and $-m < r,s < 0$ then
 \begin{enumerate}[\rm (a)]
 \item
 $\injdim M_r = \injdim M_{s}$  and $\dim M_r = \dim M_{s}$.
 \item
 $\injdim M_r \leq \min \{ \injdim M_{-m}, \injdim M_{0} \}$.
 \item
 $\dim M_r \leq \min \{ \dim M_{-m}, \dim M_{0} \}$.
 \end{enumerate}
\end{enumerate}
\end{theorem}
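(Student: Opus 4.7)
The plan is to reduce all the inequalities and equalities in (2)--(6) to one of two rigidity results already in hand: Theorem \ref{bass-basic} for the injective-dimension statements, and Theorem \ref{tame-rigid-general} for the support/dimension statements. Part (1) is already done as Lemma \ref{injdim-dim}.

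For the injective-dimension assertions (2), (4), the injdim-half of (6)(a), and (6)(b), I would begin by invoking the standard characterization
\[
\injdim_A E = \sup\{\, j \geq 0 : \mu_j(P,E) > 0 \text{ for some prime } P \subset A\,\},
\]
valid for any $A$-module $E$ via the decomposition $I^j = \bigoplus_P E_A(A/P)^{\mu_j(P,E)}$ in its minimal injective resolution. Fixing $P$ and $j$, I would then consult Theorem \ref{bass-basic}: the function $n \mapsto \mu_j(P, M_n)$ belongs to exactly one of the five patterns (ii)(a)--(e), or is identically infinite in case (i). A direct inspection shows that in every case the truth-value of $\mu_j(P, M_n) > 0$ is the same for all $n \leq -m$ (and coincides with its value at $n = -m$), is the same for all $n \geq 0$ (and coincides with its value at $n = 0$), and is the same for all $-m < n < 0$; moreover in the latter range, a positive Bass number forces case (i) or (ii)(b), which yields positivity at both $n = -m$ and $n = 0$. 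Taking suprema in $(P, j)$ yields (2), (4), the injdim-half of (6)(a), and (6)(b).

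For the support/dimension assertions (3), (5), the dim-half of (6)(a), and (6)(c), I would use faithful flatness of $B = \widehat{A_P}$ over $A_P$ to write
\[
P \in \Supp_A M_n \iff (M_n)_P \neq 0 \iff B \otimes_A M_n \neq 0.
\]
Setting $S = B[X_1,\ldots,X_m]$ and $G(-) = B \otimes_A \FF(-)$, by \ref{std-op}(b) $G$ is a graded Lyubeznik functor on $\ ^*Mod(S)$ and $G(S)_n = B \otimes_A M_n$. Since $B$ is a regular local ring containing $\kappa(P)$, a field of characteristic zero, Theorem \ref{tame-rigid-general} applies to $G(S)$. Its three parts translate, after letting $P$ vary, into $\Supp_A M_n = \Supp_A M_{-m}$ for all $n \leq -m$, $\Supp_A M_n = \Supp_A M_0$ for all $n \geq 0$, and (when $m \geq 2$) $\Supp_A M_r = \Supp_A M_s \subseteq \Supp_A M_{-m} \cap \Supp_A M_0$ for any $-m < r, s < 0$. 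Computing $\sup\{\dim A/P : P \in \Supp_A M_n\}$ on each side then delivers (3), (5), the dim-half of (6)(a), and (6)(c).

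I do not expect any genuine obstacle; the argument is pure bookkeeping on top of Theorems \ref{bass-basic} and \ref{tame-rigid-general}. The one point meriting care is the Bass-number characterization of $\injdim$ for the possibly non-finitely-generated modules $M_n$, but this follows directly from the structure of a minimal injective resolution and makes no appeal to finite generation.
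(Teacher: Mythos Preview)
Your proposal is correct and follows essentially the same approach as the paper: part (1) is Lemma \ref{injdim-dim}, the injective-dimension statements come from the Bass-number dichotomy in Theorem \ref{bass-basic}, and the dimension statements come from the rigidity result Theorem \ref{tame-rigid-general} applied after base change. The only minor difference is that for the support statements the paper stops at localization---noting that $\FF(-)_P$ is already a graded Lyubeznik functor on $\ ^*Mod(A_P[X_1,\ldots,X_m])$ with $A_P$ regular, so Theorem \ref{tame-rigid-general} applies directly---whereas you pass all the way to the completion $B = \widehat{A_P}$; your extra step is harmless but unnecessary.
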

\begin{proof}
(1) This follows from Lemma \ref{injdim-dim}.

For (2), (3) let $P$ be a prime ideal in $A$.  Let $c \leq -m$.

(2) Fix $j \geq 0$. By
Theorem \ref{bass-basic-proof} we get that $\mu_j(P, M_c)> 0$ if and only if 
$\mu_j(P, M_{-m})> 0$. The result follows.

(3) We note that $\FF(-)_P$ is a graded Lyubeznik functor on \\  $\ ^* Mod(A_P[X_1,\ldots, X_m])$. By Theorem \ref{tame-rigid-general} it follows that $(M_{-m})_P \neq 0$ if and only if $(M_c)_P \neq 0$. The result follows.

(4), (5), 6(a) follow with similar arguments as in (2), (3).

For 6(b),(c)
let $P$ be a prime ideal in $A$. 

6(b) By
Theorem \ref{bass-basic-proof} we get that if $\mu_j(P, M_r)> 0$ then 
$\mu_j(P, M_{-m})> 0$ and  $\mu_j(P, M_{0})> 0$. The result follows.

6(c) We note that $\FF(-)_P$ is a graded Lyubeznik functor on \\  $\ ^* Mod(A_P[X_1,\ldots, X_m])$. By Theorem \ref{tame-rigid-general} it follows that $(M_{r})_P \neq 0$  implies \\ $(M_{-m})_P \neq 0$ and $(M_{0})_P \neq 0$ . The result follows.
\end{proof}

\end{document}